\newcommand{\RR}{\mathbb{R}}
\newcommand{\CC}{\mathbb{C}}
\newcommand{\NN}{\mathbb{N}}
\newcommand{\ESS}{\mathbb{S}}
\newcommand{\T}{\mathbb{T}}
\newcommand{\ZZ}{\mathbb{Z}}
\newcommand{\Aut}{\operatorname{Aut}}
\newcommand{\TT}{\mathcal{T}}
\newcommand{\id}{\operatorname{id}}
\newcommand{\sub}{\operatorname{sub}}
\newcommand{\clsp}{\overline{\operatorname{span}}}
\newtheorem{thm}{Theorem}
\newtheorem{lemma}[thm]{Lemma}
\newtheorem{cor}[thm]{Corollary}
\newtheorem{prop}[thm]{Proposition}
\theoremstyle{definition}
\newtheorem{example}[thm]{Example}
\newtheorem{remark}[thm]{Remark}
\numberwithin{equation}{section}
\numberwithin{thm}{section}
\title[KMS states]{\boldmath{Equilibrium states on higher-rank Toeplitz noncommutative solenoids}}
\author{Zahra Afsar}
\author{Astrid an Huef}
\author{Iain Raeburn}
\author{Aidan Sims}
\address{Zahra Afsar, School of Mathematics and Statistics, University of Sydney, NSW 2006, Australia}
\address{Astrid an Huef and Iain Raeburn, School of Mathematics and Statistics, Victoria University of Wellington, PO Box 600, Wellington 6140, New Zealand}
\address{Aidan Sims, School of Mathematics and Applied Statistics, University of Wollongong, NSW 2522, Australia}
\date{\today}
\thanks{This research was supported by the Australian Research Council and the Marsden Fund of the Royal Society of New Zealand.}
\begin{document}

\begin{abstract}
We consider a family of higher-dimensional noncommutative tori, which are twisted
analogues of the algebras of continuous functions on ordinary tori, and their Toeplitz
extensions. Just as solenoids are inverse limits of tori, our Toeplitz noncommutative
solenoids are direct limits of the Toeplitz extensions of noncommutative tori. We
consider natural dynamics on these Toeplitz algebras, and compute the equilibrium states
for these dynamics. We find a large simplex of equilibrium states at each positive
inverse temperature, parametrised by the probability measures on an (ordinary) solenoid.
\end{abstract}

\subjclass[2010]{46L05, 46L30, 46L55}
\keywords{KMS states, $C^*$-algebras, direct limit, Toeplitz algebra}

\maketitle

\section{Introduction}

Classical solenoids are inverse limits of tori. There are noncommutative analogues of
tori, which are the twisted group algebras  $C^*(\ZZ^n,\sigma)$ of the abelian group
$\ZZ^n$. For $n=2$, these are the rotation algebras $A_\theta$ generated by two unitaries
$U,V$ satisfying the commutation relation $UV=e^{2\pi i\theta}VU$. When $\theta$ is
irrational, these are simple $C^*$-algebras, and have been extensively studied (see, for
example, \cite[Chapter~VI]{D}). For $\theta=0$, we recover the commutative algebra
$C(\T^2)$, and hence the $A_\theta$ are also known as ``noncommutative tori.'' In
\cite{LP}, Latr{\'e}moli{\`e}re and Packer studied a family of noncommutative solenoids
that are direct limits of noncommutative tori. (The connection is that the commutative
algebra of continuous functions on a solenoid is the direct limit of the algebras of
continuous functions on the approximating tori.)

Following surprising results about phase transitions for the KMS states of the Toeplitz
algebras of the $ax+b$-semigroup of the natural numbers \cite{LR, LN2}, many authors have studied the KMS structure of Toeplitz extensions in other settings. Typically, these Toeplitz extensions exhibit more interesting KMS structure. This recent work has covered Toeplitz algebras of directed graphs and their higher-rank analogues \cite{aHLRS1, aHLRS,
C1, FaHR, C2} (after earlier work in \cite{EL}), Toeplitz algebras arising in number theory \cite{CDL}, the Nica-Toeplitz extensions of Cuntz-Pimsner algebras \cite{LN2, K1,K2, ABLS, ALN}, and Toeplitz algebras associated to self-similar actions \cite{LRRW, LRRW2}. In \cite{BHS}, Brownlowe, Hawkins and Sims described Toeplitz extensions of the noncommutative solenoids from \cite{LP}, and considered a natural dynamics on this extension. They showed that for each inverse temperature $\beta>0$, the KMS$_\beta$ states are parametrised by the probability measures on a commutative solenoid which is the inverse limit of $1$-dimensional tori
\cite[Theorem~6.6]{BHS}.

Here we consider a family of higher-rank noncommutative solenoids and their Toeplitz
extensions. As for the algebras of higher-rank graphs \cite{aHLRS}, there is an obvious
gauge action of a torus $\T^d$ on these algebras, but to get a dynamics one has to choose
an embedding of $\RR$ in the torus. We fix $r\in [0,\infty)^d$, giving an embedding
$t\mapsto e^{itr}$ of $\RR$ in $\T^d$, and compose with the gauge action to get a
dynamics $\alpha^r$.

The building blocks in \cite{BHS} are Toeplitz noncommutative tori in which one generator
$U$ is unitary, the other $V$ is an isometry, the relation is still given by $UV=e^{2\pi
i\theta}VU$, and the dynamics fixes $U$. Here we fix $d,k\in \NN$. Our blocks $B_\theta$
are Toeplitz noncommutative tori generated by a unitary representation $U$ of $\ZZ^d$ and
a Nica-covariant isometric representation $V$ of $\NN^k$, and the commutation relation is
given by  $U_nV_p=e^{2\pi ip^T\theta n}V_pU_n$ for a fixed $k\times d$ matrix $\theta$
with entries in $[0,\infty)$. Then the dynamics $\alpha^r$ is given by a vector $r\in
(0,\infty)^k$; it fixes the unitaries $U_n$, and multiplies $V_p$ by $e^{itp^Tr}$.

We begin  by describing the direct system of Toeplitz noncommutative tori whose limit is
the Toeplitz noncommutative solenoid of the title. Everything is defined in terms of
presentations of the blocks: building the connecting maps is in particular quite
complicated, and requires us to be careful with the notation, which we try to keep
consistent throughout the paper. We then discuss the dynamics, which is again defined
using actions on the individual blocks. Then, remarkably, we have a presentation of the
direct limit which allows us to state our main result as Theorem~\ref{newbigthm}. This
gives a satisfyingly explicit description of the KMS$_\beta$ states in terms of measures
on a commutative solenoid of the form $\varprojlim \T^d$. This concrete description is
new even in the case $k=d=1$ studied in \cite{BHS}.

The first step in the proof of our theorem is an analysis of the KMS states of a building
block $B_\theta$, which we do in \S\ref{sec:NCT-new}. The description in
Proposition~\ref{prp:KMS char} looks rather like the descriptions of KMS states on graph
algebras in \cite[Theorem~3.1]{aHLRS1} and \cite[Theorem~6.1]{aHLRS}, and on algebras
associated to local homeomorphisms in \cite[Theorem~5.1]{AaHR}: we find a subinvariance
relation which identifies the measures on the torus associated to KMS states, and then
describe the solutions of that relation in terms of a concrete simplex of measures.

In the next section (\S\ref{ctssubinv}), we show how the subinvariance relations for the
building blocks combine to give one continuously parametrised subinvariance relation for
the direct limit (Theorem~\ref{subinv}).  We then describe the solutions to this new
subinvariance relation in Theorem~\ref{thm:sub-inv}, which is the key technical result in
the paper. This solution is very concrete, involving a formula which is reminiscent of a
multi-variable Laplace transform, and is much more direct than the \emph{ad hoc} approach
used in \cite{BHS}.

In the last section, we give a concrete description of the isomorphism
$\mu\mapsto\psi_\mu$  of the simplex $P(\varprojlim \T^d)$ of probability measures on the
solenoid onto the simplex of KMS$_\beta$ states on the Toeplitz noncommutative torus.
Then by evaluating these KMS states on generators, we arrive at the explicit values
described in Theorem~\ref{newbigthm}.

\section{Toeplitz noncommutative solenoids}\label{Tncs}

We define a  Toeplitz noncommutative solenoid as the direct limit of a sequence of
blocks, which we call Toeplitz noncommutative tori. So we begin by looking at these
blocks. In the course of this section we will introduce notation which will be used
throughout the paper.

 First we fix positive integers $d$ and $k$.  We write $A^T$ for the transpose of a matrix $A$. We view elements of $\RR^k$ as column vectors, and write the inner product of $n,p\in \RR^k$ in matrix notation as $p^Tn$. We use similar conventions for $\RR^d$.

The pair $(\ZZ^k,\NN^k)$ is a quasi-lattice ordered group in the sense of Nica \cite{N}.
Indeed, for every $p,q\in \NN^k$, the element $p\vee q$ defined pointwise by
\[
(p\vee q)_j=\max\{p_j,q_j\}\quad \text{for $1\leq j\leq k$}
\]
is a least upper bound for $p$ and $q$, so it is lattice-ordered. An isometric
representation $V:\NN^k\to B(H)$ is \emph{Nica-covariant} if it satisfies
\[
V_pV_p^*V_qV_q^*=V_{p\vee q}V_{p\vee q}^*\quad\text{for all $p,q\in \NN^k$,}
\]
or equivalently \cite[(1.4)]{LR1} if
\[
V_p^*V_q=V_{(p\vee q)-p}V^*_{(p\vee q)-q} \quad\text{for all $p,q\in \NN^k$.}
\]

For $\theta\in M_{k,d}([0,\infty))$, we consider the  universal $C^*$-algebra $B_\theta$
generated by a unitary representation $U$ of $\ZZ^d$ and a Nica-covariant isometric
representation $V$ of $\NN^k$ such that
\begin{equation}\label{relation:V-U}
U_n V_p= e^{2\pi ip^T \theta n}  V_pU_n\quad\text{for $p,q\in\NN^k$ and $n\in \ZZ^d$.}
\end{equation}
We then have also
\begin{equation}\label{relation:V*-U}
U_nV^*_p= (V_pU_{-n})^* = \big(e^{-2\pi i p^T \theta (-n)} U_{-n}V_p\big)^* = e^{- 2\pi ip^T \theta n}V^*_pU_n .
\end{equation}
Direct calculation shows that for $p,q,p',q' \in \NN^k$ and $n, n' \in \ZZ^d$, we have
\begin{align*}
V_p U_n V^*_q &V_{p'} U_{n'} V^*_{q'}
    = V_p U_n V_{(q \vee p')-q} V^*_{(q \vee p')-p'} U_{n'} V^*_{q'}\\
    &= e^{2\pi i ((q \vee p'-q)^T\theta n+(q \vee p'-p')^T\theta n')}V_{p + (q \vee p') - q} U_{n+n'} V^*_{q' + (q \vee p') - p'},
\end{align*}
and we deduce that
\[
B_\theta = \clsp\{V_p U_n V^*_q : n \in \ZZ^d\text{ and } p,q \in \NN^k\}.
\]
We call $B_\theta$ a \emph{Toeplitz noncommutative torus}.

Now we move on to noncommutative solenoids. First we need some more conventions. We write
$\ESS^d$ for the compact quotient space $\RR^d/\ZZ^d$, and view functions $f\in C(\ESS^d)$
as $\ZZ^d$-periodic continuous functions $f:\RR^d\to \CC$. We write $M(\ESS^d)$ for the
set of positive measures on $\ESS^d$, and view measures $\mu\in M(\ESS^d)$ as positive
functionals $f\mapsto \int_0^1 f\,d\mu$ on $C(\ESS^d)$. Then $\|\mu\|:=\mu(\ESS^d)$ is the
norm of the corresponding functional, and $P(\ESS^d):=\{\mu\in M(\ESS^d):\|\mu\|=1\}$ is
the set of probability measures.

We consider three sequences of matrices $\{\theta_m\}\subset M_{k,d}([0,\infty))$,
$\{D_m\}\subset M_k(\NN)$, and $\{E_m\}\subset M_d(\NN)$ such that: each $D_m$ is
diagonal with entries larger than $1$; each $E_m$ has $\det E_m>1$; and
\begin{equation}\label{relatetheta}
D_m\theta_{m+1}E_m=\theta_m\quad\text{for $m\geq 1$.}
\end{equation}
We choose a sequence $\{r^m\}=\{(r^m_j)\}$ of vectors in $(0,\infty)^k$ satisfying
\begin{equation}\label{relater}
r^{m+1}=D_m^{-1}r^m \quad\text{for $m\geq 1$}
\end{equation}
Notice that both sequences are determined by the first terms $\theta_1\in
M_{k,d}([0,\infty))$ and $r^1\in [0,\infty)^k$.

\begin{example}
We fix $N\geq 2$, and set $d=k=1$, $D_m=E_m=N$ for $m\geq 2$, $\theta_1\in (0,\infty)$
and $\theta_m=N^{-2(m-1)}\theta_1$. Taking the equivalence classes of the $\theta_m$ in
$\ESS = \RR/\ZZ$ yields an example of the set-up of \cite{BHS} except that we are
insisting that $\theta_m = N^2 \theta_{m+1}$ as real numbers, not just as elements of
$\ESS$. This has the consequence that $\theta_m\to 0$ as $m \to \infty$, which need not
happen in the situation of \cite{BHS}; but see Remark~\ref{rmk:theta woes} below.
\end{example}

\begin{remark}\label{rmk:theta woes}
Our hypothesis that $D_m\theta_{m+1}E_m=\theta_m$ exactly, and not just modulo $\ZZ^d$,
seems to be crucial in our arguments. Specifically, to assemble the sequences of KMS
states that we will construct on the approximating subalgebras $B_m$ into a KMS state on
$B_\infty$, we will need to show that the associated probability measures $\nu_m$ (see
Proposition~\ref{prp:KMS char}(\ref{it:subinvariance})) intertwine through the maps
induced by the $E_m^T$. We do this in Lemma~\ref{normalising}, and we indicate there the
step in the first displayed calculation where it is critical that
$D_m\theta_{m+1}E_m=\theta_m$ exactly. This prompted us to review carefully the arguments
of \cite{BHS} and we believe that those arguments also require that $N^2 \theta_{m+1} =
\theta_m$ exactly. Specifically, the calculation at the end of the proof of
\cite[Theorem~6.9]{BHS} implicitly treats $\theta_j$ as an element of $\RR$ (there are
many solutions to $N^k \gamma = \theta_j$ in $\ESS$). Similarly the formulas in
\cite[Section~8]{BHS} that involve setting $r_j = \beta/(N^j\theta_j)$ only make sense if
$\theta_j$ is an element of $\RR$. In particular, in the final displayed calculation in
the proof of \cite[Lemma~8.1]{BHS}, it is critical that $N^2\theta_{j+1} = \theta_j$
exactly.
\end{remark}

For each $m$ there is a Toeplitz noncommutative torus $B_m:=B_{\theta_m}$ with generators
$U_{m,n}$ and $V_{m,p}$ such that: $U:n\mapsto U_{m,n}$ is a unitary representation of
$\ZZ^d$, $V:p\mapsto V_{m,p}$ is a Nica-covariant isometric representation of $\NN^k$,
and the pair $U$, $V$ satisfy the  commutation relation~\eqref{relation:V-U} for the
matrix $\theta_m$.

Next we use the matrices $D_m$ and $E_m$ to build homomorphisms from $B_m$ to $B_{m+1}$.

\begin{prop}
Suppose that $m$ is a positive integer. Then there is a homomorphism $\pi_m:B_m\to
B_{m+1}$ such that $\pi_m(U_{m,n})=U_{m+1,E_mn}$ and $\pi_m(V_{m,p})=V_{m+1,D_mp}$.
\end{prop}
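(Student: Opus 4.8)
The plan is to invoke the universal property of $B_m=B_{\theta_m}$. By construction $B_m$ is generated by a unitary representation $U_m$ of $\ZZ^d$ and a Nica-covariant isometric representation $V_m$ of $\NN^k$ satisfying~\eqref{relation:V-U} for the matrix $\theta_m$, and it is universal among $C^*$-algebras containing such a pair. So it suffices to exhibit inside $B_{m+1}$ a unitary representation $W$ of $\ZZ^d$ and a Nica-covariant isometric representation $T$ of $\NN^k$ satisfying $W_nT_p=e^{2\pi ip^T\theta_m n}T_pW_n$ for $p\in\NN^k$ and $n\in\ZZ^d$; the universal property then produces a homomorphism $\pi_m$ with $\pi_m(U_{m,n})=W_n$ and $\pi_m(V_{m,p})=T_p$, which are the asserted formulas.

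First I would set $W_n:=U_{m+1,E_mn}$ for $n\in\ZZ^d$ and $T_p:=V_{m+1,D_mp}$ for $p\in\NN^k$. These make sense because $E_m$ has integer entries, so $E_mn\in\ZZ^d$, and $D_m$ has positive integer entries, so $D_mp\in\NN^k$. Since $n\mapsto E_mn$ is a homomorphism of $\ZZ^d$ and $p\mapsto D_mp$ is a homomorphism of the semigroup $\NN^k$, and $U_{m+1}$ and $V_{m+1}$ are representations, $W$ is a unitary representation of $\ZZ^d$ and $T$ is an isometric representation of $\NN^k$.

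Next I would check that $T$ is Nica-covariant, using the equivalent form $V_p^*V_q=V_{(p\vee q)-p}V^*_{(p\vee q)-q}$ recorded before~\eqref{relation:V-U}. Applying it to $V_{m+1}$ gives $T_p^*T_q=V_{m+1,D_mp}^*V_{m+1,D_mq}=V_{m+1,(D_mp\vee D_mq)-D_mp}V^*_{m+1,(D_mp\vee D_mq)-D_mq}$. This is the one place where the hypothesis that $D_m$ is diagonal with positive entries is used: the join on $\NN^k$ is computed coordinatewise, so $D_mp\vee D_mq=D_m(p\vee q)$, and the displayed product becomes $V_{m+1,D_m((p\vee q)-p)}V^*_{m+1,D_m((p\vee q)-q)}=T_{(p\vee q)-p}T^*_{(p\vee q)-q}$, which is Nica-covariance for $T$.

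Finally I would verify the commutation relation. Applying~\eqref{relation:V-U} for $B_{m+1}$ (with matrix $\theta_{m+1}$) to $n'=E_mn$ and $p'=D_mp$ gives $W_nT_p=U_{m+1,E_mn}V_{m+1,D_mp}=e^{2\pi i(D_mp)^T\theta_{m+1}(E_mn)}V_{m+1,D_mp}U_{m+1,E_mn}=e^{2\pi ip^TD_m^T\theta_{m+1}E_mn}T_pW_n$. Since $D_m$ is diagonal we have $D_m^T=D_m$, and by~\eqref{relatetheta} we have $D_m\theta_{m+1}E_m=\theta_m$, so the exponent equals $2\pi ip^T\theta_m n$, which is exactly~\eqref{relation:V-U} for $\theta_m$. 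Hence the universal property of $B_m$ applies and yields the desired $\pi_m$. I do not expect a real obstacle here: the argument is a routine check against the universal property. The only subtlety worth flagging is that diagonality of $D_m$ enters twice, once so that $D_m$ commutes with the lattice operation on $\NN^k$ (needed for Nica-covariance of $T$) and once so that $D_m^T=D_m$ (needed so that~\eqref{relatetheta} delivers $\theta_m$ in the exponent), which is why the hypotheses on $D_m$ are phrased as they are.
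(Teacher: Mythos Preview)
Your proof is correct and follows essentially the same approach as the paper: define the candidate representations, check Nica covariance via the diagonality of $D_m$ (the paper uses the $V_pV_p^*V_qV_q^*$ form rather than your $V_p^*V_q$ form, but these are equivalent), verify the commutation relation using~\eqref{relatetheta}, and invoke the universal property. Your explicit remark that $D_m^T=D_m$ is needed in the exponent is a point the paper leaves implicit.
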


\begin{proof}
We define $U:\ZZ^d \to B_{m+1}$ by $U_n=U_{m+1,E_mn}$  and $V:\NN^k\to B_{m+1}$ by
$V_p=V_{m+1,D_mp}$. Then since $D_m$ and $E_m$ have entries in $\NN$, $U$ is a unitary
representation of $\ZZ^d$ and $V$ is an isometric representation of $\NN^k$.

We claim that $V$ is Nica-covariant. To see this, we take $p,q\in \NN^k$. Then Nica
covariance of $p\mapsto V_{m+1,p}$ implies that
\begin{align}\label{NicacovVm+1}
V_pV_p^*V_{q}V_{q}^*&=V_{m+1,D_mp}V_{m+1,D_mp}^*V_{m+1,D_mq}V_{m+1,D_mq}^*\\
&=V_{m+1,(D_mp)\vee(D_mq)}V_{m+1,(D_mp)\vee(D_mq)}^*.\notag
\end{align}
Now recall that $D_m$ is diagonal\footnote{This is crucial here. For example, consider
\[
D=\begin{pmatrix}1&1\\0&1\end{pmatrix}.
\]
Then $De_1=e_1$, $De_2=e_1+e_2$, $e_1\vee e_2=e_1+e_2$, and $D(e_1\vee e_2) =2e_1+e_2$ is
not the same as $(De_1)\vee(De_2)=e_1+e_2$.}, with diagonal entries $d_{m,j}$, say. Then
for $1\leq j\leq k$ we have
\begin{align*}
\big((D_mp)\vee(D_mq)\big)_j&=\max\{(D_mp)_j,(D_mq)_j\}=\max\{d_{m,j}p_j, d_{m,j}q_j\}\\
&=d_{m,j}\max\{p_j,q_j\}=d_{m,j}(p\vee q)_j\\
&= (D_m(p\vee q))_j.
\end{align*}
Thus
\[
V_{m+1,(D_mp)\vee(D_mq)}=V_{m+1, D_m(p\vee q)}=V_{p\vee q},
\]
and~\eqref{NicacovVm+1} says that $V$ is Nica covariant.

We next claim that $U$ and $V$ satisfy the commutation relation~\eqref{relation:V-U}. We
take $n\in \ZZ^d$ and $p\in \NN^k$, and compute using the commutation relation in
$B_{m+1}$:
\begin{align*}
U_nV_p&=U_{m+1,E_mn}V_{m+1,D_mp}\\
&=e^{2\pi i(D_mp)^T\theta_{m_1}E_mn}V_{m+1,D_mp}U_{m+1,E_mn}\\
&=e^{2\pi ip^T(D_m\theta_{m_1}E_m)n}V_{m+1,D_mp}U_{m+1,E_mn}\\
&=e^{2\pi ip^T\theta_m n}V_pU_n\quad \text{using~\eqref{relatetheta}.}
\end{align*}
Now the universal property of $B_m$ gives the desired homomorphism $\pi_m$.
\end{proof}

\begin{remark}
Although we don't think we use this anywhere, the homomorphisms $\pi_m$ are in fact
injective. One way to see this is to use the Nica covariance of $n\mapsto V_{m,n}$ to get
a homomorphism $\pi_{V_m}:\TT(\NN^k)\to B_{\theta^m}$, and interpret~\eqref{relation:V-U}
as saying that $(\pi_{V_m},U_m)$ is a covariant representation of a dynamical system
$(\TT(\NN^k),\ZZ^d,\gamma^m)$ in the algebra $B_{\theta^m}$. Then $B_{\theta^m}$ has the
universal property which characterises the crossed product
$\TT(\NN^k)\rtimes_{\gamma^m}\ZZ^d$, and we can deduce from the equivariant uniqueness
theorem for the crossed product (for example, \cite[Corollary~4.3]{R}) that the
representation
\[
\pi_{D_m,E_m}:=\pi_{V_{m+1}\circ D_m}\rtimes (U_{m+1}\circ E_m)
\]
of $\TT^k(\NN^k)\rtimes_{\gamma^m} \ZZ^d$ in $\TT^k(\NN^k)\rtimes_{\gamma^{m+1}} \ZZ^d$
is faithful.
\end{remark}

We now define our \emph{higher-rank Toeplitz noncommutative solenoid} to be the direct
limit
\begin{equation}\label{defBinfty}
B_\infty:=\varinjlim_{m\in \NN}(B_m,\pi_m).
\end{equation}
We write $\pi_{m,\infty}$ for the canonical homomorphism of $B_m$ into $B_\infty$. To
ease notation we also write $U_{m,n}$ for the image $\pi_{m,\infty}(U_{m,n})$ in
$B_\infty$.

Now we use the vectors $r^m\in (0,\infty)^k$ from our set-up to define the dynamics we
propose to study.

\begin{prop}
There is a dynamics $\alpha:\RR\to \Aut B_\infty$ such that
\begin{equation}\label{defalpha}
\alpha_t\big(V_{m,p}U_{m,n}V_{m,q}^*\big)=e^{it(p-q)^Tr^m}V_{m,p}U_{m,n}V_{m,q}^*.
\end{equation}
\end{prop}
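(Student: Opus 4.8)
The plan is to construct the dynamics one block at a time and then pass to the direct limit. Fix $m$ and $t\in\RR$. Since $p^Tr^m\in(0,\infty)$ for $p\in\NN^k\setminus\{0\}$, the scalars $e^{itp^Tr^m}$ are unimodular, so $U\colon n\mapsto U_{m,n}$ and $V\colon p\mapsto e^{itp^Tr^m}V_{m,p}$ are respectively a unitary representation of $\ZZ^d$ and an isometric representation of $\NN^k$ in $B_m$. Rescaling the $V_{m,p}$ by unimodular scalars does not change the range projections $V_{m,p}V_{m,p}^*$, so the first form of Nica covariance is immediate; equivalently, in $V_p^*V_q=e^{it(q-p)^Tr^m}V_{m,p}^*V_{m,q}$ and $V_{(p\vee q)-p}V_{(p\vee q)-q}^*=e^{it(q-p)^Tr^m}V_{m,(p\vee q)-p}V_{m,(p\vee q)-q}^*$ the scalars agree because $\big((p\vee q)-p\big)-\big((p\vee q)-q\big)=q-p$, so Nica covariance of $V$ follows from that of $p\mapsto V_{m,p}$. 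The commutation relation \eqref{relation:V-U} for $\theta_m$ is unaffected since the inserted scalars commute past $U_{m,n}$. Hence the universal property of $B_m$ yields a homomorphism $\alpha^m_t\colon B_m\to B_m$ with $\alpha^m_t(U_{m,n})=U_{m,n}$ and $\alpha^m_t(V_{m,p})=e^{itp^Tr^m}V_{m,p}$.

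Checking on generators gives $\alpha^m_0=\id$ and $\alpha^m_s\circ\alpha^m_t=\alpha^m_{s+t}$, so each $\alpha^m_t$ is an automorphism with inverse $\alpha^m_{-t}$; and $t\mapsto\alpha^m_t(a)$ is norm-continuous for $a$ in the dense subspace $\lsp\{V_{m,p}U_{m,n}V_{m,q}^*\}$, hence for all $a\in B_m$, so $\alpha^m$ is a dynamics on $B_m$. The key step is that these block dynamics are compatible with the connecting maps: $\pi_m\circ\alpha^m_t=\alpha^{m+1}_t\circ\pi_m$. It is enough to check this on generators. Both sides send $U_{m,n}$ to $U_{m+1,E_mn}$. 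On $V_{m,p}$ we have $\pi_m(\alpha^m_t(V_{m,p}))=e^{itp^Tr^m}V_{m+1,D_mp}$ and $\alpha^{m+1}_t(\pi_m(V_{m,p}))=\alpha^{m+1}_t(V_{m+1,D_mp})=e^{it(D_mp)^Tr^{m+1}}V_{m+1,D_mp}$, and these agree because $D_m$ is diagonal, so $(D_mp)^Tr^{m+1}=p^TD_mr^{m+1}=p^Tr^m$ by \eqref{relater}.

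Now the universal property of the direct limit $B_\infty=\varinjlim(B_m,\pi_m)$ gives, for each $t$, a homomorphism $\alpha_t\colon B_\infty\to B_\infty$ with $\alpha_t\circ\pi_{m,\infty}=\pi_{m,\infty}\circ\alpha^m_t$ for all $m$. Since $\alpha_{-t}$ is an inverse for $\alpha_t$ we get $\alpha_t\in\Aut B_\infty$, and $\alpha_0=\id$, $\alpha_s\circ\alpha_t=\alpha_{s+t}$ pass up from the blocks. Strong continuity holds because $t\mapsto\alpha_t(\pi_{m,\infty}(a))=\pi_{m,\infty}(\alpha^m_t(a))$ is continuous for each $m$ and $a\in B_m$, while $\bigcup_m\pi_{m,\infty}(B_m)$ is dense in $B_\infty$; a standard $\varepsilon/3$ argument then gives continuity of $t\mapsto\alpha_t(b)$ for every $b\in B_\infty$. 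Finally, applying $\alpha_t$ to a spanning element and using multiplicativity and $*$-preservation, $\alpha_t(V_{m,p}U_{m,n}V_{m,q}^*)=e^{itp^Tr^m}\overline{e^{itq^Tr^m}}\,V_{m,p}U_{m,n}V_{m,q}^*=e^{it(p-q)^Tr^m}V_{m,p}U_{m,n}V_{m,q}^*$, which is \eqref{defalpha}.

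The argument has no serious obstacle; the one place that genuinely uses the structure of the set-up, rather than being routine bookkeeping, is the compatibility $\pi_m\circ\alpha^m_t=\alpha^{m+1}_t\circ\pi_m$, where both the diagonality of $D_m$ and the normalisation $D_mr^{m+1}=r^m$ from \eqref{relater} are exactly what is needed to make the phase factors match.
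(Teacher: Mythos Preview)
Your proof is correct and follows essentially the same approach as the paper: build the dynamics $\alpha^m$ on each block $B_m$ via the universal property, verify the compatibility $\pi_m\circ\alpha^m_t=\alpha^{m+1}_t\circ\pi_m$ on generators using \eqref{relater}, and then pass to the direct limit. You have simply filled in more of the routine details (the explicit Nica-covariance check, the group law, the $\varepsilon/3$ argument for strong continuity) that the paper leaves implicit. One minor remark: in the compatibility step you invoke diagonality of $D_m$, but all that is actually used there is $D_m^T=D_m$; diagonality is genuinely needed elsewhere (for Nica covariance of $\pi_m$), but for this particular phase-matching only symmetry matters.
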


\begin{proof}
Since $U_m$ and $V_m':p\mapsto e^{itp^Tr^m}V_{m,p}$ satisfy the same relations in $B_m$
as $U_m$ and $V_m$, there is  a dynamics $\alpha^{r^m}:\RR\to \Aut B_m$ such that
\[
\alpha^{r^m}(V_{m,p}U_{m,n}V_{m,q}^*)=e^{it(p-q)^Tr^m}V_{m,p}U_{m,n}V_{m,q}^*.
\]

We claim that $\pi_m\circ \alpha^{r_m}_t=\alpha^{r^{m+1}}_t\circ\pi_m$. To see this, we
compute on generators. First, for  $n\in \ZZ^d$ we have
\begin{align*}
\alpha^{r^{m+1}}_t(\pi_m(U_{m,n}))
    &= \alpha^{r^{m+1}}_t(U_{m+1, E_m n})
    = U_{m+1, E_m n}\\
    &= \pi_m(U_{m,n})
    = \pi_m(\alpha^{r^m}_t(U_{m,n})).
\end{align*}
Second, for $p\in \NN^k$, and using the relation~\eqref{relater} at the crucial step to
pass from $r^{m+1}$ to $r^m$, we have
\begin{align*}
\alpha^{r^{m+1}}_t(\pi_m(V_{m,p}))
    &= \alpha^{r^{m+1}}_t(V_{m+1, D_m p})
    = e^{it (D_m p)^T r^{m+1}} V_{m+1, D_m p} \\
    &= e^{itp^T D_m r^{m+1} } \pi_m(V_{m, p})
    = e^{it p^T r^m } \pi_m(V_{m, p})\\
    &= \pi_m(\alpha^{r^m}_t(V_{m,p})).
\end{align*}

Now the universal property of the direct limit implies that for each $t\in \RR$, there is
an automorphism $\alpha_t$ of $B_\infty$ such that $\alpha_t\circ
\pi_{m,\infty}=\pi_{m,\infty}\circ \alpha^{r^m}_t$. The formula~\eqref{defalpha} (which
implicitly involves the homomorphisms $\pi_{m,\infty}$) implies that $t\mapsto \alpha_t$
is a strongly continuous action of $\RR$ on $B_\infty$.
\end{proof}

Our goal is to describe the KMS states of the dynamical system $(B_\infty,\alpha)$. But
first we pause to establish some conventions about probability measures on inverse
limits.

\begin{remark}
All measures in this paper are positive Borel measures. We view probability measures on a
compact space $X$ as states on the $C^*$-algebra $C(X)$ of continuous functions.  We
write $P(X)$ for the set of probability measures on $X$.

When $\big\{h_m:X_{m+1}\to X_m:m\in \NN\big\}$ is an inverse system of compact spaces
with each $h_m$ surjective, the inverse limit $\varprojlim (X_m,h_m)$ is also a compact
space. We write $h_{m,\infty}$ for the canonical map of $X_\infty:=\varprojlim (X_m,h_m)$
onto $X_m$, so that we have $h_{m,\infty}=h_m\circ h_{m+1,\infty}$ for all $m\in \NN$.
The maps $h_{m,\infty}$ induce maps $h_{m,\infty*}$ on measures: if $\mu$ is a
probability measure on $X_\infty$, then $\mu_m:=h_{m,\infty*}(\mu)$ is the measure on
$X_m$ such that
\[
\int_{X_m}f\,d\mu_m=\int_{X_\infty} (f\circ h_{m,\infty})\,d\mu\quad\text{for $f\in C(X_m)$.}
\]
Conversely, because each $h_m$ is surjective, for any sequence of probability measures
$\{\mu_m\in P(X_m):m\in \NN\}$ such that $\mu_m=h_{m*}(\mu_{m+1})$ for all $m$ there is a
probability  measure $\mu\in P(X_\infty)$ such that $\mu_m=h_{m,\infty*}(\mu)$ for all
$m$ (see \cite[Lemma~6.1]{BLPRR}, for example). Thus the simplices $P(\varprojlim X_m)$
and $\varprojlim P(X_m)$ are canonically isomorphic.
\end{remark}

To state our main result, we need to observe that, because the entries in the $E_m$ are
integers, multiplication by $E_m^T$ on $\RR^d$ maps $\ZZ^d$ into $\ZZ^d$ and hence
induces a homomorphism $E_m^T$ of $\ESS^d=\RR^d/\ZZ^d$ onto itself. We show that the KMS
states are parametrised by the probability measures on the inverse limit $\varprojlim
(\ESS^d, E^T_m)$, which is an ordinary solenoid. We write $E^T_{m,\infty}$ for the
projection of $\varprojlim (\ESS^d, E^T_m)$ on the $m$th copy of $\ESS^d$, so that we have
\[
E^T_{m,\infty}=E^T_m\circ E^T_{m+1, \infty} \quad\text{for $m\in \NN$.}
\]

The main theorem of this paper is the following; we prove it at the end of the paper.

\begin{thm}\label{newbigthm}
Suppose that $\mu\in P\big(\varprojlim(\ESS^d,E_m^T)\big)$ and $\beta>0$. Let $\{\mu_m\}$
be the corresponding sequence of probability measures on $\ESS^d$. For $m\in \NN$ and
$n\in \NN^d$, we define the \emph{moment} $M_{m,n}(\mu)$ to be the number
\[
M_{m,n}(\mu)=\int_{\ESS^d} e^{2\pi ix^Tn}\,d\mu_m(x)=\int_{\varprojlim(\ESS^d,E_m^T)} e^{2\pi iE^T_{m,\infty}(x)^Tn}\,d\mu(x).
\]
Then there is a KMS$_\beta$ state $\psi_\mu$ on $(B_{\infty},\alpha)$ such that
\begin{equation}\label{explicitform}
\psi_\mu(V_{m,p} U_{m,n} V_{m,q}^*)=\delta_{p,q}e^{-\beta p^Tr^m}\prod_{j=1}^k\frac{\beta r^m_j}{\beta r^m_j-2\pi i(\theta_m^Tn)_j}M_{m,n}(\mu).
\end{equation}
The map $\mu\mapsto \psi_\mu$ is an affine homeomorphism of $P\big(\varprojlim(\ESS^d,
E_m^T)\big)$ onto the simplex KMS$_\beta (B_\infty, \alpha)$ of KMS$_\beta$ states.
\end{thm}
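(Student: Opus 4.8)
The plan is to obtain Theorem~\ref{newbigthm} by composing two correspondences established earlier in the paper: the identification of the KMS$_\beta$ states of $(B_\infty,\alpha)$ with the solutions of the continuously parametrised subinvariance relation (Theorem~\ref{subinv}), and the explicit description of those solutions in terms of probability measures on the solenoid $\varprojlim(\ESS^d,E^T_m)$ (Theorem~\ref{thm:sub-inv}); both of these in turn rest on the analysis of a single block in Proposition~\ref{prp:KMS char}. So this last theorem is really an assembly-and-unwinding step. Given $\mu\in P(\varprojlim(\ESS^d,E^T_m))$, Theorem~\ref{thm:sub-inv} produces the solution of the subinvariance relation with free parameter $\mu$, and Theorem~\ref{subinv} turns that solution into a KMS$_\beta$ state, which we denote $\psi_\mu$; running both correspondences backwards shows every KMS$_\beta$ state is some $\psi_\mu$, and since each step is implemented by an affine bijection the composite $\mu\mapsto\psi_\mu$ is affine and bijective. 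It then remains to (a) read off the values \eqref{explicitform} on the spanning elements $V_{m,p}U_{m,n}V_{m,q}^*$, and (b) upgrade the affine bijection to an affine homeomorphism.

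For (a) I would restrict: $\psi_\mu\circ\pi_{m,\infty}$ is a KMS$_\beta$ state of the block $(B_m,\alpha^{r^m})$, so Proposition~\ref{prp:KMS char} gives $\psi_\mu(V_{m,p}U_{m,n}V_{m,q}^*)=\delta_{p,q}\,e^{-\beta p^Tr^m}\,\widehat{\nu_m}(n)$, where $\nu_m\in P(\ESS^d)$ is the measure the proposition associates to this block state and $\widehat{\nu_m}(n)=\int_{\ESS^d}e^{2\pi ix^Tn}\,d\nu_m$. The task is then to show $\widehat{\nu_m}(n)=\prod_{j=1}^k\frac{\beta r^m_j}{\beta r^m_j-2\pi i(\theta_m^Tn)_j}\,M_{m,n}(\mu)$. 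This is exactly the shape of the solution furnished by Theorem~\ref{thm:sub-inv}: the correction factor is the Fourier transform, evaluated at the relevant frequency, of the product of exponential distributions on $[0,\infty)$ with rates $\beta r^m_j$ (so that $\nu_m$ is, morally, the convolution of $\mu_m$ with the pushforward of that product measure along $\theta_m$), while the remaining factor is the $n$-th moment of the pushforward $\mu_m$ of $\mu$. The one thing needing care here is coherence across $m$: one must check that the $\nu_m$ satisfy $E^T_{m\ast}\nu_{m+1}=\nu_m$, equivalently that the $\mu_m$ are the pushforwards of a single $\mu$ on the solenoid, and this is where Lemma~\ref{normalising} and the \emph{exact} identities \eqref{relatetheta} and \eqref{relater} (as flagged in Remark~\ref{rmk:theta woes}) enter; the relation $D_mr^{m+1}=r^m$ makes the factors $e^{-\beta p^Tr^m}$ compatible with $\pi_m(V_{m,p})=V_{m+1,D_mp}$, and $D_m\theta_{m+1}E_m=\theta_m$ does the same for the Laplace-type factors.

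For (b), note $P(\varprojlim(\ESS^d,E^T_m))$ is weak-$*$ compact and the set of KMS$_\beta$ states is weak-$*$ closed in the unit ball of $B_\infty^*$, hence also compact; the map $\mu\mapsto\psi_\mu$ is weak-$*$ continuous because $\mu\mapsto\mu_m$ is weak-$*$ continuous and, by \eqref{explicitform}, each $\psi_\mu(V_{m,p}U_{m,n}V_{m,q}^*)$ is a fixed continuous functional of $\mu_m$, while the $V_{m,p}U_{m,n}V_{m,q}^*$ span a dense subspace and all the states have norm one; a continuous affine bijection between compact convex Hausdorff spaces is an affine homeomorphism.

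I expect step (a) to be the main obstacle — not any single calculation, but the bookkeeping required to see that the free parameter of the Theorem~\ref{thm:sub-inv} solution attached to $\psi_\mu$ is precisely $\mu$ (with its coherent pushforwards $\mu_m$), and that the transform in Theorem~\ref{thm:sub-inv} produces exactly the factor $\prod_{j=1}^k\beta r^m_j(\beta r^m_j-2\pi i(\theta_m^Tn)_j)^{-1}$; the delicate point, as in Remark~\ref{rmk:theta woes}, is that this coherence breaks down unless \eqref{relatetheta} holds as an identity of matrices and not merely modulo $\ZZ^d$.
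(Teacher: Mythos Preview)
Your proposal is essentially correct and matches the paper's approach: the paper packages the assembly you describe as an intermediate result (Theorem~\ref{bigthm}) giving
\[
\psi_\mu(V_{m,p} U_{m,n} V_{m,q}^*)=\delta_{p,q}e^{-\beta p^Tr^m}\prod_{j=1}^k(\beta r^m_j)\int_{\ESS^d} e^{2\pi i x^Tn}\, d\nu_{\mu_m}(x),
\]
and then derives \eqref{explicitform} from it by evaluating $\int_{\ESS^d} e^{2\pi ix^Tn}\,d\nu_{\mu_m}$ via the integral formula \eqref{measure nu of mu} from Theorem~\ref{thm:sub-inv}, exactly the Laplace-type computation you sketch in~(a).

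One point of attribution to correct: Theorem~\ref{subinv} only supplies the direction ``KMS$_\beta$ state $\Rightarrow$ subinvariant sequence $(\nu_m)$'', which is what feeds into surjectivity of $\mu\mapsto\psi_\mu$. It does \emph{not} turn a subinvariant solution into a KMS state. The forward construction of $\psi_\mu$ from $\mu$ goes instead via Proposition~\ref{prp:KMS char}(\ref{it:phi-mu}) applied to each block $B_m$ (to get states $\psi_m$), Lemma~\ref{normalising} for the coherence $\psi_{m+1}\circ\pi_m=\psi_m$, and then an inverse-limit-of-states argument (the paper cites \cite[Proposition~3.1]{BHS}) to assemble the $\psi_m$ into a state on $B_\infty$. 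Likewise, Theorem~\ref{thm:sub-inv} is a block-level statement parametrising subinvariant measures on a single $\ESS^d$ by $M(\ESS^d)$, not by measures on the solenoid; the passage to the solenoid happens only after one checks, via Lemma~\ref{normalising} and the commutative-diagram argument, that the block-level parameters $\mu_m$ are themselves $E^T_m$-coherent. Your plan has all the ingredients, but the logical flow is: Proposition~\ref{prp:KMS char} and Lemma~\ref{normalising} build $\psi_\mu$; Theorem~\ref{subinv} plus Theorem~\ref{thm:sub-inv}(\ref{item-c-thm:sub-inv}) give surjectivity.
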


\begin{remark}
As a reality check, we take $p=q=0$ and $n=0$. Then $V_{m,p}U_{m,n}V_{m,q}^*$ is the
identity $1_{B_m}=1_{B_\infty}$, and our formula collapses to $\psi_\mu(1)=1$.
\end{remark}

\begin{remark}
It is interesting to set $d=k=1$ and compare the formula~\eqref{explicitform} with the
formula (6.4) in Theorem~6.9 of \cite{BHS}, which on the face of it looks different. The
point is that the integral on the right-hand side of \cite[(6.4)]{BHS} is with respect to
the subinvariant measure associated to the probability measure $\mu$, which in our
notation would be $\nu_{\mu_m}$. There is no specific description for this measure in
\cite{BHS}: they get an isomorphism of the simplex $P(\varprojlim \ESS)$ onto the simplex
of subinvariant measures by specifying it on the extreme points (see
\cite[Lemma~8.2]{BHS}). We reconcile the two approaches in Remark~\ref{reconciliation}.
\end{remark}

\section{Equilibrium states on a Toeplitz noncommutative torus}\label{sec:NCT-new}
%%%%%%%%%%%%%%%%%%%%%%%%%%%%%%%%%%%%%%%%%%%%%%%%%%%%%%%%%%%%%%%%%%%%%%%%%%%%%%%

In this section, we fix $\theta\in M_{k,d}([0,\infty))$, and investigate the KMS states
on the Toeplitz noncommutative torus $B_\theta$.

For $n\in \ZZ^d$, we write $g_n$ for the character on $\ESS^d$ given by $g_n(x)=e^{2\pi
ix^Tn}$, and $\iota: C(\ESS^d)\to C^*(\ZZ^d)\subset B_\theta$ for the isomorphism  such
that $\iota(g_n)=U_n$. Then we have
\[
B_\theta=\clsp\big\{V_p\iota(f) V_q^*:f\in C(\ESS^d), p,q\in\NN^k\big\}.
\]
For $y\in\RR^d$ we define $R_y: \ESS^d\to \ESS^d$ by $R_y(x)=x+y$. Later, we will also
write $R_y^*$ for the automorphism of $C(\ESS^d)$ given by $R_y^*f=f\circ R_y$, and
$R_{y*}$ for the dual map on measures defined by
\[
\int_{\ESS^d} f\,dR_{y*}(\mu)=\int_{\ESS^d} R_y^*(f)\,d\mu=\int_{\ESS^d} f\circ R_y\,d\mu.
\]
The assignment $y\mapsto R_y^*$ is a strongly continuous action $R$ of $\RR^d$ on
$C(\ESS^d)$, and each $R_{y*}$ is norm-preserving.

\begin{lemma}\label{lem:action on fns}
For $f \in C(\ESS^d)$ and $p \in \NN^k$ we have
\begin{equation}\label{def-R}
V_p \iota(f) = \iota\big(f\circ R_{-\theta^T p}\big) V_p \quad\text{ and }\quad V^*_p \iota(f) = \iota\big(f\circ R_{\theta^T p}\big) V^*_p.
\end{equation}
\end{lemma}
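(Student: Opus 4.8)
The plan is to prove the two identities in~\eqref{def-R} by checking them on the generators $g_n$ of $C(\ESS^d)$ and then extending by linearity, density, and continuity. Since $B_\theta=\clsp\{V_p\iota(f)V_q^*\}$ and the span of the $g_n$ is dense in $C(\ESS^d)$ (Stone--Weierstrass, or the fact that trigonometric polynomials are dense), and since $\iota$ is an isometric $*$-isomorphism while each $f\mapsto f\circ R_y$ is an isometric automorphism, it suffices to verify the formulas for $f=g_n$.

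First I would treat the left identity. By definition $\iota(g_n)=U_n$, so the left-hand side is $V_pU_n$. The commutation relation~\eqref{relation:V-U} gives $U_nV_p=e^{2\pi i p^T\theta n}V_pU_n$, hence $V_pU_n=e^{-2\pi i p^T\theta n}U_nV_p$. Now I need to recognize $e^{-2\pi i p^T\theta n}g_n$ as $g_n\circ R_{-\theta^Tp}$: indeed $(g_n\circ R_{-\theta^Tp})(x)=g_n(x-\theta^Tp)=e^{2\pi i(x-\theta^Tp)^Tn}=e^{-2\pi i(\theta^Tp)^Tn}e^{2\pi ix^Tn}=e^{-2\pi ip^T\theta n}g_n(x)$, using $(\theta^Tp)^Tn=p^T\theta n$. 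Applying $\iota$ and using that $\iota$ is linear and multiplicative then yields $V_p\iota(g_n)=\iota(e^{-2\pi ip^T\theta n}g_n)V_p=\iota(g_n\circ R_{-\theta^Tp})V_p$, which is exactly the claimed identity on the dense set.

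Next I would derive the right identity from~\eqref{relation:V*-U} in the same way: $U_nV_p^*=e^{-2\pi ip^T\theta n}V_p^*U_n$, so $V_p^*U_n=e^{2\pi ip^T\theta n}U_nV_p^*$, and the scalar $e^{2\pi ip^T\theta n}g_n$ equals $g_n\circ R_{\theta^Tp}$ by the analogous computation with $+\theta^Tp$ in place of $-\theta^Tp$. Alternatively, one can obtain the second formula from the first by taking adjoints: from $V_p\iota(f)=\iota(f\circ R_{-\theta^Tp})V_p$ one gets $\iota(\bar f)V_p^*=V_p^*\iota(\overline{f\circ R_{-\theta^Tp}})=V_p^*\iota(\bar f\circ R_{-\theta^Tp})$, and then replacing $\bar f$ by an arbitrary $f$ and using that $R_{-\theta^Tp}$ and $R_{\theta^Tp}$ are mutually inverse gives $V_p^*\iota(f)=\iota(f\circ R_{\theta^Tp})V_p^*$ after a relabelling; I would pick whichever of these is cleanest to write.

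There is essentially no obstacle here: the only things to be careful about are the bookkeeping of signs and the matrix-transpose conventions (making sure $(\theta^Tp)^Tn=p^T\theta n$ is applied correctly), and confirming that $\iota$ genuinely intertwines pointwise multiplication by a unimodular scalar with the corresponding scalar multiplication in $B_\theta$, which is immediate since $\iota$ is an algebra homomorphism. The density/continuity extension is routine because all maps in sight are bounded (indeed isometric), so equality on a dense subspace forces equality everywhere.
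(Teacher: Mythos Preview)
Your proposal is correct and follows essentially the same approach as the paper: verify the identities on the characters $g_n$ using the commutation relations~\eqref{relation:V-U} and~\eqref{relation:V*-U}, identify the resulting scalar as the effect of the rotation $R_{\mp\theta^Tp}$ on $g_n$, and then extend by density and continuity. The paper handles the second identity by the analogous direct computation from~\eqref{relation:V*-U}; your adjoint alternative also works.
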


\begin{proof}
Since $C(\ESS^d)=\clsp\{g_n:x\mapsto e^{2\pi ix^Tn}: n\in \ZZ^d\}$, it suffices to
check~\eqref{def-R} for $f=g_n$. Let $n \in \ZZ^d$. Then~\eqref{relation:V-U} gives
\[
V_p\,\iota(g_n)= V_p U_n= e^{-2\pi i p^T\theta n} U_n V_p=e^{-2 \pi i p^T \theta n} \iota(g_n) V_p.
\]
Since
\[
e^{-2 \pi i p^T \theta n}g_n(x)=e^{-2 \pi i p^T \theta n}e^{2\pi ix^Tn}=g_n(x-\theta^Tp)=(g_n\circ R_{-\theta^Tp})(x),
\]
the first equality follows. The second follows from a similar computation
using~\eqref{relation:V*-U}.
\end{proof}

\begin{remark}
The minus sign in the first identity in~\eqref{def-R} is crucial. As a reality check,
notice that the signs in the two formulas have to be different, because $V_p^*V_p=1$
means the $\pm\theta^Tp$ have to cancel. As a corollary, note that $V_pV_p^*$, which is a
proper projection, commutes with the $\iota(f)$. (To see that $V_pV_p^*\not=1$, we can
use the specific representation of $B_\theta$ constructed in the proof of
Proposition~\ref{prp:KMS char}(b).)
\end{remark}

 We now fix $r\in(0,\infty)^k$. The universal property of $B_\theta$ gives a dynamics $\alpha^r:\RR\to\Aut B_\theta$ such that
\begin{align}\label{action-tori}
\alpha^r_t(U_n) = U_n\quad\text{and}\quad \alpha^r_t(V_p) = e^{it p^Tr} V_p \quad\text{for $n \in \ZZ^d$, $p \in \NN^k$, $t \in \RR$.}
\end{align}
Then $\alpha_t^r(V_p U_n V^*_q) = e^{it(p-q)^Tr} V_p U_nV^*_q$, and hence \[\{V_p U_n
V^*_q : n \in \ZZ^d, p,q \in \NN^k\}\] is a set of $\alpha^r$-analytic elements spanning
an $\alpha^r$-invariant dense subset of $B_\theta$.

To describe the KMS$_\beta$ states of $(B_\theta,\alpha^r)$, it was tempting to apply
\cite[Theorem~6.1]{AaHR2} to the Toeplitz algebra of the commuting homeomorphisms
$h_j:x\mapsto x+\theta_j$ associated to the rows $\theta_j$ of $\theta$. That result is
in several ways more general than we need, but has an unfortunate hypothesis of rational
independence on the set $\{r_j\}$ which we prefer to avoid.

\begin{prop}\label{prop:KMS formula}
Suppose that $\beta>0$ and $\phi$ is a KMS$_\beta$ state of $(B_\theta,\alpha^r)$. Then
$\phi$ is a KMS$_\beta$ state of $(B_\theta,\alpha^r)$ if and only if
\begin{equation}\label{eq:KMS char}
\phi(V_p U_n V^*_q) = \delta_{p,q} e^{- \beta p^Tr} \phi(U_n)\quad\text{ for }n \in \ZZ^d\text{ and }p,q \in \NN^k.
\end{equation}
\end{prop}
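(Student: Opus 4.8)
The plan is to prove the two directions of the equivalence separately, with the forward direction being essentially a specialization of the KMS condition and the reverse direction requiring us to promote the relation \eqref{eq:KMS char} to the full KMS condition on the spanning set $\{V_pU_nV_q^*\}$.

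\medskip

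\emph{The forward direction.} Suppose $\phi$ is a KMS$_\beta$ state. The elements $V_q$ and $V_qU_n^{-1} = V_qU_{-n}$ are $\alpha^r$-analytic, with $\alpha^r_{i\beta}(V_q) = e^{-\beta q^Tr}V_q$. Applying the KMS condition to the pair $(V_pU_n, V_q^*)$ — or more precisely writing $V_pU_nV_q^* = (V_pU_n)V_q^*$ and using $\phi(ab) = \phi(b\,\alpha^r_{i\beta}(a))$ with $a = V_q^*$, $b = V_pU_nV_q^*\cdot$(something) — the cleanest route is: apply the KMS condition to move $V_q^*$ around. Concretely, $\phi(V_pU_nV_q^*) = \phi(V_q^*\,\alpha^r_{i\beta}(V_pU_n)) \cdot$ no; rather use $\phi(xy)=\phi(y\alpha_{i\beta}(x))$ with $x=V_pU_n$ and $y=V_q^*$ to get $\phi(V_pU_nV_q^*) = e^{-\beta p^Tr}\phi(V_q^*V_pU_n)$. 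Then $V_q^*V_p$ is computed via Nica covariance: $V_q^*V_p = V_{(p\vee q)-q}V^*_{(p\vee q)-p}$. Using the commutation relation \eqref{relation:V*-U} to pull $U_n$ across, and then applying the KMS condition a second time to the isometry $V^*_{(p\vee q)-p}$ sitting on the right, one forces a $\delta_{p,q}$: when $p\neq q$, at least one of $(p\vee q)-q$, $(p\vee q)-p$ is nonzero, and the trace-like identity $\phi(V_sX) = \phi(XV_s)$ for $s\neq 0$ combined with $V_s^*V_s=1$, $V_sV_s^*\neq 1$ produces $\phi$ of a proper projection conjugated into a contradiction with positivity unless the whole quantity vanishes. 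This is the standard argument (as in \cite{aHLRS1, aHLRS}) that KMS states are supported on the "diagonal." When $p=q$ the factor $V_q^*V_p = 1$ and we are left with $e^{-\beta p^Tr}\phi(U_n)$.

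\medskip

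\emph{The reverse direction.} Now suppose $\phi$ is a state satisfying \eqref{eq:KMS char}; we must verify the KMS condition $\phi(ab) = \phi(b\,\alpha^r_{i\beta}(a))$ for $a,b$ ranging over the spanning set $\{V_pU_nV_q^*\}$, which suffices by continuity and density. Using the multiplication formula for products $V_pU_nV_q^*\cdot V_{p'}U_{n'}V_{q'}^*$ displayed in the excerpt (which rewrites the product back into the spanning form with explicit phase factor and index shifts via $q\vee p'$), both sides $\phi(ab)$ and $\phi(b\,\alpha^r_{i\beta}(a))$ become explicit scalars by \eqref{eq:KMS char}: each is a phase times $e^{-\beta(\text{something})^Tr}$ times $\phi$ of a single unitary $U_m$, nonzero only when a matching $\delta$ holds. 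The task is then a bookkeeping check that the two resulting scalar expressions agree — the phase factors match because $\theta$-twists are symmetric in the right way, the exponential weights match because $\alpha^r_{i\beta}$ contributes exactly the factor $e^{-\beta(p-q)^Tr}$ that \eqref{eq:KMS char} builds in, and the $\delta$-constraints coincide. One should organize this by cases according to whether the relevant index differences vanish.

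\medskip

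\emph{Main obstacle.} The genuine content is the forward direction's $\delta_{p,q}$ argument: extracting, from the KMS condition alone, that $\phi(V_pU_nV_q^*)=0$ when $p\neq q$. The trick is to conjugate by a suitable isometry to isolate a term of the form $\phi(V_sWV_s^*)$ versus $\phi(WV_s^*V_s)=\phi(W)$ and play off $V_sV_s^*\le 1$; one typically applies the KMS/tracial identity to an element like $V^*_{s}(V_pU_nV_q^*)$ and its adjoint, using a Cauchy–Schwarz estimate $|\phi(x)|^2\le \phi(xx^*)\phi(x^*x)$ to squeeze the off-diagonal term to zero. The reverse direction, by contrast, is a (lengthy but mechanical) identity verification using the product formula already in hand, so I expect no conceptual difficulty there.
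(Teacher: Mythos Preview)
Your overall architecture matches the paper's: the reverse direction is indeed a bookkeeping verification on the spanning set using the product formula, and the paper carries it out exactly as you anticipate (the one nontrivial step being that the two Kronecker deltas $\delta_{Q+p,P+c}$ and $\delta_{N+b,M+q}$ coincide and, when they hold, force $M=Q$ and $N=P$ via the $\wedge=0$ conditions). So that half is fine.

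The gap is in your forward direction, precisely in the case $p\neq q$ but $p^Tr=q^Tr$ (which genuinely occurs since $r\in(0,\infty)^k$ only rules out $p^Tr=q^Tr$ when $p-q$ has a sign). Two applications of the KMS condition give $\phi(V_pU_nV_q^*)=e^{-\beta(p-q)^Tr}\phi(V_pU_nV_q^*)$, which kills the case $(p-q)^Tr\neq 0$ immediately, but says nothing when $(p-q)^Tr=0$. Your sketch for this residual case is not an argument: the ``trace-like identity $\phi(V_sX)=\phi(XV_s)$'' is false here (KMS gives $\phi(V_sX)=e^{-\beta s^Tr}\phi(XV_s)$ with $s^Tr>0$ whenever $0\neq s\in\NN^k$), and a single Cauchy--Schwarz estimate only yields $|\phi(V_pU_nV_q^*)|\le e^{-\beta(p+q)^Tr/2}$, not zero. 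There is no ``contradiction with positivity'' to extract.

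What the paper actually does (Lemmas~\ref{lem:CS} and~\ref{lem:flipflop}) is the missing idea: set $P=(p\vee q)-p$ and use the KMS condition together with Nica covariance to prove the \emph{iteration}
\[
\phi(V_p\,\iota(f)V_q^*)=\phi\big(V_{p+lP}\,\iota(f\circ R_{l\theta^TP})V_{q+lP}^*\big)\quad\text{for all }l\in\NN,
\]
and then bound the right-hand side via Cauchy--Schwarz by $e^{-\beta(p+lP)^Tr}\|f\|_\infty$. Since $P\neq 0$ (or else swap roles), $P^Tr>0$ and the bound tends to $0$ as $l\to\infty$. You correctly flagged this as the main obstacle, but the iterate-to-infinity mechanism is the crux, and it is absent from your plan.
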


To prove Proposition~\ref{prop:KMS formula} we need two lemmas. The arguments are based
on the proofs of Lemmas~5.2 and~5.3 in \cite{aHLRS}.

\begin{lemma}\label{lem:CS}
Suppose that $\beta>0$ and $\phi$ is a KMS$_\beta$ state of $(B_\theta,\alpha^r)$.  If
$p,q \in \NN^k$ satisfy $p^Tr = q^Tr$, then
\begin{enumerate}
\item\label{it:q->p}  $\phi(V_p U_n V^*_p) = \phi(V_qU_n V^*_q)$ for $n\in \ZZ^d$;
    and
\item\label{it:cs estimate}$|\phi(V_p\,\iota(f) V^*_q)| \le \phi(V_p \,\iota(f)
    V^*_p)$ for positive $f\in C(\ESS^d)$.
\end{enumerate}
\end{lemma}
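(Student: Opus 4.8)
The plan is to get (a) as an immediate consequence of the KMS condition and the isometry relation, and then to get (b) from Cauchy--Schwarz, much as in \cite[Lemmas~5.2, 5.3]{aHLRS}.

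For (a), I would apply the KMS$_\beta$ condition $\phi(ab)=\phi\big(b\,\alpha^r_{i\beta}(a)\big)$ to the pair $a=V_p$ and $b=U_nV_p^*$, both of which lie in the dense $\alpha^r$-invariant $*$-subspace $\lsp\{V_aU_mV_b^*\}$ of analytic elements. Since $\alpha^r_{i\beta}(V_p)=e^{-\beta p^Tr}V_p$ by~\eqref{action-tori} and $V_p^*V_p=1$, this gives
\[
\phi(V_pU_nV_p^*)=\phi\big(U_nV_p^*\,\alpha^r_{i\beta}(V_p)\big)=e^{-\beta p^Tr}\phi(U_nV_p^*V_p)=e^{-\beta p^Tr}\phi(U_n).
\]
As $p^Tr=q^Tr$ by hypothesis, the right-hand side is unchanged on replacing $p$ by $q$, which is exactly~(\ref{it:q->p}). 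Moreover, since $\iota(g_n)=U_n$ and $C(\ESS^d)=\clsp\{g_n:n\in\ZZ^d\}$, the two bounded functionals $f\mapsto\phi(V_p\iota(f)V_p^*)$ and $f\mapsto\phi(V_q\iota(f)V_q^*)$ on $C(\ESS^d)$ agree on the dense subspace $\lsp\{g_n\}$, hence coincide; so in fact $\phi(V_p\iota(f)V_p^*)=\phi(V_q\iota(f)V_q^*)$ for all $f\in C(\ESS^d)$.

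For~(\ref{it:cs estimate}), given positive $f\in C(\ESS^d)$ I would set $h=\sqrt f\in C(\ESS^d)$, which is real-valued, so that $\iota(f)=\iota(h)^*\iota(h)$ and $V_p\iota(f)V_q^*=\big(\iota(h)V_p^*\big)^*\big(\iota(h)V_q^*\big)$. The Cauchy--Schwarz inequality for the state $\phi$ then yields
\[
|\phi(V_p\iota(f)V_q^*)|^2\le\phi\big(V_p\iota(h)^*\iota(h)V_p^*\big)\,\phi\big(V_q\iota(h)^*\iota(h)V_q^*\big)=\phi(V_p\iota(f)V_p^*)\,\phi(V_q\iota(f)V_q^*),
\]
and the extended form of~(\ref{it:q->p}) from the previous step makes the two factors on the right equal. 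Hence $|\phi(V_p\iota(f)V_q^*)|^2\le\phi(V_p\iota(f)V_p^*)^2$, and taking square roots (using $\phi(V_p\iota(f)V_p^*)\ge0$, valid since $\iota(f)\ge0$) gives~(\ref{it:cs estimate}).

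There is no real obstacle here: the entire content of~(\ref{it:q->p}) is compressed into the identity $\phi(V_pU_nV_p^*)=e^{-\beta p^Tr}\phi(U_n)$, where the hypothesis $p^Tr=q^Tr$ and the relation $V_p^*V_p=1$ do all the work. The only points needing care are verifying that $V_p$ and $U_nV_p^*$ lie in the prescribed dense $*$-subspace of analytic elements so that the KMS identity applies, the routine density argument passing from the characters $g_n$ to arbitrary $f$, and the factorisation $\iota(f)=\iota(\sqrt f)^*\iota(\sqrt f)$, which uses only that $\iota$ is a $*$-homomorphism and that $\sqrt f$ is a real continuous function.
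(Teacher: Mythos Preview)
Your proof is correct and follows essentially the same approach as the paper: the KMS relation plus $V_p^*V_p=1$ for part~(\ref{it:q->p}), and Cauchy--Schwarz with the factorisation $\iota(f)=\iota(\sqrt f)^*\iota(\sqrt f)$ for part~(\ref{it:cs estimate}). The only cosmetic difference is that for~(\ref{it:q->p}) you derive the explicit formula $\phi(V_pU_nV_p^*)=e^{-\beta p^Tr}\phi(U_n)$ directly, whereas the paper inserts $V_q^*V_q$ and cycles $V_qV_p^*$ around; your route is marginally more direct.
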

\begin{proof}
For (\ref{it:q->p}),  since $V_q$ is an isometry,  we have
\[
\phi(V_p U_n V^*_p)= \phi\big(V_p U_n(V_q^* V_q)V^*_p\big)= \phi\big((V_p U_n V_q^*)( V_q V^*_p)\big),
\]
and since $p^Tr = q^Tr$ the KMS condition gives
\[
\phi(V_p U_n V^*_p)= e^{- \beta (p-q)^Tr }\phi\big(V_q V^*_p(V_p U_n V^*_q)\big)= \phi(V_q U_n V^*_q).
\]

For (\ref{it:cs estimate}), we take a positive function $f$ in $C(\ESS^d)$.  By linearity
and continuity,  part~(\ref{it:q->p}) implies that $\phi(V_q\,\iota( f )V^*_q) = \phi(V_p
\,\iota( f ) V^*_p)$.  Using the Cauchy--Schwarz inequality at the second step, we
calculate:
\begin{align*}
|\phi(V_p\,\iota( f )V^*_q)|^2
& = \big|\phi\big((V_p \,\iota(\sqrt{f})) (V_q\,\iota(\sqrt{f}))^*\big)\big|^2
\\
&\le \phi(V_p \,\iota( f ) V^*_p) \phi(V_q \,\iota( f ) V^*_q)
\\
&=\phi(V_p \,\iota( f ) V^*_p)^2.
\end{align*}
Since both sides are the squares of non-negative numbers, we can take square roots, and
we retrieve (\ref{it:cs estimate}).
\end{proof}

\begin{lemma}\label{lem:flipflop}
Suppose that $\beta>0$ and $\phi$ is a KMS$_\beta$ state of $(B_\theta,\alpha^r)$.
Suppose that $p,q \in \NN^k$ satisfy $p^Tr=q^Tr$ and that $f \in C(\ESS^d)$. Write $P:=(p
\vee q)-p$. Then
\begin{align}\label{eq:flipflop}
\phi\big(V_p \,\iota(f) V^*_q\big) = \phi\big(V_{p+lP}  \,\iota( f \circ R_{l\theta^TP}) V_{q + lP}^*\big)\quad\text{for all  $l \in \NN$}.
\end{align}
If $p \not= q$, then $\phi(V_p \,\iota(f) V^*_q) = 0$.
\end{lemma}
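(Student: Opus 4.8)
The plan is to prove the displayed identity \eqref{eq:flipflop} first for $l=1$ and then bootstrap to all $l$ by induction. For the inductive step one applies the $l=1$ case to the pair $(p+lP,q+lP)$: this pair still satisfies $(p+lP)^Tr=(q+lP)^Tr$, and because $\vee$ commutes with translation on $\NN^k$ its associated difference vector $\big((p+lP)\vee(q+lP)\big)-(p+lP)$ is again exactly $P$, so the $l=1$ case replaces $f\circ R_{l\theta^TP}$ by $(f\circ R_{l\theta^TP})\circ R_{\theta^TP}=f\circ R_{(l+1)\theta^TP}$, and combining with the inductive hypothesis gives the case $l+1$.

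For the case $l=1$ the key move is to rewrite the right-hand side of \eqref{eq:flipflop}. Since $p+P=p\vee q$ and $p\mapsto V_p$ is a semigroup homomorphism, $V_{p+P}=V_pV_P$ and $V_{q+P}=V_qV_P$; using Lemma~\ref{lem:action on fns} to pull $V_P$ past $\iota(f\circ R_{\theta^TP})$ (the shifts $R_{\theta^TP}$ and $R_{-\theta^TP}$ cancel) gives
\[
V_{p+P}\,\iota\big(f\circ R_{\theta^TP}\big)\,V_{q+P}^* = V_p\,\iota(f)\,V_PV_P^*\,V_q^*.
\]
So it suffices to show $\phi\big(V_p\,\iota(f)\,(1-V_PV_P^*)\,V_q^*\big)=0$. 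Write $E:=1-V_PV_P^*$; both $\iota(f)$ and $E$ are fixed by $\alpha^r$, so $V_p\iota(f)E$ is analytic with $\alpha^r_{i\beta}(V_p\iota(f)E)=e^{-\beta p^Tr}V_p\iota(f)E$, and the KMS$_\beta$ condition yields
\[
\phi\big(V_p\,\iota(f)\,E\,V_q^*\big)=e^{-\beta p^Tr}\,\phi\big(V_q^*V_p\,\iota(f)\,E\big).
\]
Now Nica covariance gives $V_q^*V_p=V_QV_P^*$ with $Q:=(p\vee q)-q$, and a second use of Lemma~\ref{lem:action on fns} moves $\iota(f)$ to the left of $V_P^*$, so the right-hand side is a multiple of $\phi\big(V_Q\,\iota(f\circ R_{\theta^TP})\,V_P^*E\big)$. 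Since $V_P^*E=V_P^*-V_P^*=0$, this is $0$, which is what we wanted.

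For the vanishing statement, suppose first $P\neq 0$; since $r$ has strictly positive entries, $P^Tr>0$. Applying the Cauchy--Schwarz inequality $|\phi(xy^*)|^2\le\phi(xx^*)\phi(yy^*)$ with $x=V_{p+lP}\iota(g)$, $y=V_{q+lP}$ and $g:=f\circ R_{l\theta^TP}$ (so $\|g\|_\infty=\|f\|_\infty$) to \eqref{eq:flipflop} gives
\[
\big|\phi\big(V_p\iota(f)V_q^*\big)\big|^2 \le \phi\big(V_{p+lP}\,\iota(|g|^2)\,V_{p+lP}^*\big)\,\phi\big(V_{q+lP}V_{q+lP}^*\big).
\]
Because $0\le\iota(|g|^2)\le\|f\|_\infty^2\cdot 1$ and, by a one-line KMS computation, $\phi(V_sV_s^*)=e^{-\beta s^Tr}$ for all $s\in\NN^k$, the right-hand side is at most $\|f\|_\infty^2\,e^{-\beta(p+lP)^Tr}e^{-\beta(q+lP)^Tr}$, which tends to $0$ as $l\to\infty$. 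If instead $P=0$, then $p\ge q$ together with $p\neq q$ forces $p-q\neq 0$, and I would run the same argument on the conjugate $\overline{\phi(V_p\iota(f)V_q^*)}=\phi\big(V_q\,\iota(\bar f)\,V_p^*\big)$, whose relevant difference vector is $(q\vee p)-q=p-q\neq 0$.

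The one genuinely delicate point is the first identity in the $l=1$ step, together with the realisation that the error term $\phi\big(V_p\iota(f)(1-V_PV_P^*)V_q^*\big)$ is annihilated by the KMS condition: one must keep the $\alpha^r$-fixed factors adjacent so that, after transporting $V_q^*$ to the front, the defect projection $1-V_PV_P^*$ meets $V_P^*$ and disappears. Getting the two applications of Lemma~\ref{lem:action on fns} — in particular the sign of $\theta^TP$ — consistent is where care is needed; the rest is bookkeeping.
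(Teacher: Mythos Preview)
Your proof is correct and uses the same ingredients as the paper's --- the KMS condition, Nica covariance, and Lemma~\ref{lem:action on fns} for the identity, and Cauchy--Schwarz plus a KMS computation for the vanishing --- but the organisation differs in two places worth noting.

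For \eqref{eq:flipflop}, you prove the case $l=1$ by rewriting the right-hand side as $V_p\,\iota(f)\,V_PV_P^*\,V_q^*$ and then showing that the defect $\phi\big(V_p\,\iota(f)(1-V_PV_P^*)V_q^*\big)$ vanishes: after one KMS flip and Nica covariance the factor $V_P^*(1-V_PV_P^*)=0$ kills everything. The paper instead runs the induction directly: it inserts the $\alpha^r$-fixed projection $V_{q+lP}V_{q+lP}^*$, cycles it to the front via KMS, and then uses Nica covariance and Lemma~\ref{lem:action on fns} to arrive at the $(l+1)$-expression. Your version isolates the vanishing of the error term cleanly; the paper's version avoids introducing that term at all. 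Both are short.

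For the vanishing when $p\neq q$, you apply Cauchy--Schwarz directly to $x=V_{p+lP}\,\iota(g)$ and $y=V_{q+lP}$, obtaining a bound with two exponentially decaying factors; the paper instead invokes its Lemma~\ref{lem:CS}(\ref{it:cs estimate}) (which packages the same Cauchy--Schwarz step but is stated only for positive $f$) and then a single KMS estimate. Your direct argument has the minor advantage that it does not require reducing to positive $f$.
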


\begin{proof}
We prove~\eqref{eq:flipflop} by induction on $l$. The base case $l = 0$ is trivial. Now
suppose that~\eqref{eq:flipflop} holds for $l \ge 0$. The inductive hypothesis gives
\begin{align*}
\phi(V_p\,\iota(f) V^*_q)
&= \phi\big(V_{p+lP} \,\iota(f \circ R_{l\theta^TP}) V^*_{q+lP}\big)\\
&=\phi\big(V_{p + lP} \,\iota(f \circ R_{l\theta^TP}) V^*_{q + lP}V_{q + lP}V^*_{q + lP}\big).
\end{align*}
Since the dynamics $\alpha^r$ fixes the element $V_{q + lP}V^*_{q + lP}$, the KMS
condition implies that
\[
\phi(V_p\iota(f)V_q^*)=\phi\big(V_{q + lP} V^*_{q + lP} V_{p+lP} \,\iota(f \circ R_{l\theta^TP}) V^*_{q + lP}\big),
\]
and Nica covariance gives
\begin{align*}
\phi(V_p&\,\iota(f) V^*_q)\\
&= \phi\big(V_{q + lP} V_{((q + lP) \vee(p + lP)) - (q+lP)} V^*_{((q + lP) \vee(p + lP)) - (p + lP)} \,\iota(f \circ R_{l\theta^TP}) V^*_{q + lP}\big).
\end{align*}
For $c\in \NN^k$ we have $(p+c)\vee (q+c)=(p\vee q)+c$. Thus
	\begin{align*} \phi(V_p\,\iota(f) V^*_q)	
    &= \phi\big(V_{q + lP} V_{(p \vee q) - q} V^*_{(p \vee q) - p} \,\iota(f \circ R_{l\theta^TP}) V^*_{q + lP}\big)\\
    &= \phi\big(V_{(p \vee q) + lP} V^*_P \,\iota(f \circ R_{l\theta^TP}) V^*_{q + lP}\big)\\
&= \phi\big(V_{(p \vee q) + lP} \,\iota(f \circ R_{l\theta^TP}\circ R_{\theta^T P}) V^*_{q + (l+1)P}\big)\quad\text{by  Lemma~\ref{lem:action on fns}}\\
    &= \phi\big(V_{p + (l+1)P} \,\iota(f \circ R_{(l+1)\theta^TP}) V^*_{q + (l+1)P}\big)
\end{align*}
because $(p \vee q) + lP=p+(l+1)P$. This completes the inductive step, and hence the
proof of~\eqref{eq:flipflop}.

Now suppose that $p\neq q$. Then at least one of $P$ and $(p\vee q) - q$ is nonzero. We
argue the case where $P \not= 0$, and the other case follows by taking adjoints. For
$l\in\NN$ we have
\begin{align*}
|\phi(V_p\,\iota( f) V_q^*)|
&= \big|\phi\big(V_{p + lP} \,\iota(f \circ R_{l\theta^TP}) V^*_{q + lP}\big)\big|\\
&\le \phi\big(V_{p+lP} \,\iota(f \circ R_{l\theta^TP}) V^*_{p + lP}\big) \quad\text{by Lemma~\ref{lem:CS}(\ref{it:cs estimate})}\\
&= e^{ -\beta(p+lP)^Tr} \phi\big(V^*_{p + lP}V_{p + lP}\iota(f \circ R_{l\theta^TP})\big)\\
&= e^{ -\beta (p+lP)^Tr} \phi\big(\iota(f \circ R_{l\theta^TP})\big)\\
&\leq  e^{ -\beta (p+lP)^Tr}\|f\|_{\infty}.
\end{align*}
Since $P > 0$ and $r \in (0,\infty)^k$, we have $(p + lP)^Tr \to \infty$ as $l \to
\infty$, and hence $e^{-\beta(p+lP)^Tr} \|f\|_{\infty} \to 0$ as $l \to \infty$. Thus
$\phi(V_p\,\iota( f) V_q^*)=0$.
\end{proof}

\begin{proof}[Proof of Proposition~\ref{prop:KMS formula}]
First suppose that $\phi$ is a KMS$_\beta$ state for $(B_\theta,\alpha^r)$. For $n \in
\ZZ^d$ and $p,q \in \NN^k$, two applications of the KMS condition give
\begin{equation}\label{eq:2flips}
\phi(V_p U_n V^*_q)
    = e^{-\beta p^T r} \phi(U_n V^*_q V_p)
    = e^{-\beta(p-q)^T r } \phi(V_p U_n V^*_q).
\end{equation}
It follows immediately that if $(p-q)^Tr \not= 0$, then $\phi(V_p U_n V^*_q) = 0$. If
$(p-q)^Tr = 0$ but $p \not= q$, then Lemma~\ref{lem:flipflop} gives $\phi(V_p U_n V^*_q)
= 0$. This combined with the first equality in~\eqref{eq:2flips} gives
\[\phi(V_p U_n V^*_q) = \delta_{p,q} e^{-\beta p^T r}
\phi(U_n V^*_q V_p)=\delta_{p,q} e^{-\beta p^T r}
\phi(U_n)
\]
because $V_p$ is an isometry. This is the desired formula~\eqref{eq:KMS char}.

Now suppose that $\phi$ is a state satisfying~\eqref{eq:KMS char}. Since the $V_p
U_nV^*_q$ are analytic elements spanning a dense $\alpha^r$-invariant subspace of
$B_\theta$, it suffices to fix $p,q,b,c \in \NN^k$ and $n,n' \in \ZZ^d$, and show that
\begin{equation}\label{eq:KMS to show}
\phi(V_p U_n V^*_q V_b U_{n'} V^*_c) = e^{-\beta(p-q)^T r} \phi(V_b U_{n'} V^*_c V_p U_n V^*_q).
\end{equation}
Let $P := (q \vee b) - b$ and $Q := (q \vee b) - q$. Then $P,Q \in \NN^k$ are the unique
elements such that $P \wedge Q=0$ and $ P+b= Q +q$, and Nica covariance says that
$V_q^*V_b= V_QV_P^*$. Now we calculate,  using first the identities
\eqref{relation:V-U}~and~\eqref{relation:V*-U}, and then (at the last step) the
assumption~\eqref{eq:KMS char}:
\begin{align}
\phi(V_p U_n V^*_q V_b U_{n'} V^*_c)
&= \phi(V_p U_n V_Q V^*_P U_{n'} V^*_c) \notag\\
&= e^{2\pi iQ^T\theta n} \phi(V_p(V_{Q}U_{n}) V^*_P U_{n'} V^*_c)\notag\\
&= e^{2\pi i (Q^T\theta n + P^T \theta n')} \phi(V_{Q+p} U_{n+n'} V^*_{P+c})\notag\\
&= \delta_{Q+p, P+c} e^{-\beta(Q+p)^Tr } e^{2\pi i  (Q^T\theta n + P^T \theta n')} \phi(U_{n+n'}).\label{eq:LHS}
\end{align}

Similarly, let $M := (c \vee p) - p$ and  $N := (c \vee p) - c$. Then $M,N \in \NN^k$ are
the unique elements such that $M \wedge N=0$ and $M+p=N+c$, and the right-hand side
of~\eqref{eq:KMS to show} is
\begin{align}
    e^{-\beta(p-q)^T r} &\phi(V_b U_{n'} V_N V^*_M U_n V^*_q)\nonumber\\
    &= e^{-\beta(p-q)^T r} e^{2\pi i  (N^T\theta {n'} + M^T\theta n)} \phi(V_{b+N} U_{n+n'} V^*_{q+M})\nonumber\\
    &= \delta_{N+b, M+q} e^{- \beta(p-q+b+N)^Tr} e^{2\pi i(N^T\theta {n'} + M^T\theta n)} \phi(U_{n+n'}).\label{eq:RHS}
\end{align}
To see  that~\eqref{eq:LHS} is equal to~\eqref{eq:RHS}, we first show that the two
Kronecker deltas have the same value. For this, observe that by definition of $M,N,P,Q$,
we have
\[
( P+b) + (N+c) = (Q+q) + (M+p),
\]
and consequently $(N+b) -( M+q) =  ( Q+p)-( P+c)$. Thus $\delta_{Q+p, P+c} = 1$ if and
only if $\delta_{N+b, M+q} = 1$. So it now suffices to prove that~\eqref{eq:LHS}
equals~\eqref{eq:RHS} when $Q+p = P+c$ and $ N+b =  M+q$.

We first claim that $M=Q$ and $N=P$. By assumption, we have $  M+q=N+b$, and we have
$P+b=Q+q$ by definition of $P,Q$. Subtracting these equations, we obtain $M-Q=N-P$, and
rearranging gives $M-N=Q-P$. Since $P \wedge Q=0$ and $M \wedge N=0$, we deduce that
$Q=(Q-P)\vee 0=(M-N)\vee 0 =M$, and then $P=N$ too, as claimed.

We now have
\[
e^{2\pi i (Q^T\theta n + P^T \theta n')} = e^{2\pi i (M^T\theta n+N^T\theta n')},
\]
and so it remains to check that
\[
e^{- \beta (p-q+b+N)^T r} = e^{-  \beta(p+Q)^T r}.
\]
For this, we apply $N=P$, from above, at the second equality and  $b + P = q + Q$, by
definition of $Q,P$, at the third  to get
\begin{align*}
(p - q) + (b + N)&= p + (b + N - q)= p + (b + P - q)\\
    &= p + (q + Q - q)= p + Q,
\end{align*}
which gives the result. Thus $\phi$ is a KMS$_\beta$ state.
\end{proof}

\begin{lemma}\label{power series of Rs}
Write $\theta_j$ for the $j$th row of $\theta$. Then the series
\begin{equation}\label{opvaluedsum}
\sum_{p\in\NN^k} e^{-\beta p^Tr}R_{\theta^Tp*}
\end{equation}
converges in the operator norm of $B(C(\ESS^d))$ to an inverse for $\prod^k_{j=1} (\id -
e^{- \beta r_j}R_{\theta^T_j*})$.
\end{lemma}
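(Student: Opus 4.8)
The plan is to recognise \eqref{opvaluedsum} as an absolutely convergent series in the Banach algebra $B(C(\ESS^d))$ and to rearrange it into a product, over $j=1,\dots,k$, of Neumann (geometric) series. First I would record the semigroup structure of the operators involved: since $R_y\circ R_z=R_{y+z}$ on $\ESS^d$ and addition in $\RR^d$ is commutative, the maps $R_{y*}$ satisfy $R_{y*}R_{z*}=R_{(y+z)*}$ and $R_{0*}=\id$, so $p\mapsto R_{\theta^Tp*}$ is a homomorphism of the semigroup $(\NN^k,+)$. Writing $\theta_j$ for the $j$th row of $\theta$ and using $\theta^Tp=\sum_{j}p_j\theta_j^T$, this gives
\[
R_{\theta^Tp*}=\prod_{j=1}^kR_{\theta_j^T*}^{\,p_j}\qquad(p\in\NN^k),
\]
and all of these operators commute with one another. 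Since each $R_{y*}$ is norm-preserving, $\big\|e^{-\beta p^Tr}R_{\theta^Tp*}\big\|=e^{-\beta p^Tr}$.

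Next I would establish absolute convergence. Because $r\in(0,\infty)^k$ and $\beta>0$,
\[
\sum_{p\in\NN^k}\big\|e^{-\beta p^Tr}R_{\theta^Tp*}\big\|=\sum_{p\in\NN^k}e^{-\beta p^Tr}=\prod_{j=1}^k\sum_{n=0}^\infty e^{-n\beta r_j}=\prod_{j=1}^k\frac1{1-e^{-\beta r_j}}<\infty,
\]
so \eqref{opvaluedsum} converges in operator norm to some $S\in B(C(\ESS^d))$ and may be summed in any order. Summing over the coordinates of $p$ one at a time and using the factorisation above, I would rewrite
\[
S=\sum_{p\in\NN^k}\prod_{j=1}^k\big(e^{-\beta r_j}R_{\theta_j^T*}\big)^{p_j}=\prod_{j=1}^k\Big(\sum_{n=0}^\infty\big(e^{-\beta r_j}R_{\theta_j^T*}\big)^n\Big),
\]
where the reordering is the usual Cauchy-product argument for absolutely convergent series in a Banach space, applied one coordinate at a time, and is legitimate because the factors for distinct $j$ commute and each one-variable series converges absolutely.

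Finally, since $\big\|e^{-\beta r_j}R_{\theta_j^T*}\big\|=e^{-\beta r_j}<1$, the $j$th factor above is exactly the Neumann series for $(\id-e^{-\beta r_j}R_{\theta_j^T*})^{-1}$; and as the operators $\id-e^{-\beta r_j}R_{\theta_j^T*}$ ($1\le j\le k$) commute, the product of their inverses is a two-sided inverse for $\prod_{j=1}^k(\id-e^{-\beta r_j}R_{\theta_j^T*})$, which is the assertion of the lemma. The only step that needs genuine care is the rearrangement of the $\NN^k$-indexed sum into an iterated product of geometric series, but this is routine precisely because the translation operators $R_{\theta_j^T*}$ all commute and each coordinate series is absolutely convergent, so I do not expect a real obstacle.
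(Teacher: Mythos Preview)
Your proposal is correct and follows essentially the same route as the paper: establish absolute convergence via $\sum_p e^{-\beta p^Tr}=\prod_j(1-e^{-\beta r_j})^{-1}$, rearrange the $\NN^k$-indexed sum into a product of one-variable geometric series, and identify each factor as the Neumann inverse of $\id-e^{-\beta r_j}R_{\theta_j^T*}$. The only cosmetic difference is that the paper phrases the rearrangement step as an application of Fubini's theorem for Banach-space-valued functions with respect to counting measure on $\NN^k$, whereas you invoke the Cauchy-product argument directly; both justifications are standard and equivalent here.
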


\begin{proof}
We first need to understand the sum~\eqref{opvaluedsum}, which we want to calculate as an
iterated sum. So we interpret~\eqref{opvaluedsum} as a $B(C(\ESS^d))$-valued integral over
$\NN^k$ with respect to counting measure $\sigma$ (for which all functions on $\NN^k$ are
measurable). Since each $R_{\theta^Tp}$ is norm-preserving, we have
\[
\big\|e^{-\beta p^Tr}R_{\theta^Tp*}\big\|=e^{-\beta p^Tr}=\prod_{j=1}^ke^{-\beta p_jr_j}.
\]
By Tonelli's theorem, we have
\begin{align*}
\sum_{p\in\NN^k}\big\|e^{-\beta p^Tr}R_{\theta^T_jp*}\big\|
&=\sum_{p_k=0}^\infty\cdots\sum_{p_1=0}^\infty\Big(\prod_{j=1}^ke^{-\beta p_jr_j}\Big)\\
&=\sum_{p_k=0}^\infty\cdots\sum_{p_2=0}^\infty\Big(\prod_{j=2}^ke^{-\beta p_jr_j}\Big)\Big(\sum_{p_1=0}^\infty e^{-\beta p_1r_1}\Big)\\
&=\sum_{p_k=0}^\infty\cdots\sum_{p_2=0}^\infty\Big(\prod_{j=2}^ke^{-\beta p_jr_j}\Big)(1-e^{-\beta r_1})^{-1}.
\end{align*}
Repeating this $k-1$ more times gives
\[
\sum_{p\in\NN^k}\big\|e^{-\beta p^Tr}R_{\theta^T_jp*}\big\| = \prod_{j=1}^k(1-e^{-\beta r_j})^{-1}.
\]
Thus the function $p\mapsto e^{-\beta p^Tr}R_{\theta^T_jp*}$ is integrable with respect
to $\sigma$, and Fubini's theorem for functions with values in a Banach space (for
example, \cite[Theorem~II.16.3]{FD}) implies that
\begin{align*}
\sum_{p\in\NN^k} e^{-\beta p^Tr}R_{\theta^Tp*}&=\sum_{p_k=0}^\infty\cdots\sum_{p_1=0}^\infty\Big(\prod_{j=1}^k e^{-\beta p_jr_j}R_{p_j\theta_j^T*}\Big)\\
&=\prod_{j=1}^k\Big(\sum_{p_j=0}^\infty\big(e^{-\beta r_j}R_{\theta_j^T*}\big)^{p_j}\Big).
\end{align*}
Writing the infinite sum as a limit of partial sums shows that
\begin{equation}\label{inverseforj}
\sum_{p_j=0}^\infty\big(e^{-\beta r_j}R_{\theta_j^T*}\big)^{p_j}\big(\id-
e^{-\beta r_j}R_{\theta_j^T*}\big)=\id.
\end{equation}
To simplify the product
\[
\bigg(\prod_{j=1}^k\Big(\sum_{p_j=0}^\infty\big(e^{-\beta r_j}R_{\theta^T*}\big)^{p_j}\Big)\bigg)\Big(\prod_{j=1}^k\big(\id-
e^{-\beta r_j}R_{\theta_j^T*}\big)\Big),
\]
we write the left-hand product from $j=k$ to $j=1$, and the right-hand one from $j=1$ to
$j=k$. Now $k$ applications of~\eqref{inverseforj} show that the product telescopes to
the identity $\id$ of $B(C(\ESS^d))$.
\end{proof}

The next proposition is an analogue of \cite[Theorem~6.1]{aHLRS} and
\cite[Theorem~6.1]{AaHR2}.

\begin{prop}\label{prp:KMS char}
Fix $\beta\in(0,\infty)$.
\begin{enumerate}
\item\label{it:subinvariance} Suppose that $\phi$ is a KMS$_\beta$ state for
    $(B_\theta,\alpha^r)$, and let $\nu \in P(\ESS^d)$ be the measure such that
\[
\phi(\iota(f)) =\int_{\ESS^d} f\,d\nu\quad\text{for $f \in C(\ESS^d)$.}
\]
Suppose that $F \subset \NN^k$ is a finite set such that $p\neq q\in F$ implies
$p\wedge q=0$. Then the measure $\nu$ satisfies the \emph{subinvariance relation}
\begin{equation}\label{eq:subinv}
        \prod_{p \in F} \big(\id - e^{-\beta p^Tr} R_{\theta^Tp*}\big)(\nu) \ge 0.
    \end{equation}

\item\label{it:phi-mu} Define $y_\beta:=\sum_{p\in \NN^k}e^{-\beta p^Tr}$, and
    suppose that $\kappa$ is a positive measure on $\ESS^d$ with total mass
    $y_\beta^{-1}$. Write $\theta_j$ for the $j$th row of $\theta$. Then
\[
\nu=\nu_\kappa:=\prod_{j=1}^k\big(\id-e^{-\beta r_j}R_{\theta_j*}\big)^{-1}(\kappa)
\]
is a subinvariant probability measure, and  there is a KMS$_\beta$ state $\phi_\nu$
of $(B_\theta, \alpha^r)$ such that
\begin{align}\label{KMS from mu m}
\phi_\nu\big(V_p\,\iota(f)V^*_q\big)=\delta_{p,q}e^{-\beta p^Tr} \int_{\ESS^d}f\,d\nu\quad\text{for $p,q\in\NN^k$ and $f\in C(\ESS^d)$.}
\end{align}

\item\label{it:isomorphism} The map $\kappa\mapsto \phi_{\nu_\kappa}$ is an affine
    isomorphism  of the simplex
\[
\Sigma_{\beta,r}=\big\{\text{positive measures } \kappa:\|\kappa\|=y_\beta^{-1}\big\}
\]
onto the simplex of KMS$_\beta$ states of $(B_\theta,\alpha^r)$.
\end{enumerate}
\end{prop}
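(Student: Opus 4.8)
The plan is to prove the three parts of Proposition~\ref{prp:KMS char} in order, leaning heavily on Proposition~\ref{prop:KMS formula} and Lemma~\ref{power series of Rs}.

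\medskip

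\noindent\textbf{Part (\ref{it:subinvariance}): the subinvariance relation.}
First I would fix a KMS$_\beta$ state $\phi$ and note that $\nu$ is indeed a probability measure because $\phi(\iota(1)) = \phi(1) = 1$. For a finite set $F$ with pairwise-orthogonal elements, I want to show $\prod_{p\in F}(\id - e^{-\beta p^Tr}R_{\theta^Tp*})(\nu)\ge 0$ as a functional on $C(\ESS^d)$, i.e.\ that it is nonnegative on positive $f$. The key idea: expand the product over subsets $G\subseteq F$, giving $\sum_{G\subseteq F}(-1)^{|G|}e^{-\beta (\sum_{p\in G}p)^Tr}R_{\theta^T(\sum_{p\in G}p)*}(\nu)$. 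Using Lemma~\ref{lem:action on fns} one has, for $P := \sum_{p\in G} p$ (noting the $p$'s are pairwise orthogonal so $P = \bigvee_{p\in G} p$), the identity $\int R^*_{\theta^TP}(f)\,d\nu = \phi(\iota(f\circ R_{\theta^TP})) = \phi(V_P^*\iota(f)V_P)$ after moving $\iota$ past $V_P^*$. Then the KMS condition converts $e^{-\beta P^Tr}\phi(V_P^*\iota(f)V_P)$ into $\phi(\iota(f)V_PV_P^*) = \phi(V_PV_P^*\,\iota(f))$ (using that $V_PV_P^*$ commutes with $\iota(f)$, as noted after Lemma~\ref{lem:action on fns}). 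So each term of the expansion becomes $(-1)^{|G|}\phi(V_PV_P^*\,\iota(f))$, and by Nica covariance $V_PV_P^* = \prod_{p\in G}V_pV_p^*$ (the factors commute since the $p$'s are orthogonal). Hence the whole sum equals $\phi\big(\prod_{p\in F}(1 - V_pV_p^*)\,\iota(f)\big)$. Since $\{V_pV_p^*:p\in F\}$ is a commuting family of projections, $\prod_{p\in F}(1-V_pV_p^*)$ is itself a projection $E$; for positive $f$, $\sqrt{f}$ makes sense and $E\,\iota(f)\,E = (E\iota(\sqrt f))(E\iota(\sqrt f))^*\ge 0$, and $E$ commutes with $\iota(f)$, so $\prod_{p\in F}(1-V_pV_p^*)\iota(f) = E\iota(f)E\ge 0$, whence $\phi$ of it is $\ge 0$. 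This gives~\eqref{eq:subinv}.

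\medskip

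\noindent\textbf{Part (\ref{it:phi-mu}): constructing KMS states.}
Given $\kappa\ge 0$ with $\|\kappa\| = y_\beta^{-1}$, set $\nu_\kappa := \prod_{j=1}^k(\id - e^{-\beta r_j}R_{\theta_j*})^{-1}(\kappa)$, which is well-defined by Lemma~\ref{power series of Rs} (taking adjoints of the operators there, since $R_{y*}$ is the dual of $R_y^*$; note a sign/transpose bookkeeping point — the lemma uses $R_{\theta^T_j*}$, and I should check the statement is using the dual action consistently). Lemma~\ref{power series of Rs} gives $\nu_\kappa = \sum_{p\in\NN^k}e^{-\beta p^Tr}R_{\theta^Tp*}(\kappa)$, which is a sum of positive measures hence positive, with total mass $\big(\sum_p e^{-\beta p^Tr}\big)\|\kappa\| = y_\beta\cdot y_\beta^{-1} = 1$, so $\nu_\kappa\in P(\ESS^d)$. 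That it is subinvariant follows from Part (\ref{it:subinvariance}) once we know it comes from a KMS state — or more directly: applying $\prod_{p\in F}(\id - e^{-\beta p^Tr}R_{\theta^Tp*})$ to $\nu_\kappa$ and using that this operator commutes with the (larger, over all of $\NN^k$) resolvent sum, one reorganizes the resulting sum over $\NN^k$ into a nonnegative combination of $R$-pushforwards of $\kappa$; I expect this to work cleanly by the same subset-expansion/Nica-covariance combinatorics, but at the level of the abelian semigroup. Then to build $\phi_\nu$: define it on the dense spanning set by~\eqref{KMS from mu m}, $\phi_\nu(V_p\iota(f)V_q^*) := \delta_{p,q}e^{-\beta p^Tr}\int f\,d\nu$. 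The work is to check this is well-defined and positive; the standard route is to realize $\phi_\nu$ as a vector state (or a sum of vector states) in a concrete representation — induce from the state $f\mapsto \int f\,d\nu$ on $C(\ESS^d)$ up to $B_\theta$, using the conditional-expectation-type structure coming from the Fell-bundle/crossed-product grading, or build the GNS-type representation on $\ell^2(\NN^k)\otimes L^2(\ESS^d,\nu')$ for a suitable auxiliary measure. Once positivity is in hand, the KMS$_\beta$ property is immediate from Proposition~\ref{prop:KMS formula}, since~\eqref{KMS from mu m} with $f = g_n$ is exactly~\eqref{eq:KMS char}.

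\medskip

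\noindent\textbf{Part (\ref{it:isomorphism}): the isomorphism of simplices.}
The map $\kappa\mapsto\phi_{\nu_\kappa}$ is affine because $\kappa\mapsto\nu_\kappa$ is linear (it is an operator applied to $\kappa$) and $\nu\mapsto\phi_\nu$ is affine by the formula~\eqref{KMS from mu m}. Injectivity: if $\phi_{\nu_\kappa} = \phi_{\nu_{\kappa'}}$ then evaluating on $\iota(f)$ gives $\nu_\kappa = \nu_{\kappa'}$, and since the operator $\prod_j(\id - e^{-\beta r_j}R_{\theta_j*})$ is invertible we recover $\kappa = \kappa'$. Surjectivity: given any KMS$_\beta$ state $\phi$, by Proposition~\ref{prop:KMS formula} it is determined by $\phi(U_n) = \int g_n\,d\nu$ for its measure $\nu\in P(\ESS^d)$; set $\kappa := \prod_j(\id - e^{-\beta r_j}R_{\theta_j*})(\nu)$. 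Part~(\ref{it:subinvariance}), applied with the singleton sets $F = \{r_j e_j\}$... more precisely with nested choices, or directly with $F$ ranging over subsets of $\{e_1,\dots,e_k\}$ (scaled appropriately) — shows $\kappa\ge 0$; and $\|\kappa\| = \|\nu\| - (\text{correction})$... I need $\|\kappa\| = y_\beta^{-1}$, which should come from evaluating the defining relation on the constant function $1$: since $R_{y*}$ preserves total mass, $\|\prod_j(\id - e^{-\beta r_j}R_{\theta_j*})(\nu)\|$... this does \emph{not} obviously equal $y_\beta^{-1}\|\nu\|$ by a one-line argument, so the honest computation is $\kappa(\ESS^d) = \sum_{G\subseteq\{1,\dots,k\}}(-1)^{|G|}e^{-\beta\sum_{j\in G}r_j} = \prod_{j=1}^k(1 - e^{-\beta r_j}) = \big(\sum_{p\in\NN^k}e^{-\beta p^Tr}\big)^{-1} = y_\beta^{-1}$, using the geometric-series factorization. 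Then $\nu_\kappa = \nu$ by inverting the operator, so $\phi_{\nu_\kappa} = \phi_\nu = \phi$ (both sides agree on the spanning set by~\eqref{KMS from mu m} and~\eqref{eq:KMS char}).

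\medskip

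\noindent I expect the main obstacle to be the positivity of $\phi_\nu$ in Part~(\ref{it:phi-mu}): writing down the formula is easy, but proving it defines an honest state requires a concrete model of $B_\theta$ (a representation on Hilbert space adapted to the Fock-space structure of the Nica-covariant representation $V$ twisted by $\nu$), and one must check the formula computes correctly in that model. The subset-expansion combinatorics in Part~(\ref{it:subinvariance}) and the mass computation in Part~(\ref{it:isomorphism}) are the other places where care is needed, but they are essentially bookkeeping with the Nica covariance relation $V_p^*V_q = V_{(p\vee q)-p}V_{(p\vee q)-q}^*$ and the geometric series.
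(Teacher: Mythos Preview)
Your proposal is correct and follows essentially the same route as the paper. In Part~(\ref{it:subinvariance}) both you and the paper expand over subsets of $F$, use Lemma~\ref{lem:action on fns} and the KMS condition to convert each term to $\phi(V_{p_S}V_{p_S}^*\iota(f))$, and then invoke Nica covariance; the only cosmetic difference is that you conclude positivity via the commutation of the projection $E=\prod_{p\in F}(1-V_pV_p^*)$ with $\iota(f)$, whereas the paper applies the KMS condition once more to rewrite $\phi(E\iota(f))=\phi(E\iota(f)E)$. In Part~(\ref{it:phi-mu}) you correctly anticipate the concrete model: the paper builds exactly the representation on $\ell^2(\NN^k)\otimes L^2(\ESS^d,\kappa)$ that you sketch, defines $\phi_\nu$ as the weighted sum of vector states at $\delta_p\otimes 1$, and then invokes Proposition~\ref{prop:KMS formula}; note that the paper handles the ``subinvariant'' claim in one line by observing that $\prod_j(\id-e^{-\beta r_j}R_{\theta_j*})(\nu_\kappa)=\kappa\ge 0$, so your more elaborate combinatorial alternative is unnecessary. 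In Part~(\ref{it:isomorphism}) your argument matches the paper's, and you in fact fill in two points the paper leaves implicit: that $\kappa\ge 0$ (from Part~(\ref{it:subinvariance}) with $F=\{e_1,\dots,e_k\}$) and that $\|\kappa\|=\prod_j(1-e^{-\beta r_j})=y_\beta^{-1}$.
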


\begin{proof}
(\ref{it:subinvariance}) We take a positive function $f \in C(\ESS^d)_+$ and compute
\begin{align}\label{expandprod}
\int_{\ESS^d} f\,d\Big(\prod_{p \in F}(\id - &e^{-\beta p^T r} R_{\theta^T p*})  (\nu)\Big)
= \int_{\ESS^d} f\circ\Big(\prod_{p \in F}(\id - e^{-\beta p^T r} R_{\theta^T p})\Big)\,d\nu\\
&=\int_{\ESS^d}\sum_{S\subset F} (-1)^{|S|}\Big(\prod_{p\in S}e^{-\beta p^Tr}\Big)\Big(f\circ \prod_{p\in S}R_{\theta^Tp}\Big)\,d\nu.\notag
\end{align}
We write $p_S:=\sum_{p\in S}p$, and observe that $\prod_{p\in S}e^{-\beta p^Tr}=e^{-\beta
p_S^Tr}$ and $\prod_{p\in S}R_{\theta^Tp}=R_{\theta^Tp_S}$. Thus
\begin{align*}
\eqref{expandprod}&=\int_{\ESS^d}\sum_{S\subset F} (-1)^{|S|}e^{-\beta p_S^Tr}\big(f\circ R_{\theta^Tp_S}\big)\,d\nu\\
&=\phi\Big(\sum_{S\subset F} (-1)^{|S|}e^{-\beta p_S^Tr}\iota\big(f\circ R_{\theta^Tp_S}\big)V_{p_S}^*V_{p_S}\Big)\quad\text{since $V_{p_S}^*V_{p_S}=1$}\\
&=\phi\Big(\sum_{S\subset F} (-1)^{|S|}V_{p_S}\iota\big(f\circ R_{\theta^Tp_S}\big)V_{p_S}^*\Big)\quad\text{by the KMS condition}\\
&=\phi\Big(\sum_{S\subset F} (-1)^{|S|}V_{p_S}V_{p_S}^*\iota(f)\Big)\quad\text{by~\eqref{def-R}.}
\end{align*}
Because the set $F$ has the property that $p\wedge q=0$ for $p\not= q\in F$, Nica
covariance gives $V_{p}V_p^*V_qV_{q}^*=V_{p\vee q}V_{p\vee q}^*=V_{p+q}V_{p+q}^*$ for
$p\not=q\in F$.  Thus for each $S\subset F$, we have $V_{p_S}V_{p_S}^*=\prod_{p\in
S}V_pV_p^*$, and
\[
\sum_{S\subset F} (-1)^{|S|}V_{p_S}V_{p_S}^*=\prod_{p\in F}(1-V_pV_p^*).
\]
The latter product is a projection, and it is fixed by the action $\alpha$. Hence another
application of the KMS condition gives
\begin{align*}
\int_{\ESS^d} f\,d\Big(\prod_{p \in F}(\id - &e^{-\beta p^T r} R_{\theta^T p*})  (\nu)\Big)=\phi\Big(\prod_{p\in F}(1-V_pV_p^*)\iota(f)\Big)\\
&=\phi\Big(\Big(\prod_{p\in F}(1-V_pV_p^*)\Big)^2\iota(f)\Big)\\
&=\phi\Big(\prod_{p\in F}(1-V_pV_p^*)\iota(f)\prod_{p\in F}(1-V_pV_p^*)\Big).
\end{align*}
This last term is positive because the argument of $\phi$ is a positive element of
$B_\theta$, and this proves (\ref{it:subinvariance}).

(\ref{it:phi-mu}) We have
\[
\prod_{j=1}\big(\id-e^{-\beta r_j}R_{\theta_j*}\big)(\nu)=\kappa\geq 0,
\]
so $\nu$ is subinvariant.   By Lemma~\ref{power series of Rs} we have
\begin{align}
\int_{\ESS^d}1\, d\nu&=\int_{\ESS^d}1\ d\Big(\sum_{p \in \NN^k} e^{- \beta p^T} R_{\theta^T p*}(\kappa)\Big)\label{normkappa}\\
&=\sum_{p \in \NN^k} e^{- \beta p^Tr} \int_{\ESS^d} 1\circ R_{\theta^Tp}\, d\kappa\notag\\
&=\sum_{p \in \NN^k} e^{- \beta p^T r} \|\kappa\|=y_\beta\|\kappa\|=1,\notag
\end{align}
and hence $\nu$ is a probability measure.

We will build a KMS$_\beta$ state using a representation of $B_\theta$ on $\ell^2(\NN^k)
\otimes L^2(\ESS^d, \kappa)$. Recall that we write $g_n$ for the trigonometric polynomial
$g_n(x)=e^{2\pi ix^Tn}$. Then the formula $W_nf := g_n f$ defines a unitary
representation $W$ of $\ZZ^d$ on $L^2(\ESS^d,\kappa)$. Write $\{\delta_p : p \in \NN^k\}$
for the orthonormal basis of point masses for $\ell^2(\NN^k)$, and let $D_n$ be  the
bounded operator such that $D_n\delta_p := e^{2\pi ip^T\theta n} \delta_p$. Then $D$ is a
unitary representation of $\ZZ^d$ on $\ell^2(\NN^k)$, and hence $D \otimes W$ is a
unitary representation of $\ZZ^d$ on $\ell^2(\NN^k) \otimes L^2(\ESS^d, \kappa)$.

Let $T$ be the usual Toeplitz representation of $\NN^k$ by isometries on $\ell^2(\NN^k)$.
Then we have
\begin{align*}
(T_p \otimes 1)(D_n \otimes W_n)(\delta_q \otimes f)
=e^{2\pi i q^T\theta n}(\delta_{p+q} \otimes W_n f),
\end{align*}
and
\begin{align*}(D_n \otimes W_n)(T_p \otimes 1)(\delta_q \otimes f)
    &= e^{2\pi i (p+q)^T\theta n}(\delta_{p+q} \otimes W_n f)\\
    &=e^{2\pi i p^T\theta n}(T_p \otimes 1)(D_n \otimes W_n).
\end{align*}
Hence  the universal property of $B_\theta$ gives a representation
\[
\pi:B_\theta\to B(\ell^2(\NN^k)\otimes L^2(\ESS^d,\kappa))
\]
such that $\pi(U_n)=(D_n \otimes W_n)$ and $\pi(V_p)=T_p\otimes 1$.

Since $\sum_{p\in \NN^k}e^{-\beta p^Tr}$ is convergent, there is a positive linear
functional $\phi_\nu :B_\theta \to \CC$ such that
\[
\phi_\nu(a):=\sum_{p\in\NN^k}e^{-\beta p^Tr}\big(\pi(a)(\delta_p\otimes 1) \mid \delta_p \otimes 1\big).
\]
Then~\eqref{normkappa} implies that $\phi_\nu(1)=1$, and $\phi_\nu$ is a state. To see
that $\phi_\nu$ is a KMS$_\beta$ state, we take $p,q \in \NN^k$, $n \in \ZZ^d$, and
calculate:
\begin{align}\label{formphinu}
\phi_\nu(V_p \iota(g_n) V_q^*)&=\phi_\nu(V_p U_n V_q^*)\\
    &=\sum_{b\in \NN^k} e^{-\beta b^Tr}  \big((D_n \otimes W_n)(T^*_q\delta_b \otimes 1) \mid T^*_p\delta_b \otimes 1\big)\notag\\
    &= \sum_{b \ge p \vee q} e^{-  \beta b^Tr}  \big(e^{ 2\pi i (b-q)^T\theta n} \delta_{b-q} \otimes g_n \mid \delta_{b-p}\otimes 1\big)\notag\\
    &= \delta_{p,q}\sum_{b \ge p} e^{-\beta b^Tr} e^{2\pi i(b-p)^T\theta n} \big(g_n \mid 1\big)\notag\\
    &= \delta_{p,q}  \Big(\sum_{b \in \NN^k} e^{- \beta(b+p)^Tr}  e^{ 2\pi ib^T\theta n}\Big) \int_{\ESS^d}g_n\,d\kappa.\notag
\end{align}
In particular,
\begin{equation}\label{momentsofmu}
\phi_\nu(\iota(g_n))=\phi_\nu(U_n)
    =\Big(\sum_{b \in \NN^k} e^{-\beta b^Tr}e^{2\pi ib^T\theta n}\Big) \int_{\ESS^d} g_n\,d\kappa.
\end{equation}
Thus
\[
\phi_\nu(V_p U_n V_q^*) = \delta_{p,q} e^{- \beta p^Tr}\phi_\nu(U_n),
\]
and $\phi_\nu$ is a KMS$_\beta$ state by Proposition~\ref{prop:KMS formula}.

From~\eqref{formphinu}, we have
\begin{align*}
    \phi_\nu\big(U_n)
   &=\sum_{b \in \NN^k} e^{- \beta b^Tr } \int_{\ESS^d}  e^{2\pi i (x+b^T\theta)^Tn}\, d\kappa(x)\\
  &=\sum_{b \in \NN^k} e^{- \beta b^Tr} \int_{\ESS^d}  g_n\circ R_{\theta^Tb}\, d\kappa\\
&= \int_{\ESS^d} g_n \, d\Big(\sum_{b\in\NN^k} e^{-\beta b^Tr}R_{\theta^Tb*} (\kappa)\Big),
\end{align*}
which by Lemma~\ref{power series of Rs} is $\int_{\ESS^d} g_n\,d\nu$. Thus
\begin{equation*}\phi_\nu(V_p \iota(g_n) V_q^*)= \delta_{p,q} e^{- \beta p^Tr}\phi_\nu(\iota(g_n))=\delta_{p,q} e^{- \beta p^Tr}\int_{\ESS^d} g_n\,d\nu.
\end{equation*}
 Since  $C(\ESS^d)=\clsp\{g_n: n\in \ZZ^d\}$, \eqref{KMS from mu m} follows from~\eqref{momentsofmu} and the linearity and continuity of $\phi_\nu$.

(\ref{it:isomorphism}) We first observe that both maps $\kappa\mapsto \nu_\kappa$ and
$\nu\mapsto \phi_\nu$ are affine, and hence so is the composition. To see that the
composition is surjective, we take a KMS$_\beta$ state $\phi$, restrict it to the range
of $\iota$ to get a measure $\nu$, and take
\[
\kappa=\prod_{j=1}^k\big(\id-e^{-\beta r_j}R_{\theta_j*}\big)(\nu).
\]
Then the formula~\eqref{eq:KMS char} implies that $\phi$ and $\phi_{\nu_\kappa}$ agree on
the elements $V_p\iota(f)V_q^*$, and hence by linearity and continuity on all of
$B_\theta$. Thus $\phi=\phi_{\nu_\kappa}$. The procedure which sends $\phi$ to $\kappa$
is weak* continuous and inverts $\kappa\mapsto \phi_{\nu_\kappa}$. Thus it is a
continuous bijection of one compact Hausdorff space onto another, and is therefore a
homeomorphism. Thus so is the inverse $\kappa\mapsto \phi_{\nu_\kappa}$.
\end{proof}

\section{The subinvariance relation for the direct limit}\label{ctssubinv}

We now return to the set-up in which the dynamics $\alpha$ on the direct limit $B_\infty$
is given by a sequence $\{r^m\}$.

Suppose that $\phi$ is a KMS$_\beta$ state of $(B_\infty,\alpha)$ and $\nu_m$ are the
measures on $\ESS^d$ that implement the restrictions of $\phi\circ \pi_{m,\infty}$ to
$C(\ESS^d)\subset B_m$. Since the embeddings $\pi_m$ are all unital, so are the
$\pi_{m,\infty}$. Thus for each $m$, the restriction $\phi\circ \pi_{m,\infty}$ is a
KMS$_1$ state of $(B_m,\alpha^{r_m})$, and hence is given by a probability measure
$\nu_m$ which satisfies the subinvariance relations for $\theta=\theta_m$
in~\eqref{eq:subinv} parametrised by subsets $F$ of $\{1,\dots,k\}$. But here, since
$\phi\circ\pi_{m,\infty}=\phi\circ\pi_{m+l,\infty}\circ\pi_{m,m+l}$ for $l\in\NN$, the
measure $\nu_m$ satisfies a sequence of subinvariance relations parametrised by $l$ as
well as $F$. Our first main result says that these can be combined into one master
subinvariance relation with real parameters $s\in [0,\infty)^k$.

We now describe our continuously parametrised subinvariance relation. For $k = 1$ this
follows from \cite[Definition~6.7 and Theorem~6.9]{BHS}.

\begin{thm}\label{subinv}
Suppose that $\phi$ is a KMS$_\beta$ state on $(B_\infty,\alpha)$ and $m\in \NN$. We
write $\iota_m$ for the inclusion of $C(\ESS^d)$ in $B_m$, and then
\[
\iota_m(C(\ESS^d))=\clsp\{U_{m,n}:n\in \NN^d\}.
\]
Let $\nu_m$ be the probability measure on $\ESS^d$ such that
\begin{equation}\label{defnum} \phi\circ\pi_{m,\infty}(\iota_m(f))=\int_{\ESS^d}
f\,d\nu_m\quad \text{for $f\in C(\ESS^d)$.}
\end{equation}
Write $\theta_{m,j}$ for the $j$th row of the matrix $\theta_m$. Then  for every $s\in
[0,\infty)^k$, we have
\begin{equation}\label{eq:ctssubinv}
\prod_{j=1}^k\big(\id -e^{-\beta s_jr^m_j}R_{s_j\theta_{m,j}^T*}\big)(\nu_m)\geq 0.
\end{equation}
\end{thm}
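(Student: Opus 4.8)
The plan is to deduce the continuously parametrised relation \eqref{eq:ctssubinv} for $\nu_m$ from the discrete relations of Proposition~\ref{prp:KMS char}(\ref{it:subinvariance}) for the measures $\nu_{m+l}$, $l\in\NN$, by pushing those relations down to $\ESS^d$ using the maps induced by the $E$'s and then letting $l\to\infty$. The mechanism is that $\nu_{m+l}$ sees finer and finer fractions of $r^m$ and of the rows of $\theta_m$, because the diagonal matrices $\mathcal{D}_{m,l}$ built from the $D_i$ blow up.

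First I would record the bookkeeping. Put $\mathcal{E}_{m,l}:=E_{m+l-1}E_{m+l-2}\cdots E_m$ and $\mathcal{D}_{m,l}:=D_{m+l-1}\cdots D_m$; since each $D_i$ is diagonal with integer entries exceeding $1$, so is $\mathcal{D}_{m,l}$, and $(\mathcal{D}_{m,l})_{jj}=\prod_{i=m}^{m+l-1}(D_i)_{jj}\to\infty$ as $l\to\infty$ for every $j$. Iterating \eqref{relatetheta} and \eqref{relater} (using that diagonal matrices commute) gives $\theta_m=\mathcal{D}_{m,l}\theta_{m+l}\mathcal{E}_{m,l}$ and $r^m=\mathcal{D}_{m,l}r^{m+l}$, and transposing the first gives $\mathcal{E}_{m,l}^T\theta_{m+l}^T=\theta_m^T\mathcal{D}_{m,l}^{-1}$. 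Because $\pi_{m,\infty}=\pi_{m+l,\infty}\circ\pi_{m,m+l}$ and $\pi_{m,m+l}(U_{m,n})=U_{m+l,\mathcal{E}_{m,l}n}$, evaluating $\phi\circ\pi_{m,\infty}$ on $\iota_m(g_n)=U_{m,n}$ and comparing Fourier coefficients shows that $\nu_m$ is the pushforward $(\mathcal{E}_{m,l}^T)_*\nu_{m+l}$ of $\nu_{m+l}$ under the endomorphism of $\ESS^d$ induced by $\mathcal{E}_{m,l}^T$. I would also note the routine fact that, for a linear endomorphism $L$ of $\RR^d$ preserving $\ZZ^d$, pushforward commutes with translation: $L_*(R_{y*}\mu)=R_{Ly*}(L_*\mu)$ for $y\in\RR^d$.

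Now fix $s\in[0,\infty)^k$. If some $s_j=0$, the $j$th factor of the product in \eqref{eq:ctssubinv} is $\id-R_{0*}=0$, so the left side is the zero measure and the inequality is trivial; hence I may assume $s\in(0,\infty)^k$. For all large $l$ set $p^{(l)}_j:=\lfloor s_j(\mathcal{D}_{m,l})_{jj}\rfloor\in\NN$ and $s^{(l)}_j:=p^{(l)}_j/(\mathcal{D}_{m,l})_{jj}$, so that $s^{(l)}_j\to s_j$ for all $j$ at once. The set $F_l:=\{p^{(l)}_1e_1,\dots,p^{(l)}_ke_k\}\subset\NN^k$ has distinct members supported on distinct coordinates, hence with pairwise meet $0$; here diagonality of the $D_i$ is essential, since it is what makes $\mathcal{D}_{m,l}^{-1}(p^{(l)}_je_j)=s^{(l)}_je_j$ a coordinate vector again. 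Since $\nu_{m+l}$ is the measure of $\phi\circ\pi_{m+l,\infty}$ as in \eqref{defnum}, Proposition~\ref{prp:KMS char}(\ref{it:subinvariance}) applied to $F_l$ gives
\[
\prod_{j=1}^k\big(\id-e^{-\beta p^{(l)}_j r^{m+l}_j}R_{p^{(l)}_j\theta_{m+l,j}^T*}\big)(\nu_{m+l})\ge 0.
\]
Pushing this positive measure forward by $\mathcal{E}_{m,l}^T$, expanding the product over subsets of $F_l$, and simplifying each resulting term with the identities $L_*(R_{y*}\mu)=R_{Ly*}(L_*\mu)$, $(\mathcal{E}_{m,l}^T)_*\nu_{m+l}=\nu_m$, $\mathcal{E}_{m,l}^T\theta_{m+l}^T=\theta_m^T\mathcal{D}_{m,l}^{-1}$ and $r^m=\mathcal{D}_{m,l}r^{m+l}$, the $j$th factor turns into $\id-e^{-\beta s^{(l)}_jr^m_j}R_{s^{(l)}_j\theta_{m,j}^T*}$ and we obtain
\[
\prod_{j=1}^k\big(\id-e^{-\beta s^{(l)}_jr^m_j}R_{s^{(l)}_j\theta_{m,j}^T*}\big)(\nu_m)\ge 0.
\]

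Finally I would let $l\to\infty$. Expanding the product over subsets $S\subseteq\{1,\dots,k\}$ writes the left side of the last display as a finite sum of terms $(-1)^{|S|}e^{-\beta\sum_{j\in S}s^{(l)}_jr^m_j}R_{(\sum_{j\in S}s^{(l)}_j\theta_{m,j}^T)*}(\nu_m)$; as $s^{(l)}\to s$ the scalar prefactors converge, and by strong continuity of the translation action $R$ on $C(\ESS^d)$ each $R_{y*}\nu_m$ depends weak* continuously on $y\in\RR^d$, so this finite sum converges weak* to $\prod_{j=1}^k(\id-e^{-\beta s_jr^m_j}R_{s_j\theta_{m,j}^T*})(\nu_m)$. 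As the cone of positive measures on $\ESS^d$ is weak* closed, the limit is positive, which is \eqref{eq:ctssubinv}. The step I expect to be the main obstacle is the second paragraph: getting the transformation identities exactly right so that the discrete relation for $\nu_{m+l}$ really becomes the relation for $\nu_m$ with the correct exponents and translations, and, hand in hand with that, arranging --- via the diagonality of the $D_i$ and the growth $(\mathcal{D}_{m,l})_{jj}\to\infty$ --- that an arbitrary $s\in(0,\infty)^k$ is approximated by vectors of the form $\mathcal{D}_{m,l}^{-1}p$, $p\in\NN^k$, whose coordinate pieces still satisfy the meet-zero hypothesis of Proposition~\ref{prp:KMS char}(\ref{it:subinvariance}).
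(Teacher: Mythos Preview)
Your proposal is correct and follows essentially the same route as the paper: apply Proposition~\ref{prp:KMS char}(\ref{it:subinvariance}) at level $m+l$ to the coordinate set $\{p_je_j\}$, push forward by $\mathcal{E}_{m,l}^T$, use the identities $\mathcal{E}_{m,l}^T\theta_{m+l}^T=\theta_m^T\mathcal{D}_{m,l}^{-1}$ and $r^m=\mathcal{D}_{m,l}r^{m+l}$ to rewrite the factors in terms of $\nu_m$, and then pass from a dense set of parameters to all of $[0,\infty)^k$ by continuity. The paper packages the first two steps as a separate lemma and corollary (stated for arbitrary $F$ with pairwise meet zero) and argues density in $[0,\infty)^k$ rather than choosing a specific approximating sequence $s^{(l)}$, but these are organisational differences, not substantive ones.
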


We prove Theorem~\ref{subinv} at the end of this section. We first need two preliminary
results.

The homomorphism $\pi_m:B_m\to B_{m+1}$ maps $\iota_m(C(\ESS^d))$ into
$\iota_{m+1}(C(\ESS^d))$. When we view $\iota_m(C(\ESS^d))$ as $\clsp\{U_{m,n}\}$, the
homomorphism  $\pi_m$ is characterised by
\[
\pi_m(U_{m,n})=U_{m+1,E_mn};
\]
when we view $\iota_m(C(\ESS^d))$ as $\{\iota_m(f):f\in C(\ESS^d)\}$, $\pi_m$ is induced by
the covering map $E^T_m:\ESS^d\to \ESS^d$, and hence we have
$\pi_m(\iota_m(f))=\iota_{m+1}(f\circ E^T_m)$. In particular,  $\pi_m|_{C(\ESS^d)}$ is
$(E^T_m)^*:f\mapsto f\circ E^T_m$. The corresponding map on measures is given by
$E^T_{m*}$:
\[
\int_{\ESS^d} f\,d\pi_m(\nu)=\int_{\ESS^d} f\,dE^T_{m*}(\nu)=\int_{\ESS^d}(f\circ E^T_m)\,d\nu.
\]

\begin{lemma}\label{backinfluence}
Suppose that $\phi$ is a KMS$_\beta$ state on $(B_\infty,\alpha)$. For $m\in \NN$, let
$\nu_m$ be the probability measure on $\ESS^d$ satisfying~\eqref{defnum}. Then for every
finite subset $F$ of $\NN^k$ such that $p\wedge q=0$ for all $p\not= q\in F$, we have
\[
\prod_{p\in F}\big(\id-e^{-\beta(D_m^{-1}p)^Tr^m}R_{\theta_m^TD_m^{-1}p*}\big)(\nu_m)\geq 0.
\]
\end{lemma}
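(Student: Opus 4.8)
The idea is to pull the subinvariance relation for $\nu_{m+1}$ (the one coming from Proposition~\ref{prp:KMS char}(\ref{it:subinvariance}) applied to the KMS$_1$ state $\phi\circ\pi_{m+1,\infty}$ of $(B_{m+1},\alpha^{r^{m+1}})$) back to a relation on $\nu_m$ through the covering map $E_m^T$. First I would record that $\nu_m = E^T_{m*}(\nu_{m+1})$, which follows from the paragraph preceding the lemma: since $\phi\circ\pi_{m,\infty} = \phi\circ\pi_{m+1,\infty}\circ\pi_m$ and $\pi_m|_{C(\ESS^d)}$ is induced by $E^T_m$, the defining formula \eqref{defnum} gives exactly $\int f\,d\nu_m = \int (f\circ E^T_m)\,d\nu_{m+1}$ for all $f\in C(\ESS^d)$.

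Next, fix a finite set $F\subset\NN^k$ with $p\wedge q=0$ for distinct $p,q\in F$. Proposition~\ref{prp:KMS char}(\ref{it:subinvariance}), applied with $\theta=\theta_{m+1}$, $r=r^{m+1}$ and $\beta=1$, gives
\[
\prod_{p\in F}\big(\id-e^{-(p)^Tr^{m+1}}R_{\theta_{m+1}^Tp*}\big)(\nu_{m+1})\geq 0.
\]
Now push this positive measure forward along $E^T_{m*}$, which is positivity-preserving and norm-preserving. The key computation is that $E^T_{m*}$ intertwines the relevant operators: for $y\in\RR^d$ one has $E^T_m\circ R_y = R_{E^T_m y}\circ E^T_m$ as maps $\ESS^d\to\ESS^d$, hence $E^T_{m*}\circ R_{y*} = R_{(E^T_m y)*}\circ E^T_{m*}$, and in particular, since $E^T_m\theta_{m+1}^T = (\,D_m\theta_{m+1}E_m\,)^T D_m^{-T}\cdot$… — more cleanly: using \eqref{relatetheta} in the form $\theta_{m+1}E_m = D_m^{-1}\theta_m$, we get $E_m^T\theta_{m+1}^T p = (\theta_{m+1}E_m)^T p = (D_m^{-1}\theta_m)^T p = \theta_m^T D_m^{-1}p$, so
\[
E^T_{m*}\big(R_{\theta_{m+1}^T p*}(\mu)\big) = R_{(E_m^T\theta_{m+1}^T p)*}\big(E^T_{m*}(\mu)\big) = R_{\theta_m^T D_m^{-1}p*}\big(E^T_{m*}(\mu)\big).
\]
Also $p^Tr^{m+1} = p^T D_m^{-1}r^m = (D_m^{-1}p)^Tr^m$ since $D_m$ is diagonal (hence symmetric), using \eqref{relater}. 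Therefore applying $E^T_{m*}$ to the displayed subinvariance relation above, and noting it commutes with the finite product of operators $\id - e^{-p^Tr^{m+1}}R_{\theta_{m+1}^Tp*}$ term by term, yields
\[
\prod_{p\in F}\big(\id-e^{-(D_m^{-1}p)^Tr^m}R_{\theta_m^T D_m^{-1}p*}\big)\big(E^T_{m*}(\nu_{m+1})\big)\geq 0,
\]
and since $E^T_{m*}(\nu_{m+1})=\nu_m$ this is exactly the claimed inequality.

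The only genuinely delicate point is verifying the intertwining identity $E^T_{m*}\circ R_{y*} = R_{(E_m^T y)*}\circ E^T_{m*}$ and keeping straight which transpose and which of $D_m$, $E_m$ appears where; this is where \eqref{relatetheta} and \eqref{relater} get used, and it is precisely the "$D_m\theta_{m+1}E_m=\theta_m$ exactly" step flagged in Remark~\ref{rmk:theta woes}. Everything else — that $E^T_{m*}$ preserves positivity, that it commutes past the finite product, that the exponents match up — is routine once the indices are set correctly. I would present the main computation as a short display tracking one factor $\id - e^{-p^Tr^{m+1}}R_{\theta_{m+1}^Tp*}$ through $E^T_{m*}$, then remark that the product is handled the same way factor by factor.
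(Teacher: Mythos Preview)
Your approach is exactly the paper's: apply Proposition~\ref{prp:KMS char}(\ref{it:subinvariance}) to $\phi\circ\pi_{m+1,\infty}$ to get a subinvariance relation for $\nu_{m+1}$, establish the intertwining identity $E^T_m\circ R_{\theta_{m+1}^Tp}=R_{\theta_m^TD_m^{-1}p}\circ E^T_m$ via \eqref{relatetheta}, push forward by $E^T_{m*}$, and use \eqref{relater} to convert the exponents. The paper's proof is organised identically, with your intertwining identity appearing as its equation labelled ``commrel''.

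One slip to fix: you invoke Proposition~\ref{prp:KMS char} ``with $\beta=1$'', apparently following the sentence before Theorem~\ref{subinv} that calls $\phi\circ\pi_{m,\infty}$ a ``KMS$_1$ state''. That sentence is a typo; the restriction is a KMS$_\beta$ state of $(B_{m+1},\alpha^{r^{m+1}})$, and the paper's own proof writes $e^{-\beta p^Tr^{m+1}}$ accordingly. With $\beta=1$ your final display reads $e^{-(D_m^{-1}p)^Tr^m}$, which does not match the lemma's $e^{-\beta(D_m^{-1}p)^Tr^m}$. Restore the $\beta$ throughout and the argument is complete.
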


\begin{proof}
We apply Proposition~\ref{prp:KMS char}(\ref{it:subinvariance}) to the state
$\phi\circ\pi_{m+1,\infty}$ of $(B_{m+1},\alpha^{r^{m+1}})$. We deduce that
\begin{equation}\label{num+isubinv}
\prod_{p\in F}\big(\id-e^{-\beta p^Tr^{m+1}}R_{\theta_{m+1}^Tp*}\big)(\nu_{m+1})\geq 0.
\end{equation}
To convert this to a statement about $\nu_m$, we want to apply $E^T_{m*}$ to the
left-hand side. We first observe that
\begin{align}\label{commrel}
E^T_m\circ R_{\theta_{m+1}^Tp}(x)&=E_m^Tx-E_m^T\theta_{m+1}^Tp\\
&=E^T_mx-\theta_m^TD_m^{-1}p\qquad \text{using~\eqref{relatetheta}}\notag\\
&=R_{\theta_m^TD_m^{-1}p}\circ E^T_m(x).\notag
\end{align}
Since $E^T_{m*}$ preserves positivity and $h\mapsto h_*$ is covariant with respect to
composition, \eqref{num+isubinv} implies that
\begin{align*}
0&\leq E^T_{m*}\Big(\prod_{p\in F}\big(\id-e^{-\beta p^Tr^{m+1}}R_{\theta_{m+1}^Tp*}\big)(\nu_{m+1})\Big)\\
&=\Big(\prod_{p\in F}\big(\id-e^{-\beta p^Tr^{m+1}}R_{\theta_m^TD_m^{-1}p*}\big)\Big)\circ E^T_{m*}(\nu_{m+1})\quad\text{using~\eqref{commrel}}\\
&=\prod_{p\in F}\big(\id-e^{-\beta p^T(D_m)^{-1}r^m}R_{\theta_m^TD_m^{-1}p*}\big)(\nu_m)\qquad\text{using~\eqref{relater}}\\
&=\prod_{p\in F}\big(\id-e^{-\beta(D_m^{-1}p)^Tr^m}R_{\theta_m^TD_m^{-1}p*}\big)(\nu_m).\qedhere
\end{align*}
\end{proof}

For a positive integer $l$, we can apply the argument of Lemma~\ref{backinfluence} to the
embedding $\pi_{m,m+l}$ of $B_m$ in $B_{m+l}$. This amounts to replacing the matrix $D_m$
with $D_{m,m+l}:=D_{m+l-1}D_{m+l-2}\cdots D_{m+1}D_m$, $E_m$ with a similarly defined
$E_{m,m+l}$, $\theta_{m+1}$ with $\theta_{m+l}$, and $r^{m+1}$ with $r^{m+l}$. We obtain:

\begin{cor}\label{subinvl}
Suppose that $\phi$ is a KMS$_\beta$ state on $(B_\infty,\alpha)$, and $\nu_m$ is the
probability measure satisfying~\eqref{defnum}. Then for every positive integer $l$ and
for every finite subset $F$ of $\NN^k$ such that $p\wedge q=0$ for all $p\not= q\in F$,
we have
\[
\prod_{p\in F}\big(\id-e^{-\beta(D_{m,m+l}^{-1}p)^Tr^m}R_{\theta_m^TD_{m,m+l}^{-1}p*}\big)(\nu_m)\geq 0.
\]
\end{cor}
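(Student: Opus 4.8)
The plan is to run the proof of Lemma~\ref{backinfluence} essentially verbatim, but with the single-step data $(D_m,E_m,\theta_{m+1},r^{m+1})$ replaced throughout by the $l$-fold composite data $(D_{m,m+l},E_{m,m+l},\theta_{m+l},r^{m+l})$, where $E_{m,m+l}:=E_{m+l-1}\cdots E_{m+1}E_m$. The only new work is to record that this composite data obeys the same three identities that drive that proof. First, iterating~\eqref{relatetheta} gives $\theta_m=D_{m,m+l}\,\theta_{m+l}\,E_{m,m+l}$; the diagonality of each $D_j$ is what lets me collect the $D$-factors into the single diagonal matrix $D_{m,m+l}=D_{m+l-1}\cdots D_m$ regardless of order. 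Second, iterating~\eqref{relater} gives $r^{m+l}=D_{m,m+l}^{-1}r^m$, again using that the $D_j$ commute. Third, since $\pi_{m,m+l}=\pi_{m+l-1}\circ\cdots\circ\pi_m$ restricts on $C(\ESS^d)$ to the composite of the pullbacks $(E^T_j)^*$, it is induced by the composite covering map $E^T_{m,m+l}$, so $\pi_{m,m+l}(\iota_m(f))=\iota_{m+l}(f\circ E^T_{m,m+l})$ and the corresponding map on measures is the pushforward $E^T_{m,m+l*}$; comparing~\eqref{defnum} at levels $m$ and $m+l$ through $\phi\circ\pi_{m,\infty}=\phi\circ\pi_{m+l,\infty}\circ\pi_{m,m+l}$ then yields $\nu_m=E^T_{m,m+l*}(\nu_{m+l})$.

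With these identities in place I would carry out the three steps of Lemma~\ref{backinfluence}. First, apply Proposition~\ref{prp:KMS char}(\ref{it:subinvariance}) to the KMS$_\beta$ state $\phi\circ\pi_{m+l,\infty}$ of $(B_{m+l},\alpha^{r^{m+l}})$ to get the subinvariance relation for $\nu_{m+l}$ with matrix $\theta_{m+l}$ and vector $r^{m+l}$. Next, apply the positivity-preserving map $E^T_{m,m+l*}$ to that relation; to move the factors across the pushforward I would use the analogue of~\eqref{commrel}, namely
\[
E^T_{m,m+l}\circ R_{\theta_{m+l}^Tp}=R_{\theta_m^TD_{m,m+l}^{-1}p}\circ E^T_{m,m+l},
\]
which follows from the matrix identity $E^T_{m,m+l}\theta_{m+l}^T=\theta_m^TD_{m,m+l}^{-1}$ (the transpose of $\theta_m=D_{m,m+l}\theta_{m+l}E_{m,m+l}$, using that $D_{m,m+l}$ is diagonal). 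Since $h\mapsto h_*$ is covariant for composition, each factor $\big(\id-e^{-\beta p^Tr^{m+l}}R_{\theta_{m+l}^Tp*}\big)$ turns into $\big(\id-e^{-\beta p^Tr^{m+l}}R_{\theta_m^TD_{m,m+l}^{-1}p*}\big)$ acting on $E^T_{m,m+l*}(\nu_{m+l})=\nu_m$. Finally, rewriting the exponent via $r^{m+l}=D_{m,m+l}^{-1}r^m$ gives $p^Tr^{m+l}=(D_{m,m+l}^{-1}p)^Tr^m$, which is exactly the claimed relation.

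The step requiring the most care is the displayed commutation identity: it must hold on the nose in $\RR^d$, not merely modulo $\ZZ^d$, for the pushforward to land the subinvariance factors on the correct translations. This is precisely where the \emph{exact} form of~\eqref{relatetheta}, flagged in Remark~\ref{rmk:theta woes}, is used in its iterated guise, and where the diagonality of the $D_j$ keeps the order of multiplication under control. Everything else is a mechanical substitution into the proof of Lemma~\ref{backinfluence}, so I expect no further obstacles.
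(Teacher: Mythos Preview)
Your proposal is correct and follows exactly the approach indicated in the paper: the paper's proof simply says to rerun the argument of Lemma~\ref{backinfluence} for the embedding $\pi_{m,m+l}$, replacing $D_m$, $E_m$, $\theta_{m+1}$, $r^{m+1}$ by their $l$-fold composites $D_{m,m+l}$, $E_{m,m+l}$, $\theta_{m+l}$, $r^{m+l}$, which is precisely what you spell out. Your write-up is in fact more detailed than the paper's, since you record explicitly the iterated forms of~\eqref{relatetheta} and~\eqref{relater} and the commutation identity replacing~\eqref{commrel}.
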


\begin{proof}[Proof of Theorem~\ref{subinv}]
For each $l\geq 0$ and $p\in \NN^k$, we can apply Corollary~\ref{subinvl} to the finite
subset $F_p:=\{p_je_j:1\leq j\leq k\}$ of $\NN^k$. This gives us the subinvariance
relation
\begin{equation}\label{F=F_p}
\prod_{j=1}^k\big(\id -e^{-\beta(D_{m,m+l}^{-1}p_je_j)^Tr^m}R_{\theta_m^TD_{m,m+l}^{-1}p_je_j*}\big)(\nu_m)\geq 0.
\end{equation}
Each factor in the left-hand side $L$ of~\eqref{F=F_p} has the form $\id-e^{-s}R_{v*}$.
Since $(e^{-s}R_{v*})(e^{-t}R_{w*})=e^{-(s+t)}R_{(v+w)*}$, the product
$(\id-e^{-s}R_{v*})(\id-e^{-t}R_{w*})$ of two such terms collapses to
\[
\id-e^{-s}R_{v*}-e^{-t}R_{w*}+e^{-(s+t)}R_{(v+w)*}.
\]
Thus we can expand
\[
L= \id + \sum_{\emptyset \not= G\subset\{1,\dots, k\}}(-1)^{|G|}e^{-\beta(D_{m,m+l}^{-1}p_G)^Tr^m}R_{\theta_m^TD_{m,m+l}^{-1}p_G*}(\nu_m),
\]
where $p_G:=\sum_{j\in G}p_je_j$.

For each fixed $f\in C(\ESS^d)$ and $\nu\in P(\ESS^d)$, the function $s\mapsto \int
R_s(f)\,d\nu$ on $\RR^k$ is continuous, being the composition of the norm-continuous map
$s\mapsto R_s(f)$ and the bounded functional given by integration against $\nu$. We now
consider a positive function $f$ in $C(\ESS^d)$: we write $f\in C(\ESS^d)_+$.  For $s\in
[0,\infty)^k$ and $G\subset\{1,\dots,k\}$, we write $s_G=\sum_{j\in G}s_je_j$. Then
\[
g_G:s\mapsto \int f\,d(e^{-\beta s_G^Tr^m}R_{\theta_m^Ts_G*})(\nu_m)
\]
 is continuous, and so is the linear combination
\[
L(s):=\int f\,d\Big(\sum_{G\subset\{1,\dots, k\}}(-1)^{|G|}e^{-\beta s_G^Tr^m}R_{\theta_m^Ts_G*}\Big)(\nu_m).
\]
The subinvariance relation~\eqref{F=F_p} says that $L(s)\geq 0$ for all $s$ of the form
$D_{m,m+l}p$ for $l\geq 0$ and $p\in \NN^k$.

Since each of the matrices $D_m$ is diagonal with entries $d_{m,j}$, say, at least $2$,
we have
\[
D_{m,m+l}^{-1}p_je_j=\Big(\prod_{n=0}^{l-1}d_{m+n,j}^{-1}\Big)p_je_j.
\]
Since $d_{n,j}\geq 2$ for all $n$ and $j$, the rational numbers of the form
$\big(\prod_{n=0}^{l-1}d_{m+n,j}^{-1}\big)p_j$ are dense in $[0,\infty)$. Thus the
vectors $s$ for which $L(s)\geq 0$ form a dense subset of $[0,\infty)^k$, and the
continuity of $L$ implies that $L(s)\geq 0$ for all $s\in [0,\infty)^k$. A measure $\nu$
which has $\int f\,d\nu\geq 0$ for all $f\in C(\ESS^d)_+$ is a positive measure, and this
is what we had to prove.
\end{proof}

\section{The solution of the subinvariance relation}

We now describe the solutions to the subinvariance relation~\eqref{eq:ctssubinv}. We
observe that the formula on the right of~\eqref{measure nu of mu} below is the Laplace
transform of a periodic function, and as such is given by an integral over a finite
rectangle. This observation motivated our calculations, but in the end we found it easier
to work with the trigonometric polynomials $x\mapsto e^{2\pi i n x}$.

\begin{thm}\label{thm:sub-inv}
Let $\theta\in M_{k,d}(\ESS^d)$, $\beta\in (0,\infty)$ and $r=(r_j)\in (0,\infty)^k$.
Denote the $j$th row of $\theta$ by $\theta_j$.
\begin{enumerate}
\item\label{mutonu}  For each $\mu\in M(\ESS^d)$,  there is a nonnegative measure
    $\nu_\mu\in M(\ESS^d)$ such that
\begin{equation}\label{measure nu of mu}
\int_{\ESS^d} f\,d\nu_\mu=\int_{[0,\infty)^k}e^{ -\beta w^Tr}\int_{\ESS^d}f(x+\theta^Tw)\,d\mu(x)\,dw \quad\text{for } f\in C(\ESS^d),
\end{equation}
and $\nu_\mu$ has total mass $\|\mu\|\prod_{j=1 }^k (\beta r_j)^{-1}$. The  measure
$\nu=\nu_\mu$  satisfies the subinvariance relation
\begin{equation}\label{eq:ref sub-inv}
 \prod_{j=1}^k\big(\id - e^{- \beta s_jr_j} R_{s_j\theta_j^T*}\big)(\nu) \ge 0\quad\text {for $s\in [0,\infty)^k$.}
\end{equation}
\smallskip

\item For each $\nu$ satisfying the subinvariance relation~\eqref{eq:ref sub-inv},
    there is a measure $\mu_\nu\in M(\ESS^d)$ such that
\begin{align}\label{measure mu of nu}
\int_{\ESS^d} &f\, d\mu_\nu=
\lim_{s_k\rightarrow 0^+}\cdots \lim_{s_1\rightarrow 0^+}\frac{1}{s_k\cdots s_1}\int_{\ESS^d} f\, d \big(\textstyle{\prod_{j=1 }^k(\id - e^{-\beta s_jr_j} R_{s_j\theta_j^T*}})\big)(\nu)
\end{align}
 for $f\in C(\ESS^d)$, and $\mu_\nu$ has total mass $\|\nu\|\prod_{j=1 }^k (\beta
 r_j)$.
\smallskip

\item\label{item-c-thm:sub-inv} The map $\mu\mapsto \nu_\mu$ is an affine homeomorphism of $M(\ESS^d)$ onto the
    simplex of measures satisfying the subinvariance relation~\eqref{eq:ref sub-inv},
    and the inverse takes $\nu$ to $\mu_\nu$.
\end{enumerate}
\end{thm}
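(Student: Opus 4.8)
The plan is to establish the three parts in order, using the trigonometric polynomials $g_n(x)=e^{2\pi ix^Tn}$ as test functions throughout, since $C(\ESS^d)=\clsp\{g_n:n\in\ZZ^d\}$ and everything in sight is linear and continuous.

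\emph{Part (a).} First I would check that the formula \eqref{measure nu of mu} really defines a bounded positive functional on $C(\ESS^d)$. The map $w\mapsto \int_{\ESS^d}f(x+\theta^Tw)\,d\mu(x)$ is continuous and bounded by $\|f\|_\infty\|\mu\|$, so multiplying by $e^{-\beta w^Tr}$ (which is integrable over $[0,\infty)^k$ with integral $\prod_j(\beta r_j)^{-1}$) gives an absolutely convergent integral; positivity of $f$ forces positivity of the functional, so Riesz representation yields $\nu_\mu\in M(\ESS^d)$, and taking $f\equiv 1$ gives the claimed total mass. Then I would verify the subinvariance relation \eqref{eq:ref sub-inv}. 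Expanding the product $\prod_{j=1}^k(\id-e^{-\beta s_jr_j}R_{s_j\theta_j^T*})$ over subsets $G\subseteq\{1,\dots,k\}$ (as is done in the proof of Theorem~\ref{subinv}), the $G$-term applied to $\nu_\mu$ and integrated against $f$ becomes, after substituting the definition of $\nu_\mu$ and changing variables $w\mapsto w+s_G$ in the $G$-summand, an integral over $[0,\infty)^k$ of $e^{-\beta w^Tr}$ times $\int f(x+\theta^Tw)\,d\mu(x)$ times $\prod_{j\in G}(-1)\mathbf{1}_{[s_j,\infty)}(w_j)$ reshuffled. The key point: $\prod_{j\in G}(1-\mathbf{1}_{[s_j,\infty)}(w_j))=\prod_{j\in G}\mathbf{1}_{[0,s_j)}(w_j)\ge 0$, so $\int f\,d\big(\prod_j(\id-e^{-\beta s_jr_j}R_{s_j\theta_j^T*})\big)(\nu_\mu)$ equals $\int_{\prod_j[0,s_j)} e^{-\beta w^Tr}\int f(x+\theta^Tw)\,d\mu(x)\,dw\ge0$ whenever $f\ge0$. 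Hence the product measure is positive.

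\emph{Part (b).} Here I would run the computation from part (a) backwards. That identity, $\int f\,d\big(\prod_{j=1}^k(\id-e^{-\beta s_jr_j}R_{s_j\theta_j^T*})\big)(\nu_\mu)=\int_{\prod_j[0,s_j)}e^{-\beta w^Tr}\int f(x+\theta^Tw)\,d\mu(x)\,dw$, shows that dividing by $s_1\cdots s_k$ and letting $s_j\to 0^+$ iteratively recovers $\int f\,d\mu$ by the fundamental theorem of calculus (the integrand is continuous in $w$). So for a general $\nu$ satisfying \eqref{eq:ref sub-inv}, I would \emph{define} $\mu_\nu$ by \eqref{measure mu of nu}, the content being that the iterated limit exists and defines a bounded positive functional. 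Existence of each limit: test on $g_n$, where $R_{s_j\theta_j^T*}$ acts as multiplication by the scalar $e^{-2\pi i s_j(\theta_j n)}$ (since $g_n\circ R_y=e^{2\pi iy^Tn}g_n$), so $\int g_n\,d\big(\prod_j(\id-e^{-\beta s_jr_j}R_{s_j\theta_j^T*})\big)(\nu)=\prod_{j=1}^k\big(1-e^{-s_j(\beta r_j+2\pi i\theta_j n)}\big)\,M_n(\nu)$ where $M_n(\nu)=\int g_n\,d\nu$; dividing by $s_1\cdots s_k$ and letting $s_j\to0^+$ gives $\prod_{j=1}^k(\beta r_j+2\pi i\theta_j n)\,M_n(\nu)$. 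This shows the limit exists on the dense subspace $\lsp\{g_n\}$, with values $\prod_j(\beta r_j+2\pi i\theta_j n)M_n(\nu)$. For positivity and boundedness of the limiting functional on all of $C(\ESS^d)$, I would use that each finite-$s$ functional $f\mapsto (s_1\cdots s_k)^{-1}\int f\,d\big(\prod_j(\id-e^{-\beta s_jr_j}R_{s_j\theta_j^T*})\big)(\nu)$ is positive by \eqref{eq:ref sub-inv} with total mass $(s_1\cdots s_k)^{-1}\prod_j(1-e^{-\beta s_jr_j})\|\nu\|\to\|\nu\|\prod_j(\beta r_j)$; a uniformly-bounded-mass family of positive functionals that converges on a dense subspace converges weak* to a positive functional of the limiting mass. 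That gives $\mu_\nu\in M(\ESS^d)$ with the stated total mass.

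\emph{Part (c).} Affineness of $\mu\mapsto\nu_\mu$ is immediate from \eqref{measure nu of mu}. The moment computations above show $M_n(\nu_\mu)=\prod_{j=1}^k(\beta r_j+2\pi i\theta_jn)^{-1}M_n(\mu)$ (integrate $g_n$ against \eqref{measure nu of mu}: the $w$-integral factors as $\prod_j\int_0^\infty e^{-s_j(\beta r_j+2\pi i\theta_j n)}\,ds_j$) and $M_n(\mu_\nu)=\prod_{j=1}^k(\beta r_j+2\pi i\theta_jn)M_n(\nu)$. Hence $\mu\mapsto\nu_\mu$ and $\nu\mapsto\mu_\nu$ are mutually inverse on moments, and since a measure on $\ESS^d$ is determined by its moments $M_n$, they are mutually inverse maps; in particular $\mu\mapsto\nu_\mu$ is a bijection onto the solution set of \eqref{eq:ref sub-inv} (surjectivity: given such a $\nu$, $\nu_{\mu_\nu}=\nu$ by the moment identities). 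For the homeomorphism claim, both maps are weak*-continuous: $\mu\mapsto\nu_\mu$ because for fixed $f$ the right side of \eqref{measure nu of mu} is weak*-continuous in $\mu$ (dominated convergence in $w$), and the inverse is continuous being a continuous bijection between compact Hausdorff spaces once restricted to mass-bounded slices — or directly, since it is the inverse of a continuous bijection on each compact slice $\{\|\mu\|\le C\}$. The convexity (``simplex'') assertion about the solution set then follows since it is the affine-homeomorphic image of $M(\ESS^d)$.

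\emph{Main obstacle.} The delicate point is Part (b): justifying that the iterated limit in \eqref{measure mu of nu} exists as a \emph{measure} (not merely on trigonometric polynomials) and has the right mass. The clean route is the uniform-mass argument — the finite-$s$ functionals are positive with masses converging to $\|\nu\|\prod_j(\beta r_j)$, so by Banach–Alaoglu any weak* cluster point is a positive measure of that mass, and agreement on the dense subspace $\lsp\{g_n\}$ (where the explicit limit was computed) pins down the cluster point uniquely, giving genuine convergence. One must be a little careful that the limits are taken \emph{iteratively} ($s_k\to0$, then $s_{k-1}\to0$, \dots): at each stage the partially-limited object is still a positive functional of controlled mass on $C(\ESS^d)$, obtained from \eqref{eq:ref sub-inv} applied with the remaining coordinates of $s$ still positive and the limited coordinates set to~$0$, so the induction goes through one variable at a time.
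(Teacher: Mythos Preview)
Your argument is correct and considerably more direct than the paper's. Part~(a) matches the paper's proof (the paper phrases the final step via the inclusion--exclusion principle rather than indicator functions, but the content is identical). Parts~(b) and~(c) are where you diverge: the paper works with smooth functions and differentiation, establishing the iterated limit via an inductive formula involving partial derivatives (their Lemmas~5.5--5.6), and then proves $\nu_{\mu_\nu}=\nu$ by a separate ``peeling'' argument (their Lemma~5.7 and Proposition~5.8) that swaps a limit with an integral and then evaluates the resulting averaged integral. You instead compute everything on the characters $g_n$, where both maps act diagonally by explicit scalars, and then pass to general $f$ by the uniform-mass/density argument; the mutual-inverse claim in~(c) is then immediate from the moment identities. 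Your route is more elementary and gives the explicit moment formula $M_n(\nu_\mu)=\prod_j(\beta r_j-2\pi i\theta_j n)^{-1}M_n(\mu)$ for free, which the paper only extracts later when proving Theorem~2.7. The paper's approach, on the other hand, yields an explicit differential-operator description of $\mu_\nu$ on smooth functions (their formula~(5.9)), which your method does not.

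One small slip: $R_{y*}$ multiplies the $n$th moment by $e^{+2\pi i y^T n}$, not $e^{-2\pi i y^T n}$ (as your own parenthetical $g_n\circ R_y=e^{2\pi i y^T n}g_n$ shows), so the scalars throughout should read $\beta r_j-2\pi i\theta_j n$ rather than $\beta r_j+2\pi i\theta_j n$; this is cosmetic and does not affect the argument.
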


\begin{remark}\label{extesubinv}
A measure $\nu$ that satisfies the subinvariance relation~\eqref{eq:ref sub-inv} also
satisfies the analogous relation involving $\prod_{j\in J}(\id-e^{-\beta s_j
r_j}R_{s_j\theta_j^T*})$ for any subset $J$ of $\{1,\dots,k\}$. To see this, observe that
for any vector $y\in [0,\infty)^d$, $R_y$ is an isometric positivity-preserving linear
operator on $C(\ESS^d)$. Hence so are $R_{y*}$ and $e^{-\beta s_j r_j} R_{y*}$. Since the
numbers $-\beta r_j$ are negative, the series $\sum_{n=0}^\infty e^{-\beta
s_jr_jn}R_{y*}^n$ converges in norm in the Banach space of bounded linear operators on
$M(\ESS^d)$ to an inverse for $\id-e^{-\beta s_jr_j}R_{y*}$. Hence applying this inverse
allows us to remove factors from the subinvariance relation without losing positivity.
\end{remark}

\begin{remark}[Reality check]\label{reconciliation}
We reassure ourselves that the description of subinvariant measures in
Theorem~\ref{thm:sub-inv} is consistent with the description in \cite[Theorem~7.1]{BHS}.
There $d=k=1$, and they describe the simplex of subinvariant probability measures by
specifying the extreme points of the simplex.

We recall that the matrices $D_m\in M_1(\NN)=\NN$ and $E_m\in M_1(\NN)$ are all the same
integer $N\geq 2$, and the sequence $\theta_m$ then satisfies $N^2\theta_{m+1}=\theta_m$.
In terms of our generators, the dynamics  $\alpha:\RR\to \Aut B_m$ in \cite{BHS} is given
by
\[
\alpha_t(V_{m,p}U_{m,n}V_{m,q}^*)=e^{it(p-q)N^{-m}}V_{m,p}U_{m,n}V_{m,q}^*
\]
(see \cite[Proposition~6.3]{BHS}), which is our $\alpha^{r^m}$ with $r^m=N^{-m}$. We are
interested in KMS$_\beta$ states, so the subinvariant probability measures for
$(B_m,\alpha)$ are those in the set denoted $\Omega_{\sub}^r$ for $r=\beta
N^{-m}\theta_m^{-1}=\beta r^m\theta_m^{-1}$ (see \cite[Notation~6.8]{BHS}\footnote{The
displayed equation there is meant to say this, as opposed to $r=\beta N^{-m}\theta_m$,
which is the way we first read it.}).

Since the calculation in \cite{BHS} is about extreme points, we start with a point mass
$\delta_y\in P(\varprojlim \ESS)$. Then $(E^T_m)_*\delta_y$ is the point mass
$\mu_m=\delta_{y_m}$, where $y_m$ is obtained by realising $y$ as a sequence $\{y_m\}$
satisfying $Ny_{m+1}=y_m$. Then the measure $\nu_{\mu_m}$ in
Theorem~\ref{thm:sub-inv}\eqref{mutonu} is given by
\begin{align*}
\int f\, d\nu_{\mu_m}&=\int_0^\infty e^{-\beta wr^m}\int_0^1 f(x+\theta_mw)\,d\mu_m(x)\,dw\\
&=\int_0^\infty e^{-\beta wr^m}f(y_m+\theta_mw)\,dw.
\end{align*}
For $f(x)=e^{2\pi inx}$, we get
\[
\int f\,d\nu_{\mu_m}=e^{2\pi iny_m}\int_0^\infty e^{-\beta wr^m}e^{2\pi in\theta_mw}\,dw,
\]
and a change of variables gives
\begin{align*}
\int f\,d\nu_{\mu_m}&=e^{2\pi iny_m}\int_0^\infty e^{-\beta \theta_m^{-1}vr^m}e^{2\pi inv}\theta_m^{-1}\,dv\\
&=e^{2\pi iny_m}\theta_m^{-1}\int_0^\infty e^{-(\beta r^m\theta_m^{-1})v}e^{2\pi inv}\,dv.
\end{align*}
Now we recognise the integral as the Laplace transform of the periodic function $x\mapsto
e^{2\pi inx}$, and hence
\begin{equation}\label{reconcile}
\int f\,d\nu_{\mu_m}=e^{2\pi iny_m}\theta_m^{-1}\frac{1}{1-e^{-\beta r^m\theta_m^{-1}}}\int_0^1e^{-(\beta r^m\theta_m^{-1})v}e^{2\pi i nv}\,dv.
\end{equation}
In the notation of \cite{BHS}, we set $r:=\beta r^m\theta_m^{-1}$, and
rewrite~\eqref{reconcile} as
\begin{align*}
\int f\,d\nu_{\mu_m}&=e^{2\pi iny_m}\beta^{-1}N^m\int_0^1\frac{r}{1-e^{-r}}e^{-rv}e^{2\pi i nv}\,dv\\
&=\beta^{-1}N^m \int_0^1 e^{2\pi inv}\,d(R_{y_m})_*(m_r)(v).
\end{align*}

This shows that the measure $\nu_{\mu_m}$ is a multiple of the measure $(R_{y_m})_*(m_r)$
appearing in \cite[Theorem~7.1]{BHS}. We are off by the scalar $\beta^{-1}N^m$ because
that theorem is about the simplex of subinvariant \emph{probability} measures, and the
measures $\nu_{\mu}$ in Theorem~\ref{thm:sub-inv} have total mass $(\beta
r^m)^{-1}=\beta^{-1}N^m$.
\end{remark}

For the proof of Theorem~\ref{thm:sub-inv}(a), we need the following lemma, which is
known to probabilists as the \emph{inclusion-exclusion principle}. We couldn't find a
good reference for this measure-theoretic version,  but fortunately it is relatively easy
to prove by induction on the number $k$ of subsets.

\begin{lemma}\label{in-ex}
Suppose that $\lambda$ is a finite measure on a space $X$ and $\{S_j:1\leq j\leq k\}$ is
a finite collection of measurable subsets of $X$. For each subset $G$ of $\{1,\dots
,k\}$, we set $S_G:=\bigcap_{j\in G}S_j$. Then
\[
\lambda\big({\textstyle{\bigcup_{j=1}^k S_j}}\big)=\sum_{\emptyset\not=G\subset\{1,\dots,k\}} (-1)^{|G|-1}\lambda(S_G).
\]
\end{lemma}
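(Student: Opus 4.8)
The plan is to reduce this measure-theoretic statement to an elementary pointwise identity between indicator functions and then integrate. For $T\subseteq X$ write $\mathbf 1_T$ for its indicator function. I would start from the pointwise identity
\[
\mathbf 1_{\bigcup_{j=1}^k S_j}=1-\prod_{j=1}^k\big(1-\mathbf 1_{S_j}\big),
\]
valid on all of $X$ because a point lies in $\bigcup_j S_j$ precisely when at least one factor $1-\mathbf 1_{S_j}$ vanishes. Expanding the product and using $\prod_{j\in G}\mathbf 1_{S_j}=\mathbf 1_{S_G}$ for $\emptyset\neq G\subseteq\{1,\dots,k\}$ turns this into
\[
\mathbf 1_{\bigcup_{j=1}^k S_j}=\sum_{\emptyset\neq G\subseteq\{1,\dots,k\}}(-1)^{|G|-1}\mathbf 1_{S_G}.
\]
Because $\lambda$ is finite, every indicator function here is $\lambda$-integrable, so integrating both sides against $\lambda$ and invoking linearity of the integral produces the asserted formula. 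Finiteness of $\lambda$ is used precisely to guarantee that each of the finitely many terms $\lambda(S_G)$ is a real number, so that the alternating sum makes sense.

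An alternative I would keep in reserve, matching the hint in the statement, is induction on $k$. The case $k=1$ is the tautology $\lambda(S_1)=\lambda(S_1)$, and $k=2$ is finite additivity applied to the disjoint decomposition $S_1\cup S_2=(S_1\setminus S_2)\sqcup(S_2\setminus S_1)\sqcup(S_1\cap S_2)$. For the step from $k$ to $k+1$, set $A:=\bigcup_{j=1}^k S_j$ and apply the $k=2$ case to $A$ and $S_{k+1}$, then apply the inductive hypothesis to $\lambda(A)$ and to $\lambda(A\cap S_{k+1})=\lambda\big(\bigcup_{j=1}^k(S_j\cap S_{k+1})\big)$, noting that for the sets $S_j':=S_j\cap S_{k+1}$ one has $S_G'=S_{G\cup\{k+1\}}$. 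Collecting terms, every nonempty $H\subseteq\{1,\dots,k+1\}$ ends up contributing $(-1)^{|H|-1}\lambda(S_H)$: those with $k+1\notin H$ from the $\lambda(A)$ term, the singleton $\{k+1\}$ from $\lambda(S_{k+1})$, and those of the form $H=G\cup\{k+1\}$ from $-\lambda(A\cap S_{k+1})$, the extra sign being correct since $(-1)^{|H|-1}=-(-1)^{|G|-1}$.

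I expect no genuine obstacle here: the entire content is keeping track of the signs $(-1)^{|G|-1}$ against the Boolean lattice of subsets of $\{1,\dots,k\}$, with the only analytic ingredient being additivity (equivalently, linearity of the integral) for the finite measure $\lambda$. I would write up the indicator-function argument, since it isolates all the combinatorics in a single line and reduces the measure theory to one application of linearity.
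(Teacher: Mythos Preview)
Your proposal is correct. The paper does not actually write out a proof: it merely remarks that the result ``is relatively easy to prove by induction on the number $k$ of subsets,'' so in a sense there is nothing to compare against beyond that hint.

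Your primary route via the pointwise identity $\mathbf 1_{\bigcup_j S_j}=1-\prod_j(1-\mathbf 1_{S_j})$ is a genuinely different (and cleaner) argument than the induction the paper has in mind: it packages all the combinatorics into the algebraic expansion of a product and reduces the measure theory to a single linearity-of-the-integral step, whereas the inductive approach must repeatedly invoke the two-set case and track how subsets of $\{1,\dots,k+1\}$ split according to whether they contain $k+1$. Your backup induction sketch matches the paper's intended method and is also correct. Either write-up would serve; the indicator-function version is the more economical choice.
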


\begin{proof}[Proof of Theorem~\ref{thm:sub-inv}(a)]
We first claim that there is a positive functional $I$ on $C(\ESS^d)$ such that $I(f)$ is
given by the right-hand side of~\eqref{measure nu of mu}. Indeed, the estimate
\[
\Big|\int_{[0,\infty)^k}e^{ -\beta w^Tr}\int_{\ESS^d}f(x+\theta^Tw)\,d\mu(x)\,dw\Big|\leq \int_{[0,\infty)^k} e^{-\beta w^Tr}\int_{\ESS^d}\|f\|_\infty d\mu(x)dw,
\]
shows that  the right-hand side of~\eqref{measure nu of mu} determines a bounded function
$I:C(\ESS^d)\to \CC$. This function $I$ is linear because the integral is linear, and
$f\geq 0$ implies $I(f)\geq 0$ because all the integrands in~\eqref{measure nu of mu} are
non-negative. Thus there is a finite nonnegative measure $\nu_\mu$ satisfying
\eqref{measure nu of mu}. The norm of the integral is given by the total mass of the
measure $\nu_\mu$, which is
\[
\int_{\ESS^d}1\,d\nu_\mu=\int_{[0, \infty)^k} e^{-\beta w^Tr}\|\mu\|\,dw.
\]
To compute the exact value of the integral, observe that
\[
e^{-\beta w^Tr}=e^{-\beta\sum_{j=1}^kw_jr_j}=\prod_{j=1}^k e^{-\beta w_jr_j}.
\]
Thus
\begin{align*}
\|\nu_\mu\|&=\|\mu\|\int_{[0,\infty)^k}\prod_{j=1}^k e^{-\beta w_jr_j}dw=\|\mu\|\prod_{j=1}^k\int_0^\infty e^{-\beta w_jr_j} dw_j=\|\mu\|\prod_{j=1}^k(\beta r_j)^{-1}.
\end{align*}
This proves the assertions in the first sentence of part (a).

For the subinvariance relation, we fix $f\in C(\ESS^d)_+$, and aim to prove that
\begin{equation*}
\int_{\ESS^d}f\,d\Big(\prod_{j=1}^k\big(\id - e^{- \beta s_jr_j} R_{s_j\theta_j*}\big)(\nu_\mu)\Big) \geq 0.
\end{equation*}
As in the proof of Theorem~\ref{subinv}, we write
\[
\prod_{j=1}^k\big(\id - e^{- \beta s_jr_j} R_{s_j\theta_j*}\big)
=\id+\sum_{\emptyset\not=G\subset\{1\leq j\leq k\}}(-1)^{|G|}e^{-\beta s_G^Tr}R_{\theta^Ts_G*},
\]
with $s_G:=\sum_{j\in G}s_je_j$. For $j\leq k$ we define $S_j=\{v\in [0,\infty):v_j\geq
s_j\}$, and $S_G := \bigcap_{j\in G}S_{j}$. Then
\begin{align}
\int_{\ESS^d}f\,d\big(e^{- \beta s_G^Tr}&R_{\theta^Ts_G*}\big)(\nu_\mu)
=\int_{\ESS^d}e^{- \beta s_G^Tr}\big(f\circ R_{\theta^Ts_G}\big)\,d\nu_\mu\label{intG}\\
&=\int_{[0,\infty)^k}e^{ -\beta w^Tr}e^{- \beta s_G^Tr}\int_{\ESS^d}f(x+\theta^Tw+\theta^Ts_G)\,d\mu(x)\,dw\notag\\
&=\int_{S_G}e^{ -\beta v^Tr}\int_{\ESS^d}f(x+\theta^Tv)\,d\mu(x)\,dv.\notag
\end{align}

Since $f$ is fixed, we can define a measure $m$ on $[0,\infty)^k$ by
\[
\int_{[0,\infty)^k} g\,dm=\int_{[0,\infty)^k}g(v)e^{ -\beta v^Tr}\int_{\ESS^d}f(x+\theta^Tv)\,d\mu(x)\,dv.
\]
Now~\eqref{intG} says that
\[
\int_{\ESS^d}f\,d\big(e^{- \beta s_G^Tr}R_{\theta^Ts_G*}\big)(\nu_\mu)
=m(S_G).
\]
Thus
\[
\int_{\ESS^d}f\,d\Big(\prod_{j=1}^k\big(\id - e^{- \beta s_jr_j} R_{s_j\theta_j*}\big)(\nu_\mu)\Big)=m([0,\infty)^k)+\sum_{\emptyset\not=G\subset\{1\leq j\leq k\}}(-1)^{|G|}m(S_G).
\]
By the inclusion-exclusion principle, this is
\[
m([0,\infty)^k)-m\big(\textstyle{\bigcup_{j=1}^k S_j}\big)=m\big({\textstyle\prod_{j=1}^k}[0,s_j)\big) \ge 0.\qedhere
\]
\end{proof}

We now move towards a proof of part (b), and for that the first problem is to prove that
the iterated limit in~\eqref{measure mu of nu} exists. We will work with $l$ satisfying
$1\leq l\leq k$, and show by induction on $l$ that the iterated limit
\[
\lim_{s_l\to 0^+}\cdots\lim_{s_1\to 0^+}
\]
exists. We will be doing some calculus, so we often assume that our test functions $f$
belong to the dense subalgebra $C^\infty(\ESS^d)$ of $C(\ESS^d)$ consisting of smooth
functions all of whose derivatives are also periodic.

We begin by establishing that, even after dividing by the numbers which are going to $0$,
the norms of the measures remain uniformly bounded.

\begin{lemma}\label{lsnorm-bounded}
Suppose that $\nu$ is a finite positive measure on $\ESS^d$ satisfying the subinvariance
relation~\eqref{eq:ref sub-inv}. Then for each $s\in (0,\infty)^k$,
\[
\lambda_s:=\frac{1}{s_ks_{k-1}\cdots s_1}\prod_{j=1}^k\big(\id - e^{- \beta s_jr_j} R_{s_j\theta_j^T*}\big)(\nu)
\]
is a positive measure with total mass
\begin{align}\label{estlambdas}
\|\lambda_s\|\leq \Big(\prod_{j=1}^k(\beta r_j)\Big)\|\nu\|.
\end{align}
\end{lemma}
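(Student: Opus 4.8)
The plan is to evaluate the signed measure $\prod_{j=1}^k(\id - e^{-\beta s_j r_j}R_{s_j\theta_j^T*})(\nu)$ against the constant function $1$ and bound its total variation, using that we already know (by the subinvariance relation~\eqref{eq:ref sub-inv} together with Remark~\ref{extesubinv}) that every ``partial product'' over a subset $J\subseteq\{1,\dots,k\}$ applied to $\nu$ is again a positive measure. First I would observe that positivity of $\lambda_s$ is immediate: $\prod_{j=1}^k(\id - e^{-\beta s_j r_j}R_{s_j\theta_j^T*})(\nu)\geq 0$ by~\eqref{eq:ref sub-inv}, and dividing by the positive scalar $s_k\cdots s_1$ preserves positivity. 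Hence $\|\lambda_s\| = \lambda_s(\ESS^d) = \int_{\ESS^d} 1\, d\lambda_s$, so it suffices to compute the latter.

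Next I would exploit that $R_{y*}$ is mass-preserving: $\int_{\ESS^d} 1\, dR_{y*}(\mu) = \int_{\ESS^d} 1\, d\mu = \|\mu\|$ for any positive measure $\mu$ and any $y$. Therefore, expanding the product as $\id + \sum_{\emptyset\neq G\subseteq\{1,\dots,k\}}(-1)^{|G|}e^{-\beta s_G^Tr}R_{\theta^Ts_G*}$ (as in the proofs of Theorem~\ref{subinv} and Theorem~\ref{thm:sub-inv}(a)) and integrating $1$ against it, each term contributes $(-1)^{|G|}e^{-\beta s_G^Tr}\|\nu\|$, so that
\[
\int_{\ESS^d} 1\, d\Big(\prod_{j=1}^k\big(\id - e^{-\beta s_jr_j}R_{s_j\theta_j^T*}\big)(\nu)\Big)
= \|\nu\|\prod_{j=1}^k\big(1 - e^{-\beta s_j r_j}\big).
\]
Dividing by $s_k\cdots s_1$ gives $\|\lambda_s\| = \|\nu\|\prod_{j=1}^k\big(1-e^{-\beta s_j r_j}\big)/s_j$. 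Now the elementary inequality $1 - e^{-x}\leq x$ for $x\geq 0$ yields $\big(1-e^{-\beta s_j r_j}\big)/s_j \leq \beta r_j$ for each $j$, and multiplying these bounds gives exactly~\eqref{estlambdas}.

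I do not expect a serious obstacle here: the only subtlety is being careful that ``$\int 1\,d$'' of the signed measure really does distribute over the finite expansion of the product, which is legitimate because each $R_{y*}$ is a bounded linear operator on the finite-dimensional-in-spirit span and all the individual pieces $e^{-\beta s_G^Tr}R_{\theta^Ts_G*}(\nu)$ are genuine finite positive measures. The one place to take a little care is that the final bound should be stated for $s\in(0,\infty)^k$ so that division by $s_j$ makes sense; the estimate $1-e^{-x}\leq x$ is the clean replacement for any limiting argument. An alternative, if one prefers to avoid even the tiny expansion bookkeeping, is to note inductively that $(\id - e^{-\beta s_j r_j}R_{s_j\theta_j^T*})$ decreases total mass of a positive measure by exactly a factor $(1-e^{-\beta s_j r_j})$, which gives the product formula for $\|\lambda_s\|\cdot s_k\cdots s_1$ directly; but the expansion route is the one most consistent with the surrounding arguments.
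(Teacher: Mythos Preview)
Your argument is correct and follows essentially the same line as the paper: both use that $R_{y*}$ preserves total mass to reduce $\int 1\,d\lambda_s$ to $\|\nu\|\prod_j(1-e^{-\beta s_j r_j})/s_j$, and then bound each factor by $\beta r_j$ (you via $1-e^{-x}\le x$, the paper via the mean value theorem). The only cosmetic difference is that the paper peels off one factor at a time using the intermediate measures $\sigma_l=\prod_{j\ge l}(\id-e^{-\beta s_j r_j}R_{s_j\theta_j^T*})(\nu)$---exactly the inductive alternative you mention in your final paragraph---whereas you expand the full product at once and obtain the closed-form expression for $\|\lambda_s\|$ before estimating.
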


\begin{proof}
The subinvariance relation implies that the measure is positive.  For the estimate on the
total mass of $\lambda_s$, we deal with the variables $s_i$ separately. So for $1\leq
l\leq k$, we set
\[
\sigma_l:=\prod_{j=l}^k\big(\id-e^{-\beta s_jr_j}R_{s_j\theta_{j}^T*}\big)(\nu),
\]
which by Remark~\ref{extesubinv} are all positive measures. We have
\begin{align*}
\int_{\ESS^d}1\,d\Big(\prod_{j=1}^k\big(\id - e^{- \beta s_jr_j} R_{s_j\theta_j^T*}\big)(\nu)\Big)
    &=\int_{\ESS^d}1\,d\sigma_1\\
    &=\int_{\ESS^d}1\,d\big((\id - e^{- \beta s_1r_1} R_{s_1\theta_1^T*})(\sigma_2)\big)\notag\\
    &=\int_{\ESS^d}1\circ\big(\id - e^{- \beta s_1r_1} R_{s_1\theta_1^T}\big)\,d\sigma_2\notag\\
    &=\int_{\ESS^d} (1-e^{-\beta s_1r_1})\,d\sigma_2.
\end{align*}
So for all $s_1>0$,
\begin{equation}\label{estsigma1}
\frac{1}{s_1}\int_{\ESS^d}1\,d\Big(\prod_{j=1}^k\big(\id - e^{- \beta s_jr_j} R_{s_j\theta_j^T*}\big)(\nu)\Big)
    = \int_{\ESS^d}\frac{1-e^{-\beta s_1r_1}}{s_1}\,d\sigma_2.
\end{equation}
The integrand here is
\[
\frac{1-e^{-\beta s_1r_1}}{s_1}=\frac{f(0)-f(s_1)}{s_1}\quad\text{for $f(s_1):=e^{-\beta s_1r_1}$.}
\]
Hence for each fixed $s_1>0$, the mean value theorem implies that there exists $c\in
(0,s_1)$ such that
\[
\frac{1-e^{-\beta s_1r_1}}{s_1}=-f'(c)=-(-\beta r_1e^{-\beta cr_1}),
\]
which is a positive number less than $\beta r_1$. Thus~\eqref{estsigma1} is at most
$\beta r_1\|\sigma_2\|$.

Now we repeat this argument, first to see that
\[
\frac{1}{s_2}\int_{\ESS^d}1\,d\big(\id - e^{- \beta s_2r_2} R_{s_2\theta_2^T*}\big)(\sigma_3)
\]
has mass at most $\beta r_2\|\sigma_3\|$. After $k-2$ more steps, we arrive at  the
estimate~\eqref{estlambdas}.
\end{proof}

\begin{lemma}\label{|L|=1}
Suppose that $1\leq j\leq k$ and that $\lambda\in M(\ESS^d)$ satisfies
\[
\big(\id-e^{-\beta sr_j}R_{s\theta_j^T*}\big)(\lambda)\geq 0\quad\text{for all $s>0.$}
\]
Then for all $f\in C^\infty(\ESS^d)$, we have
\begin{equation}\label{l=1}
\lim_{s \to 0^+}\Big(
\frac{1}{s}\int_{\ESS^d}f\,d(\id-e^{-\beta sr_j}R_{s\theta_j^T*})(\lambda)\Big) = \beta r_j\int_{\ESS^d}f\,d\lambda -\int_{\ESS^d}\theta_j^T(\nabla f)\,d\lambda.
\end{equation}
\end{lemma}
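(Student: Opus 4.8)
The plan is to compute the limit directly by expanding the integrand to first order in $s$. First I would write out
\[
\frac{1}{s}\int_{\ESS^d}f\,d(\id-e^{-\beta sr_j}R_{s\theta_j^T*})(\lambda)
  =\int_{\ESS^d}\frac{f-e^{-\beta sr_j}(f\circ R_{s\theta_j^T})}{s}\,d\lambda,
\]
using that $R_{s\theta_j^T*}$ is the dual of $R_{s\theta_j^T}^*$. The integrand is, pointwise in $x\in\ESS^d$, the difference quotient
\[
\frac{g(0)-g(s)}{s}\quad\text{for }g(s):=e^{-\beta sr_j}f(x+s\theta_j^T),
\]
whose derivative at $0$ is $g'(0)=-\beta r_jf(x)+\theta_j^T(\nabla f)(x)$ (chain rule, viewing $s\mapsto x+s\theta_j^T$ as a line with direction $\theta_j^T\in\RR^d$). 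So the pointwise limit of the integrand as $s\to 0^+$ is $\beta r_jf(x)-\theta_j^T(\nabla f)(x)$, which integrates to the right-hand side of~\eqref{l=1}.

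The main obstacle is justifying the interchange of limit and integral; this is where the smoothness hypothesis $f\in C^\infty(\ESS^d)$ is used. I would handle it by producing a uniform (in $s$, for $s$ in a bounded interval $(0,s_0]$) bound on the difference quotient $\big(g(0)-g(s)\big)/s=-g'(c_{x,s})$ for some $c_{x,s}\in(0,s)$ by the mean value theorem applied to the real and imaginary parts separately; since $|g'(c)|=|{-\beta r_je^{-\beta cr_j}f(x+c\theta_j^T)+e^{-\beta cr_j}\theta_j^T(\nabla f)(x+c\theta_j^T)}|\le \beta r_j\|f\|_\infty+|\theta_j^T|\,\|\nabla f\|_\infty$ for all $c\in(0,s_0]$ and all $x$, the difference quotients are uniformly bounded. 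As $\lambda$ is a finite measure on the compact space $\ESS^d$, the dominated convergence theorem then gives the claimed limit. (One also needs the pointwise convergence to be genuine: since $g$ is $C^1$ with $g'$ continuous, $\big(g(0)-g(s)\big)/s\to g'(0)$ as $s\to 0^+$ for each fixed $x$, and this holds uniformly in $x$ by uniform continuity of $(x,c)\mapsto g'$, so one could even avoid dominated convergence and pass the limit through using uniform convergence of the integrands against the finite measure $\lambda$.)

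A couple of bookkeeping points to get right: the positivity hypothesis $(\id-e^{-\beta sr_j}R_{s\theta_j^T*})(\lambda)\ge 0$ is not actually needed for the limit computation itself — it is stated presumably because in the application $\lambda$ will be one of the intermediate measures $\sigma_{l+1}$ from Lemma~\ref{lsnorm-bounded}, which are positive — but I would note that the argument above works for any finite (signed or complex) measure $\lambda$, writing $\lambda$ as a combination of positive measures if one wants to stay within the stated framework. Also I should be careful that $\nabla f$ is the gradient in the $\RR^d$ variable and $\theta_j^T(\nabla f)$ denotes the directional derivative of $f$ in the direction $\theta_j^T$ (consistent with the paper's convention that $\theta_j$ is the $j$th row of $\theta$, so $\theta_j^T\in\RR^d$ is a column vector), so that the chain-rule computation $\frac{d}{ds}f(x+s\theta_j^T)\big|_{s=0}=\theta_j^T(\nabla f)(x)$ is exactly the term appearing on the right of~\eqref{l=1}. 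Finally, since both sides of~\eqref{l=1} are continuous linear in $f$ on $C^\infty(\ESS^d)$ with the relevant $C^1$-type seminorms, no further density argument is needed here — the statement is only claimed for smooth $f$, which is all that the inductive argument for part~(b) will require.
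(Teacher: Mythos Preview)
Your proposal is correct and follows essentially the same approach as the paper: both rewrite the quotient as $(g(x,0)-g(x,s))/s$ for $g(x,s)=e^{-\beta sr_j}f(x+s\theta_j^T)$, compute $\partial g/\partial s$, bound it uniformly by $\beta r_j\|f\|_\infty + |\theta_j^T|\,\|\nabla f\|_\infty$, and use this to pass the limit through the integral. The only cosmetic difference is that the paper packages the interchange as ``differentiation under the integral sign'' (citing Folland's Theorem~2.27), whereas you spell out the mean-value-theorem plus dominated-convergence argument directly; you also correctly observe that the subinvariance hypothesis on $\lambda$ is not actually used in this lemma.
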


\begin{proof}
Let $g : \ESS^d \times \RR \to \CC$ be the function $g(x,s)=e^{-\beta
sr_j}f(x+s\theta_j^T)$. The term on the left of~\eqref{l=1} can be rewritten as
\[
\frac{1}{s}\int_{\ESS^d}\big(f(x)-e^{-\beta sr_j}f(x+s\theta_j^T)\big)\,d\lambda(x)=\frac{1}{s}\int_{\ESS^d}(g(x,0)-g(x,s))\,d\lambda(x).
\]
So we want to show that the function $G$ defined by
$G(s):=\int_{\ESS^d}g(x,s)\,d\lambda(x)$ is differentiable at $0$ with $-G'(0)$ equal to
the right-hand side of~\eqref{l=1}.

We compute
\[
\frac{\partial g}{\partial s}(x,s)=-\beta r_je^{-\beta sr_j}f(x+s\theta_j^T)+ e^{-\beta sr_j}\theta_j^T\nabla f(x+s\theta_j^T).
\]
The Cauchy-Schwarz inequality for the inner product $\theta_j^T(\nabla
f)=(\theta_j\,|\,\nabla f)$ then gives
\begin{equation}
\Big|\frac{\partial g}{\partial s}(x,s)\Big|
\leq \beta r_j\|f\|_\infty +\|\theta_j^T\|_2\|\nabla f(x+s\theta_j^T)\|_2.
\end{equation}
The right-hand side is uniformly bounded on $\ESS^d$, and hence there is an integrable
function on $\ESS^d$ that dominates the right-hand side for all $s\in [0,1]$, say. Thus we
can differentiate under the integral sign, using Theorem~2.27 of \cite{Fo}, for example.
We deduce that $G$ is differentiable on $[0,1]$ with derivative
\[
G'(s)=\int_{\ESS^d}\big(-\beta r_je^{-\beta sr_j}f(x+s\theta_j^T)+ e^{-\beta sr_j}\theta_j^T\nabla f(x+s\theta_j^T)\big)\,d\lambda(x).
\]
Taking $s=0$ gives the negative of the right-hand side of~\eqref{l=1}, as required.
\end{proof}

Our next step is the inductive argument, which is quite a complicated one. As a point of
notation, for each tuple $I=\{i_1,\dots,i_m\}$ with entries in $\{1,2,\dots, k\}$, and
for $f\in C^\infty(\ESS^d)$, we write $|I|:=m$ and $D_If$ for the partial derivative
\[
D_If:=\frac{\partial^mf}{\partial x_{i_1}\partial x_{i_2}\cdots\partial x_{i_m}}.
\]

\begin{lemma}\label{limitsexist}
Suppose that $\nu$ is a positive measure on $\ESS^d$ satisfying the subinvariance
relation~\eqref{eq:ref sub-inv}. Let $1 \le l \le k$.
\begin{itemize}
\item[(a)] The iterated limit
\begin{align*}
\lim_{s_l\rightarrow 0^+}\cdots \lim_{s_1\rightarrow 0^+}\frac{1}{s_l\cdots s_1}\int_{\ESS^d} f\, d \big(\textstyle{\prod_{j=1 }^l}(\id &- e^{-\beta s_jr_j} R_{s_j\theta_j^T*})\big)(\nu)
\end{align*}
exists for all $f\in C(\ESS^d)$.

\item[(b)] Write
\[
\Sigma_l:={\textstyle\bigcup_{n=0}^l}\{1,\dots,k\}^n.
\]
Then there are real scalars $\{K^l_I:I\in \Sigma_l\}$ such that
$K^l_\emptyset=\prod_{j=1}^l(\beta r_{j})$, and: for every $f\in C^\infty(\ESS^d)$ and
for every measure $\nu$ on $\ESS^d$ satisfying the subinvariance
relation~\eqref{eq:ref sub-inv}, the limit in \textnormal{(a)}  is
\begin{equation}\label{formformunu}
\int_{\ESS^d}\Big(\sum_{I\in\Sigma_l}K^l_ID_If\Big)\,d\nu.
\end{equation}
\end{itemize}
\end{lemma}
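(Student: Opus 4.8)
The plan is to prove both parts (a) and (b) simultaneously by induction on $l$, since the explicit formula \eqref{formformunu} is exactly what makes the existence of the next limit tractable. The base case $l=1$ is essentially Lemma~\ref{|L|=1}: for $f\in C^\infty(\ESS^d)$ the limit exists and equals $\beta r_1\int f\,d\nu - \int \theta_1^T(\nabla f)\,d\nu$, which has the form \eqref{formformunu} with $K^1_\emptyset = \beta r_1$ and $K^1_{(i)} = -(\theta_1)_i$ (writing $\theta_1^T\nabla f = \sum_i (\theta_1)_i D_{(i)}f$). For general $f\in C(\ESS^d)$ one passes from the smooth case by a density and uniform-boundedness argument: Lemma~\ref{lsnorm-bounded} with $k=1$ gives that the measures $\lambda_s := s_1^{-1}(\id - e^{-\beta s_1 r_1}R_{s_1\theta_1^T*})(\nu)$ have total mass bounded by $\beta r_1\|\nu\|$ uniformly in $s_1 > 0$; an $\varepsilon/3$-argument then shows the limit exists for all continuous $f$ (even though its value for non-smooth $f$ is not given by the derivative formula).

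For the inductive step, assume the statement for $l-1$. Fix $f\in C^\infty(\ESS^d)$ and first take the inner $l-1$ limits. The key observation is that the operator $\id - e^{-\beta s_l r_l}R_{s_l\theta_l^T*}$ commutes with all the operators $\id - e^{-\beta s_j r_j}R_{s_j\theta_j^T*}$ for $j < l$, so we can write
\[
\frac{1}{s_l\cdots s_1}\prod_{j=1}^l(\id - e^{-\beta s_jr_j}R_{s_j\theta_j^T*})(\nu)
= \frac{1}{s_l}(\id - e^{-\beta s_lr_l}R_{s_l\theta_l^T*})\Big(\frac{1}{s_{l-1}\cdots s_1}\prod_{j=1}^{l-1}(\cdots)(\nu)\Big).
\]
Integrating $f$ against this, the operator $\id - e^{-\beta s_lr_l}R_{s_l\theta_l^T*}$ acts on the $f$ side as $\id - e^{-\beta s_lr_l}R_{s_l\theta_l^T}$ (pulled back), producing the function $f - e^{-\beta s_lr_l}(f\circ R_{s_l\theta_l^T})$, which is again smooth and depends continuously (in $C^\infty$, hence in every $C^m$-norm) on $s_l$. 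So by the inductive hypothesis applied to this family of test functions, the inner $l-1$ limits converge, for each fixed $s_l > 0$, to
\[
\frac{1}{s_l}\int_{\ESS^d}\Big(\sum_{I\in\Sigma_{l-1}}K^{l-1}_I D_I\big(f - e^{-\beta s_lr_l}(f\circ R_{s_l\theta_l^T})\big)\Big)\,d\nu.
\]
Since $D_I(f\circ R_{s_l\theta_l^T}) = (D_If)\circ R_{s_l\theta_l^T}$, this rearranges to $\sum_I K^{l-1}_I \cdot \frac{1}{s_l}\int (D_If - e^{-\beta s_lr_l}(D_If)\circ R_{s_l\theta_l^T})\,d\nu$. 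Now I want to take $s_l \to 0^+$; but I cannot directly cite Lemma~\ref{|L|=1} term-by-term because $\nu$ need not satisfy the one-variable subinvariance relation associated to the index $j = l$. The fix is to apply Lemma~\ref{|L|=1} not to $\nu$ but to $\sigma := \prod_{j=1}^{l-1}(\id - e^{-\beta s_jr_j}R_{s_j\theta_j^T*})(\nu)$ — wait, that still has $s_j$'s in it — so instead one works at the level of the whole inner limit: set $\mu^{(l-1)}$ to be the measure representing the functional $g \mapsto \int(\sum_{I\in\Sigma_{l-1}}K^{l-1}_I D_Ig)\,d\nu$ on $C^\infty$, show (using Lemma~\ref{lsnorm-bounded}) that this extends to a genuine finite \emph{signed} measure, verify that it inherits the one-variable subinvariance relation for the index $l$ (because each inner operator commutes with the $l$-th one and preserves that positivity, via Remark~\ref{extesubinv}), and then apply Lemma~\ref{|L|=1} to $\mu^{(l-1)}$ and the test function $f$. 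This yields
\[
\beta r_l\int f\,d\mu^{(l-1)} - \int \theta_l^T(\nabla f)\,d\mu^{(l-1)},
\]
which, unwinding $\mu^{(l-1)}$, equals $\int(\sum_{I\in\Sigma_{l-1}}K^{l-1}_I(\beta r_l D_If - \sum_{i=1}^d(\theta_l)_i D_{(i)}D_If))\,d\nu$. Relabelling, this is of the form \eqref{formformunu} with $\Sigma_l = \bigcup_{n=0}^l\{1,\dots,k\}^n$ and with explicit scalars: $K^l_I = \beta r_l K^{l-1}_I$ when $|I| < l$ and the last coordinate machinery adds $-(\theta_l)_{i}K^{l-1}_{I'}$ contributions when $I = I'\!\cdot\!(i)$; in particular $K^l_\emptyset = \beta r_l K^{l-1}_\emptyset = \prod_{j=1}^l(\beta r_j)$, as required. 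Finally, for general $f \in C(\ESS^d)$, existence of the iterated limit follows from the $C^\infty$ case plus the uniform norm bound of Lemma~\ref{lsnorm-bounded} by the same $\varepsilon/3$-argument as in the base case.

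The main obstacle is the bookkeeping in the inductive step — specifically, making rigorous the claim that the ``inner limit functional'' $g\mapsto\int(\sum_{I\in\Sigma_{l-1}}K^{l-1}_I D_Ig)\,d\nu$, a priori only defined on smooth functions via unbounded differential operators, is represented by an honest finite signed measure that still satisfies the one-variable subinvariance relation in the $l$-th direction, so that Lemma~\ref{|L|=1} applies. The cleanest route is probably to avoid isolating that functional as a ``measure'' at all: instead, carry the full family $\sigma_2(s_1,\dots,s_{l-1}) := \prod_{j=1}^{l-1}(\id - e^{-\beta s_jr_j}R_{s_j\theta_j^T*})(\nu)$ through, note by Remark~\ref{extesubinv} that each such $\sigma_2$ satisfies the $l$-th one-variable relation (the factors commute and the removed-factor argument of Remark~\ref{extesubinv} works for any subset including one containing index $l$), apply Lemma~\ref{|L|=1} with $\lambda = \sigma_2$ to exchange the $s_l\to 0$ limit with the inner limits via a double-limit / uniform convergence argument (uniform in the inner parameters thanks to the mass bound $\|\sigma_2\|\le\prod_{j<l}(\cdots)$... actually one needs uniformity as $s_j\to 0$, which again comes from Lemma~\ref{lsnorm-bounded}), and only \emph{then} evaluate the now-innermost limits using the inductive hypothesis. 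Either way, the heart of the matter is justifying one interchange of limits, and the uniform total-mass bounds from Lemma~\ref{lsnorm-bounded} are precisely what make that interchange legitimate.
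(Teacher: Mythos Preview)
Your first approach---pull back the $l$th factor to the test-function side, apply the inductive hypothesis to $\nu$ with test function $f - e^{-\beta s_lr_l}(f\circ R_{s_l\theta_l^T})$, then take $s_l\to 0^+$ term by term via Lemma~\ref{|L|=1}---is correct and complete. The obstacle you raise, that ``$\nu$ need not satisfy the one-variable subinvariance relation associated to the index $j=l$,'' is false: the hypothesis of the lemma is that $\nu$ satisfies the full $k$-fold relation~\eqref{eq:ref sub-inv}, and Remark~\ref{extesubinv} says exactly that this implies the relation for any subset $J\subset\{1,\dots,k\}$, in particular $J=\{l\}$. So Lemma~\ref{|L|=1} applies directly with $\lambda=\nu$, $j=l$, and test function $D_If$ (which is smooth). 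No fix is needed.

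This is essentially the paper's argument, dualized. The paper keeps $f$ fixed and modifies the measure: it sets $\lambda:=(\id - e^{-\beta s_{l}r_{l}}R_{s_{l}\theta_{l}^T*})(\nu)$, checks via Remark~\ref{extesubinv} that $\lambda$ is positive and satisfies the $(l-1)$-fold relation, applies the inductive hypothesis to $\lambda$ with test function $f$, and then invokes Lemma~\ref{|L|=1} with the original measure for the outer limit. Your version keeps $\nu$ fixed and modifies the test function. Both routes land on the same computation $\beta r_l\int (\sum_I K^{l-1}_I D_If)\,d\nu - \int \theta_l^T\nabla(\sum_I K^{l-1}_I D_If)\,d\nu$, and the same recursion for the $K^l_I$. (The paper's route does require a mildly strengthened inductive hypothesis---the formula for measures satisfying only the $(l-1)$-fold relation rather than the full $k$-fold one---whereas yours does not; but this is harmless.)

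By contrast, the two ``fixes'' you propose after the false alarm both introduce real work that is not needed. The signed-measure route requires you to observe that the proof (not the statement) of Lemma~\ref{|L|=1} never uses positivity, and to argue that the limit functional is a measure in the first place. The limit-interchange route is worse: uniform mass bounds alone do \emph{not} justify exchanging iterated limits---you would need uniform convergence in one of the variables, which you have not established. So your final paragraph identifies the wrong ``heart of the matter''; the actual heart is a single direct application of Remark~\ref{extesubinv} that you overlooked.
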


\begin{proof}
We prove by induction on $l$ that the limit in (a) exists for every $f\in
C^\infty(\ESS^d)$, and that there exist the scalars $K^l_I$. Then, since we know from
Lemma~\ref{lsnorm-bounded} that the measures $\lambda_s$ are norm-bounded by
$\big(\prod_{j=1}^k(\beta r_j)\big)\|\nu\|$ and $C^\infty(\ESS^d)$ is norm dense in
$C(\ESS^d)$, we get convergence in (a) also for $f\in C(\ESS^d)$.

When $l=1$, the index set $\Sigma_1$ consists of the empty set $\emptyset$ and the
one-point sets $\{j\}$. Lemma~\ref{|L|=1} implies that $K^1_\emptyset=\beta r_1$ and
$K^1_{\{j\}}= \theta_1^Te_j=\theta_{1j}$.

We fix $l$ between $1$ and $k-1$, and suppose as our inductive hypothesis that for every
measure $\lambda$ such that
\begin{equation}\label{subinvforl}
\prod_{j=1}^l\big(\id-e^{-\beta s_jr_j}R_{s_j\theta_{j}^T*}\big)(\lambda)\geq 0\quad\text{for all $s\in [0,\infty)^k$,}
\end{equation}
we have such scalars $\{K_I^l\}$ parametrised by $I\in \Sigma_l$. We now have to start
with a measure $\kappa$ that satisfies
\begin{equation}\label{subinvl+1}\prod_{j=1}^{l+1}\big(\id-e^{-\beta s_jr_j}R_{\theta_{j}^T*}\big)(\kappa)\geq 0\quad\text{for all $s\in [0,\infty)^k$,}
\end{equation}
and find suitable scalars $K^{l+1}_I$.

We define
\[
\lambda:=\big(\id-e^{-\beta s_{l+1}r_{l+1}}R_{s_{l+1}\theta_{l+1}^T*}\big)(\kappa).
\]
Remark~\ref{extesubinv} reassures us that $\lambda$ is another positive measure,
and~\eqref{subinvl+1} implies that it satisfies~\eqref{subinvforl}. The induction
hypothesis gives
\begin{align*}
L(s_{l+1}):=\lim_{s_{l}\rightarrow 0^+}\cdots \lim_{s_{1}\rightarrow 0^+}\frac{1}{s_{l+1}\cdots s_{1}}\int_{\ESS^d} f\, d&\Big(\prod_{j=1 }^{l+1}\big(\id - e^{-\beta s_jr_j } R_{s_j\theta^T_j*}\big)\Big)(\kappa)\\
&=\frac{1}{s_{l+1}}\bigg(\int_{\ESS^d}\Big(\sum_{I\in\Sigma_l}K_I^lD_If\Big)\,d\lambda\bigg).
\end{align*}
Lemma~\ref{|L|=1} implies that
\begin{align*}
\lim_{s_{l+1}\to 0+}L(&s_{l+1})=
\beta r_{l+1}\int_{\ESS^d}\Big(\sum_{I\in \Sigma_l}K_I^lD_I f\Big)\,d\lambda-\int_{\ESS^d}\theta^T_{l+1}\nabla\Big(\sum_{I\in \Sigma_l}K_I^lD_I f\Big)\, d\lambda\\
&= \beta r_{l+1}  \int_{\ESS^d}\Big(\sum_{I\in \Sigma_l}K_I^lD_I f\Big)\,d\lambda -\int_{\ESS^d} \Big(\sum_{I\in \Sigma_l}\sum_{i=1}^d K_I^l\theta_{l+1,i}\frac{\partial D_{I}f}{\partial x_i}\Big)\, d\lambda(x).
\end{align*}
To finish off the inductive step, we set $K_\emptyset^{l+1}=\beta r_{l+1} K^l_\emptyset$,
and for $I'=(I,i_{|I|+1})$ we set
\begin{align*}
K^{l+1}_{I'}=\begin{cases}
K^l_{I}\theta_{(l+1)i_{l+1}}&\text{ if } |I|=l\\
\beta r_{l+1}K^l_{I'} -K_{I}^l\theta_{l+1,i_{|I|+1}}&\text{ if } |I|<l.
\end{cases}
\end{align*}
This completes the inductive step, and hence the proof.
\end{proof}

\begin{proof}[Proof of Theorem~\ref{thm:sub-inv}(b)]
Lemma~\ref{limitsexist} shows that the limit exists for all $f\in C(\ESS^d)$, and for
$f\in C^\infty(\ESS^d)$ gives us a formula for the limit. The limit is linear in $f$,
positive when $f$ is, and is bounded by $\|f\|_\infty \big(\prod_{j=1}^k(\beta
r_j)\big)\|\nu\|$. Thus it is given by a finite positive measure $\mu_\nu$. Since the
total mass of the measure is integration against the constant function $1$, and since $1$
is smooth, the total mass is given by~\eqref{formformunu}. But since all derivatives of
$1$ are zero, the only nonzero terms are the ones on which $I=\emptyset$.  Now the
formula for $K^k_\emptyset$ implies that $\|\mu_\nu\|=\big(\prod_{j=1}^k(\beta
r_j)\big)\|\nu\|$.
\end{proof}

We now work towards the proof of Theorem~\ref{thm:sub-inv}(c). To prove that
$N:\mu\mapsto \nu_\mu$ is a bijection of the measures arising from KMS$_\beta$ states
onto the subinvariant measures, we prove that $N$ is one-to-one and that $M:\nu\mapsto
\mu_\nu$ satisfies $N\circ M(\nu)=\nu$ for all subinvariant measures~$\nu$. We then have
$N\circ (M\circ N)(\mu)=(N\circ M)\circ N(\mu) =N(\mu)$, and injectivity of $N$ implies
$(M\circ N)(\mu)=\mu$. Thus Theorem~\ref{thm:sub-inv}(c) follows from the following
proposition.

\begin{prop}\label{recovernu}
Suppose that $\nu$ is a measure on $\ESS^d$ satisfying the subinvariance
relation~\eqref{eq:ref sub-inv} and with total mass $\prod_{j=1}^k(\beta r_j)^{-1}$. Then
$\nu=\nu_{\mu_\nu}$.
\end{prop}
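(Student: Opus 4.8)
The plan is to verify $\nu_{\mu_\nu}=\nu$ by testing both measures against the characters $g_n(x)=e^{2\pi i x^Tn}$, $n\in\ZZ^d$. Since $\clsp\{g_n:n\in\ZZ^d\}=C(\ESS^d)$, and since $\nu$ is a finite measure while $\nu_{\mu_\nu}$ is the finite measure attached to $\mu_\nu\in M(\ESS^d)$ (which exists by Theorem~\ref{thm:sub-inv}(b), proved via Lemma~\ref{limitsexist}), it suffices to show $\int_{\ESS^d}g_n\,d\nu_{\mu_\nu}=\int_{\ESS^d}g_n\,d\nu$ for every $n$.

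First I would record how the two constructions act on a single character. Because $g_n(x+y)=e^{2\pi i y^Tn}g_n(x)$, for any $\mu\in M(\ESS^d)$ formula~\eqref{measure nu of mu} gives
\begin{equation*}
\int_{\ESS^d}g_n\,d\nu_\mu=\Big(\int_{[0,\infty)^k}e^{-w^T(\beta r-2\pi i\theta n)}\,dw\Big)\int_{\ESS^d}g_n\,d\mu=\Big(\prod_{j=1}^k\frac{1}{\beta r_j-2\pi i(\theta n)_j}\Big)\int_{\ESS^d}g_n\,d\mu,
\end{equation*}
where the iterated Laplace integral converges because $\Re(\beta r_j-2\pi i(\theta n)_j)=\beta r_j>0$. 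In the other direction, for $s\in(0,\infty)^k$ each factor $\id-e^{-\beta s_jr_j}R_{s_j\theta_j^T*}$ multiplies the functional $\nu'\mapsto\int_{\ESS^d}g_n\,d\nu'$ by $1-e^{-s_j(\beta r_j-2\pi i(\theta n)_j)}$, so
\begin{equation*}
\frac{1}{s_1\cdots s_k}\int_{\ESS^d}g_n\,d\Big({\textstyle\prod_{j=1}^k}(\id-e^{-\beta s_jr_j}R_{s_j\theta_j^T*})\Big)(\nu)=\Big(\prod_{j=1}^k\frac{1-e^{-s_j(\beta r_j-2\pi i(\theta n)_j)}}{s_j}\Big)\int_{\ESS^d}g_n\,d\nu.
\end{equation*}
Each factor on the right involves only its own $s_j$ and tends to $\beta r_j-2\pi i(\theta n)_j$ as $s_j\to0^+$, so the iterated limit in~\eqref{measure mu of nu} may be evaluated one variable at a time, giving $\int_{\ESS^d}g_n\,d\mu_\nu=\big(\prod_{j=1}^k(\beta r_j-2\pi i(\theta n)_j)\big)\int_{\ESS^d}g_n\,d\nu$.

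Combining the two displays,
\begin{equation*}
\int_{\ESS^d}g_n\,d\nu_{\mu_\nu}=\Big(\prod_{j=1}^k\frac{1}{\beta r_j-2\pi i(\theta n)_j}\Big)\int_{\ESS^d}g_n\,d\mu_\nu=\int_{\ESS^d}g_n\,d\nu
\end{equation*}
for every $n\in\ZZ^d$, which is what we need. There is no serious obstacle beyond the bookkeeping: in Fourier terms the maps $\mu\mapsto\nu_\mu$ and $\nu\mapsto\mu_\nu$ are diagonalised by the characters, with reciprocal eigenvalues $\prod_j(\beta r_j-2\pi i(\theta n)_j)^{\mp1}$, so their composite is the identity. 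The only points needing a word of care are the convergence of the Laplace integral (which is where $r\in(0,\infty)^k$ enters) and the fact, supplied by Lemma~\ref{limitsexist} and Theorem~\ref{thm:sub-inv}(b), that $\mu_\nu$ is a genuine finite positive measure -- this is what makes $\nu_{\mu_\nu}$ meaningful and what legitimises passing from the $g_n$ to all of $C(\ESS^d)$.
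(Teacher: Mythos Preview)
Your argument is correct and takes a genuinely different route from the paper's. The paper works directly with general test functions $f\in C^\infty(\ESS^d)$: it writes~\eqref{nu-numunu} and then ``peels off'' the variables one at a time, invoking Lemma~\ref{limit formula} to interchange the innermost limit $s_1\to0^+$ with the integral $\int_0^\infty\,dw_1$, and then a change of variable collapses one layer of the construction; repeating $k$ times recovers $\int f\,d\nu$. You instead test only against the characters $g_n$, which are simultaneous eigenfunctions for all the rotations $R_{y*}$; on the $n$-th character both $\mu\mapsto\nu_\mu$ and $\nu\mapsto\mu_\nu$ act by explicit scalars $\prod_j(\beta r_j-2\pi i(\theta n)_j)^{\mp1}$ that cancel, and density of trigonometric polynomials in $C(\ESS^d)$ finishes the argument. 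Your approach is shorter and makes the mechanism transparent---the two maps are diagonalised by Fourier analysis with reciprocal eigenvalues---and it dovetails with the moment computations in Remark~\ref{reconciliation} and the proof of Theorem~\ref{newbigthm}. The paper's analytic peeling argument is longer but does not rely on the group structure of $\ESS^d$, so it would adapt more readily to settings where the commuting maps lack a common family of eigenfunctions. (Neither argument actually uses the total-mass normalisation in the hypothesis; the identity $\nu=\nu_{\mu_\nu}$ holds for every subinvariant~$\nu$.)
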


Suppose that $\nu$ is a subinvariant measure and $f\in C^\infty(\ESS^d)$. We need to show
that the functional defined by integrating against $\nu_{\mu_\nu}$, which is defined in
parts (a) and (b) of the theorem as
\begin{align}\label{nu-numunu}
\int_{[0,\infty)^k}e^{- \beta w^Tr}\lim_{s\to 0^+}\frac{1}{s_k\cdots s_1}\int_{\ESS^d} f(x+\theta^T w)\, d \Big(\prod_{j=1 }^k\big(\id - e^{- \beta s_jr_j } R_{s_j\theta_j^T*}\big)\Big)(\nu)(x) dw,
\end{align}
is in fact implemented by $\nu$. We will do this by peeling off the iterated limit one
variable at a time. For this, the next lemma is crucial.

\begin{lemma}\label{limit formula}
Consider a positive measure $\lambda$ on $\ESS^d$, $b\in (0,\infty)$ and $v\in \ESS^d$. For
$f\in C^\infty(\ESS^d)$, we have
\begin{align*}
\int_0^\infty e^{-bt}\lim_{s\to 0+}\frac{1}{s}\int_{\ESS^d}&f(y+tv)\,d\big((\id-e^{-bs}R_{sv*})\lambda\big)(y)\,dt\\
&=\lim_{s\to 0+}\int_0^\infty \frac{e^{-bt}}{s}\int_{\ESS^d}f(y+tv)\,d\big((\id-e^{-bs}R_{sv*})\lambda\big))(y)\,dt.
\end{align*}\end{lemma}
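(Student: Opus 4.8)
The plan is to recognise the asserted identity as an interchange of $\lim_{s\to0^+}$ with the outer integral $\int_0^\infty\,dt$, and to justify that interchange by the dominated convergence theorem. Fix $f\in C^\infty(\ESS^d)$; for $t\in\RR$ write $g_t:=f\circ R_{tv}$, which again belongs to $C^\infty(\ESS^d)$ (translation by $tv$ is a diffeomorphism of $\ESS^d$) and satisfies $\|g_t\|_\infty=\|f\|_\infty$ and $\sup_x\|\nabla g_t(x)\|_2=\sup_x\|\nabla f(x)\|_2=:\|\nabla f\|$, since translating does not change a supremum over $\ESS^d$. Using the convention $\int h\,dR_{sv*}\lambda=\int(h\circ R_{sv})\,d\lambda$, the inner expression becomes
\[
\frac1s\int_{\ESS^d}g_t\,d\big((\id-e^{-bs}R_{sv*})\lambda\big)=\int_{\ESS^d}\frac1s\big(g_t(y)-e^{-bs}g_t(y+sv)\big)\,d\lambda(y),
\]
so, setting $\Phi_s(t):=e^{-bt}\int_{\ESS^d}\frac1s\big(g_t(y)-e^{-bs}g_t(y+sv)\big)\,d\lambda(y)$, the left-hand side of the lemma is $\int_0^\infty\lim_{s\to0^+}\Phi_s(t)\,dt$ and the right-hand side is $\lim_{s\to0^+}\int_0^\infty\Phi_s(t)\,dt$.

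Next I would establish a bound on $\Phi_s$ that is uniform in both $s>0$ and $t\ge0$. For fixed $y$ and $t$ put $h(\tau):=e^{-b\tau}g_t(y+\tau v)$; then $h'(\tau)=-b e^{-b\tau}g_t(y+\tau v)+e^{-b\tau}v^T\nabla g_t(y+\tau v)$, so Cauchy--Schwarz gives $|h'(\tau)|\le b\|f\|_\infty+\|v\|_2\|\nabla f\|=:C$, a constant independent of $y$, $t$, $\tau$. Since $\tfrac1s\big(g_t(y)-e^{-bs}g_t(y+sv)\big)=\tfrac1s(h(0)-h(s))=-\tfrac1s\int_0^s h'(\tau)\,d\tau$ has modulus at most $C$, we obtain $|\Phi_s(t)|\le C\|\lambda\|\,e^{-bt}$ for all $s>0$, $t\ge0$, and $t\mapsto C\|\lambda\|e^{-bt}$ lies in $L^1([0,\infty))$; in particular $\int_0^\infty\Phi_s(t)\,dt$ converges for each $s>0$. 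For the pointwise limit, fix $t$: as $s\to0^+$ we have $\tfrac1s(h(0)-h(s))\to-h'(0)=b g_t(y)-v^T\nabla g_t(y)$ for every $y$, dominated by the constant $C$, so dominated convergence for the finite positive measure $\lambda$ gives $\Phi_s(t)\to e^{-bt}\int_{\ESS^d}\big(b g_t-v^T\nabla g_t\big)\,d\lambda$; this is exactly Lemma~\ref{|L|=1} applied to the smooth function $g_t$ (whose positivity hypothesis plays no role in that computation). The limiting integrand is continuous in $t$, hence the left-hand side of the lemma makes sense.

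Finally, for the interchange itself, I would take any sequence $s_n\downarrow0$: the functions $\Phi_{s_n}$ converge pointwise on $[0,\infty)$ to $t\mapsto e^{-bt}\int(b g_t-v^T\nabla g_t)\,d\lambda$ and are dominated by the fixed function $t\mapsto C\|\lambda\|e^{-bt}\in L^1([0,\infty))$, so the dominated convergence theorem gives $\int_0^\infty\Phi_{s_n}(t)\,dt\to\int_0^\infty\lim_{s\to0^+}\Phi_s(t)\,dt$; since this limit is independent of the sequence, $\lim_{s\to0^+}\int_0^\infty\Phi_s(t)\,dt$ exists and equals $\int_0^\infty\lim_{s\to0^+}\Phi_s(t)\,dt$, which is the desired identity. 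The argument is essentially bookkeeping; the one point that requires care, and which I expect to be the main obstacle, is producing a dominating function for $\Phi_s$ that is independent of $s$ and integrable in $t$ — handled above by the mean-value bound on $\tau\mapsto e^{-b\tau}g_t(y+\tau v)$ together with the fact that translating $f$ changes neither $\|f\|_\infty$ nor $\sup_x\|\nabla f(x)\|_2$.
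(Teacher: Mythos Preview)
Your proof is correct and follows essentially the same approach as the paper: both identify the task as an interchange of $\lim_{s\to0^+}$ with $\int_0^\infty\,dt$, bound the difference quotient by a constant times $e^{-bt}$ via the mean value theorem (you package this as a single function $h(\tau)=e^{-b\tau}g_t(y+\tau v)$ and use $|h(0)-h(s)|\le s\sup|h'|$, while the paper splits off $\frac{1-e^{-bs}}{s}f(y+tv)$ first and bounds the two pieces separately), and then invoke dominated convergence along sequences $s_n\downarrow0$. Your version additionally spells out the pointwise convergence in $t$, which the paper leaves implicit.
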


\begin{proof}
For $s>0$, we have
\[
\frac{1}{s}\int_{\ESS^d}f(y+tv)\,d\big((\id-e^{-bs}R_{sv*})\lambda\big)(y)=\frac{1}{s}\int_{\ESS^d}\big(f(y+tv)-e^{-bs}f(y+tv+sv)\big)\,d\lambda(y).
\]
We write this last integrand as
\begin{align*}
K(y,&s,t)=s^{-1}\big(f(y+tv)-e^{-bs}f(y+tv+sv)\big)\\
&=s^{-1}\big(f(y+tv)-e^{-bs}f(y+tv)+e^{-bs}f(y+tv) -e^{-bs}f(y+tv+sv)\big)\\
&=\frac{1-e^{-bs}}{s}f(y+tv)+e^{-bs}\frac{f(y+tv) -f(y+tv+sv)}{s}.
\end{align*}
We estimate the first summand using the mean value theorem on $e^{-bs}$, and the second
summand using the same theorem on $f$, to find
\[
|K(y,s,t)|\leq b\|f\|_\infty+\|v^T(\nabla f)\|_\infty.
\]
Thus we have
\[
\Big|\frac{e^{-bt}}{s}\int_{\ESS^d} K(y,s,t)\,d\lambda(y)\Big|\leq e^{-bt}\big(
b\|f\|_\infty+\|v^T(\nabla f)\|_\infty\big)\|\lambda\|.
\]
Now the result follows from the dominated convergence theorem for Lebesgue measure on
$[0,\infty)$ (modulo the trick of observing that it suffices to work with sequences
$s_n\to 0+$ --- see the proof of \cite[ Theorem~2.27]{Fo}).\
\end{proof}

\begin{proof}[Proof of Proposition~\ref{recovernu}]
As in the proof of Theorem~\ref{thm:sub-inv}(b), Lemma~\ref{limitsexist} implies that
there is  a positive measure $\eta$ on $\ESS^d$ such that for $g\in C^\infty(\ESS^d)$, we
have
\[
\int_{\ESS^d}g\,d\eta=\lim_{s_k,\dots, s_2\rightarrow 0^+}\frac{1}{s_k\cdots s_2}\int_{\ESS^d} g\,d \Big(\prod_{j=2 }^k\big(\id - e^{-\beta s_j r_j } R_{s_j\theta^T_j*}\big)\Big)(\nu).
\]
Since  the operators $\id - e^{-\beta s_jr_j } R_{s_j\theta^T_j*}$ commute with each
other,
\begin{align}\label{density one}
\int_{\ESS^d} g\,
d \big(\id - &e^{-\beta s_1 r_1 } R_{s_1\theta_1^T*}\big)(\eta)\\
&=\notag\lim_{s_k,\dots, s_2\rightarrow 0^+}\frac{1}{s_k\cdots s_2}\int_{\ESS^d} g\,d \Big(\prod_{j=1 }^k\big(\id - e^{-\beta s_jr_j } R_{s_j\theta^T_j*}\big)\Big)(\nu).
\end{align}

Now we need some complicated notation to implement the peeling process. First of all, we
fix $f\in C^\infty(\ESS^d)$. In an attempt to avoid an overdose of subscripts, we write
$s=(s_1,\hat s)$, $w=(w_1,\hat w)$ and $r=(r_1,\hat r)$. We also write $\hat \theta$ for
the ${k-1}\times d$ matrix obtained from $\theta$ by deleting the first row: thus
$\theta^T$ has block form $(\theta_1^T\ \hat\theta^T)$. With the new notation,
\eqref{nu-numunu} becomes
\begin{align*}
\int_{[0,\infty)^{k-1}}&e^{- \beta\hat{w}^T \hat{r} }\int_0^\infty e^{-\beta r_1 w_1}\times\\
&\times\lim_{s_1\rightarrow 0^+}\frac{1}{s_1}\int_{\ESS^d}
f(x+\hat{\theta}^T \hat{w}+w_1\theta_1^T)\,
d \big(\id - e^{-\beta r_1 w_1} R_{s_1\theta_1^T*}\big)(\eta)(x)dw_1 d\hat{w}.
\end{align*}
Now we can apply Lemma~\ref{limit formula} to the inside integrals, which gives
\begin{align}\label{6 beta formula}
\int_{\ESS^d}f\,d\nu_{\mu_\nu}=\int_{[0,\infty)^{k-1}}&e^{- \beta \hat w^T \hat{r}}\lim_{s_1\rightarrow 0^+}\frac{1}{s_1}\int_0^\infty e^{-\beta r_1 w_1}\times\\
\times\int_{\ESS^d} &f(x+\hat{\theta}^T \hat{w}+w_1\theta_1^T)\,
d \big(\id - e^{- \beta r_1 w_1} R_{s_1\theta_1^T*}\big)(\eta)(x)dw_1 d\hat{w}.\notag
\end{align}

We now consider the function $g$ on $(0,\infty)$ defined by
\[
g(s_1):=\frac{1}{s_1}\int_0^\infty e^{-\beta r_1 w_1}\\
\int_{\ESS^d} f(x+\hat{\theta}^T \hat{w}+w_1\theta_1^T)\,
d \big(\id - e^{- \beta r_1 w_1} R_{s_1\theta_1^T*}\big)(\eta)(x)dw_1.
\]
We aim to prove that $g(s_1)\to\int_{\ESS^d}f(x+\hat{\theta}^T\hat{w})\,d\eta(x)$ as
$s_1\to 0^+$. To this end, we compute:
\begin{align*}
g(s_1)=\frac{1}{s_1}\int_0^\infty\!\!e^{- \beta r_1 w_1}&\!\!\int_{\ESS^d}
f\big(x+\hat{\theta}^T \hat{w}+w_1\theta_1^T\big)\,
d\eta(x)dw_1\\
-\frac{1}{s_1}&\int_0^\infty\!\! e^{- \beta r_1(w_1+s_1)}\!\!\int_{\ESS^d}
f\big(x+\hat{\theta}^T \hat{w}+(w_1+s_1)\theta_1^T\big)\,
d\eta(x)dw_1.
\end{align*}
Changing the variable in the second integral to get an integral over $[s_1,\infty)$
gives
\[
g(s_1)=\frac{1}{s_1}\int_{0}^{s_1}\!\!e^{- \beta r_1 w_1}\int_{\ESS^d}
f\big(x+\hat{\theta}^T \hat{w}+w_1\theta_1^T\big)\,d\eta (x)dw_1.
\]
Now we have
\begin{align*}
g(s_1)-&\int_{\ESS^d}f(x+\hat{\theta}^T \hat{w})\,d\eta(x)\\
&=\frac{1}{s_1}\int_{0}^{s_1}e^{-\beta r_1w_1}\!\!\!\int_{\ESS^d}
f(x+\hat{\theta}^T \hat{w}+w_1\theta^T_1)\,d\eta (x)dw_1-\int_{\ESS^d}f(x+\hat{\theta}^T \hat{w})\,d\eta(x)\\
&=\frac{1}{s_1}\int_{0}^{s_1}\int_{\ESS^d}
\big(e^{- \beta r_1w_1}f(x+\hat{\theta}^T \hat{w}+w_1\theta_1^T)-f(x+\hat{\theta}^T \hat{w})\big)\,d\eta (x)dw_1.
\end{align*}
Since $y\mapsto e^{- \beta y^T r}f(x+\hat{\theta}^T \hat{w}+\theta^Ty) $  is uniformly
continuous, there exists $\delta$ such that
\[
0\leq w_1<\delta \Longrightarrow \big|e^{-\beta r_1 w_1 }f(x+\hat{\theta}^T \hat{w}+w_1\theta_1^T)-f(x+\hat{\theta}^T \hat{w})\big|<\frac{\epsilon}{\|\eta\|}.
\]
So for $0\leq s_1<\delta$ we have
\[
\Big|g(s_1)-\int_{\ESS^d}f(x+\hat{\theta}^T \hat{w})\,d\eta(x)\Big|\leq \frac{1}{s_1}\int_0^{s_1}\int_{\ESS^d}\frac{\epsilon}{\|\eta\|}\,d\eta(x)dw_1=\epsilon.
\]
Thus $g(s_1)\to \int_{\ESS^d}f(x+\hat{\theta}^T \hat{w})\,d\eta(x)$, as we wanted.

Putting the formula for $\lim_{s_1\rightarrow 0^+}g(s_1)$ in~\eqref{6 beta formula} gives
\begin{align*}
\int_{\ESS^d}f\,d\nu_{\mu_\nu}&=\int_{[0,\infty)^{k-1}} e^{- \beta\hat{w}^T \hat{r}}\int_{\ESS^d} f(x+\hat{\theta}^T \hat{w})\, d\eta(x)\,d\hat{w},
\end{align*}
which is the right-hand side of~\eqref{nu-numunu} with one $\lim_{s\to 0^+}$ and one
$\int_0^\infty$ removed. Repeating the argument $k-1$ times gives
 \begin{align*}
\int_{\ESS^d}f\,d\nu_{\mu_\nu}&=\int_{\ESS^d} f\, d\nu,
\end{align*}
as required.
\end{proof}

As described before Proposition~\ref{recovernu}, this completes the proof of
Theorem~\ref{thm:sub-inv}.

\section{A parametrisation of the equilibrium states}

We are now ready to describe the KMS states of our system. At the end of the section, we
will use the following theorem to prove our main result.

\begin{thm}\label{bigthm}
Consider our standard set-up, and suppose that $\beta>0$.
\begin{enumerate}
\item\label{constructKMS} Suppose that $\mu\in P\big(\varprojlim(\ESS^d, E_m^T)\big)$.
    Define measures $\mu_m\in P(\ESS^d)$ by $\mu_m=E_{m,\infty*}^T(\mu)$ and take
    $\nu_{\mu_m}$ to be the subinvariant measure on $\ESS^d$ obtained by applying
    Theorem~\ref{thm:sub-inv} to the measure $\mu_m$. Then there is a KMS$_\beta$
    state $\psi_{\mu}$ of $(B_\infty,\alpha)$ such that
\begin{equation}\label{defpsimu}
\psi_\mu(V_{m,p} U_{m,n} V_{m,q}^*)=\delta_{p,q}e^{-\beta p^Tr^m}\prod_{j=1}^k(\beta r^m_j)\int_{\ESS^d} e^{2\pi i x^Tn}\, d\nu_{\mu_m}(x).
\end{equation}
\item\label{surjectivity} The map $\mu\mapsto \psi_\mu$ is an affine homeomorphism of
    $P\big(\varprojlim(\ESS^d, E_m^T)\big)$ onto KMS$_\beta (B_\infty, \alpha)$.
\end{enumerate}
\end{thm}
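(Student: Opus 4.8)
The plan is to reduce everything to the single-block theory of \S\ref{sec:NCT-new}, using the intertwining built into the direct system to glue. For part (a): for each $m$, apply Theorem~\ref{thm:sub-inv}(a) with $(\theta,r)=(\theta_m,r^m)$ to $\mu_m=E^T_{m,\infty*}(\mu)$, obtaining the nonnegative measure $\nu_{\mu_m}$ of total mass $\prod_{j=1}^k(\beta r^m_j)^{-1}$ solving~\eqref{eq:ref sub-inv}; its rescaling $\bar\nu_m:=(\prod_{j=1}^k\beta r^m_j)\nu_{\mu_m}$ is then a probability measure, still solving~\eqref{eq:ref sub-inv}, so $\kappa_m:=\prod_{j=1}^k(\id-e^{-\beta r^m_j}R_{\theta_{m,j}^T*})(\bar\nu_m)$ is nonnegative (the $s=(1,\dots,1)$ instance of~\eqref{eq:ref sub-inv}) of total mass $\prod_j(1-e^{-\beta r^m_j})$. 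Proposition~\ref{prp:KMS char}(b)--(c) applied to $\kappa_m$ produces a KMS$_\beta$ state $\psi_{\mu,m}$ of $(B_m,\alpha^{r^m})$ with $\psi_{\mu,m}(V_{m,p}U_{m,n}V_{m,q}^*)=\delta_{p,q}e^{-\beta p^Tr^m}\int_{\ESS^d}e^{2\pi ix^Tn}\,d\bar\nu_m(x)$, which is exactly~\eqref{defpsimu}. The crux is compatibility, $\psi_{\mu,m+1}\circ\pi_m=\psi_{\mu,m}$: evaluating both sides on $V_{m,p}U_{m,n}V_{m,q}^*$ and using $\pi_m(V_{m,p}U_{m,n}V_{m,q}^*)=V_{m+1,D_mp}U_{m+1,E_mn}V_{m+1,D_mq}^*$, this comes down to $\delta_{D_mp,D_mq}=\delta_{p,q}$, to $(D_mp)^Tr^{m+1}=p^Tr^m$ and $\prod_j(\beta r^{m+1}_j)=(\det D_m)^{-1}\prod_j(\beta r^m_j)$ (both from~\eqref{relater} and diagonality of $D_m$), and to the intertwining $E^T_{m*}(\bar\nu_{m+1})=\bar\nu_m$, which is Lemma~\ref{normalising} --- the one place where exactness of $D_m\theta_{m+1}E_m=\theta_m$ is needed, via $E^T_m\theta_{m+1}^T=\theta_m^TD_m^{-1}$. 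The universal property of the direct limit then yields a state $\psi_\mu$ on $B_\infty$ with $\psi_\mu\circ\pi_{m,\infty}=\psi_{\mu,m}$ and~\eqref{defpsimu}; it is KMS$_\beta$ because the analytic spanning elements $\pi_{m,\infty}(V_{m,p}U_{m,n}V_{m,q}^*)$, any two of which lie in a common $\pi_{m,\infty}(B_m)$ on which $\alpha=\pi_{m,\infty}\circ\alpha^{r^m}$, satisfy the KMS$_\beta$ relation since $\psi_{\mu,m}$ does.

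For part (b), affineness is immediate, since $\mu\mapsto\mu_m$, $\mu_m\mapsto\nu_{\mu_m}$ (linear by~\eqref{measure nu of mu}) and $\nu_{\mu_m}\mapsto\psi_{\mu,m}$ are affine and $\psi_\mu$ is determined by its values on the spanning elements. For weak* continuity I would invoke metrizability of $P(\varprojlim(\ESS^d,E^T_m))$ to reduce to sequences; for fixed $m,p,q,n$ the value $\psi_\mu(V_{m,p}U_{m,n}V_{m,q}^*)$ equals $\delta_{p,q}e^{-\beta p^Tr^m}\prod_j(\beta r^m_j)\int_{[0,\infty)^k}e^{-\beta w^Tr^m}\int_{\ESS^d}e^{2\pi i(x+\theta_m^Tw)^Tn}\,d\mu_m(x)\,dw$ by~\eqref{measure nu of mu}, and one passes limits inside using weak* continuity of $\mu\mapsto\mu_m$ and dominated convergence over $[0,\infty)^k$; uniform boundedness of states then gives continuity on all of $B_\infty$. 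Injectivity: $\psi_\mu=\psi_{\mu'}$ forces $\bar\nu_m=\bar\nu'_m$, hence $\nu_{\mu_m}=\nu_{\mu'_m}$, hence $\mu_m=\mu'_m$ by Theorem~\ref{thm:sub-inv}(c), hence $\mu=\mu'$ since $P(\varprojlim(\ESS^d,E^T_m))\cong\varprojlim P(\ESS^d)$.

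The substantial point is surjectivity. Given $\phi\in\mathrm{KMS}_\beta(B_\infty,\alpha)$, put $\phi_m:=\phi\circ\pi_{m,\infty}$, a KMS$_\beta$ state of $(B_m,\alpha^{r^m})$ with associated probability measure $\nu_m$ on $\ESS^d$. By Theorem~\ref{subinv}, $\nu_m$ solves~\eqref{eq:ref sub-inv} for $(\theta_m,r^m)$, hence so does $\tilde\nu_m:=\prod_j(\beta r^m_j)^{-1}\nu_m$, of total mass $\prod_j(\beta r^m_j)^{-1}$, so Theorem~\ref{thm:sub-inv}(c) supplies a unique $\mu^{(m)}\in M(\ESS^d)$ with $\nu_{\mu^{(m)}}=\tilde\nu_m$, necessarily in $P(\ESS^d)$ by comparing masses. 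Since $\phi_m=\phi_{m+1}\circ\pi_m$ and $\pi_m|_{C(\ESS^d)}=(E^T_m)^*$, we have $E^T_{m*}(\nu_{m+1})=\nu_m$, hence $E^T_{m*}(\tilde\nu_{m+1})=(\det D_m)\tilde\nu_m$; on the other hand the computation proving Lemma~\ref{normalising} gives, for every $\rho\in M(\ESS^d)$, $E^T_{m*}(\nu^{m+1}_\rho)=(\det D_m)\,\nu^m_{E^T_{m*}\rho}$, where $\nu^m_\rho$ denotes the Theorem~\ref{thm:sub-inv}(a) construction for $(\theta_m,r^m)$. Applying this with $\rho=\mu^{(m+1)}$ and comparing gives $\nu^m_{E^T_{m*}\mu^{(m+1)}}=\nu^m_{\mu^{(m)}}$, whence $E^T_{m*}\mu^{(m+1)}=\mu^{(m)}$ by the injectivity in Theorem~\ref{thm:sub-inv}(c). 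So $\{\mu^{(m)}\}$ is compatible and defines $\mu\in P(\varprojlim(\ESS^d,E^T_m))$ with $\mu_m=\mu^{(m)}$; then $\psi_\mu\circ\pi_{m,\infty}$ and $\phi_m$ are KMS$_\beta$ states of $(B_m,\alpha^{r^m})$ with the same associated measure $\nu_m$, so they agree on every $V_{m,p}U_{m,n}V_{m,q}^*$ by~\eqref{eq:KMS char}, and density of $\bigcup_m\pi_{m,\infty}(B_m)$ forces $\psi_\mu=\phi$. Finally $\mu\mapsto\psi_\mu$ is a continuous affine bijection of the weak*-compact space $P(\varprojlim(\ESS^d,E^T_m))$ onto the Hausdorff space KMS$_\beta(B_\infty,\alpha)$, hence an affine homeomorphism.

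I expect the surjectivity step --- specifically checking that the reconstructed measures $\mu^{(m)}$ intertwine under the $E^T_m$ --- to be the main obstacle, since it forces one to combine Theorem~\ref{subinv}, the uniqueness half of Theorem~\ref{thm:sub-inv}(c), and the $E^T_m$-intertwining of Lemma~\ref{normalising} (precisely where exactness of $D_m\theta_{m+1}E_m=\theta_m$ is used). A secondary, purely bookkeeping, hazard throughout is keeping the normalizing constants $\prod_j\beta r^m_j$, $\det D_m$ and $\prod_j(1-e^{-\beta r^m_j})^{-1}$ consistent at every step.
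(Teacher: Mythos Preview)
Your proposal is correct and follows essentially the same route as the paper: build block-level KMS states from $\bar\nu_m=c_m\nu_{\mu_m}$ via Proposition~\ref{prp:KMS char}, use Lemma~\ref{normalising} (and $c_m=d_mc_{m+1}$) for compatibility, and for surjectivity combine Theorem~\ref{subinv} with the bijection of Theorem~\ref{thm:sub-inv}(\ref{item-c-thm:sub-inv}) plus the Lemma~\ref{normalising} computation to intertwine the reconstructed $\mu^{(m)}$. The only cosmetic differences are that the paper quotes \cite[Proposition~3.1]{BHS} to pass to the direct limit where you argue directly on spanning elements, and packages the surjectivity intertwining as a pair of commutative diagrams rather than your explicit equation $E^T_{m*}(\nu^{m+1}_\rho)=d_m\,\nu^m_{E^T_{m*}\rho}$; your explicit construction of $\kappa_m$ and your continuity argument via~\eqref{measure nu of mu} and dominated convergence are in fact slightly more detailed than the paper's treatment.
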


To prove the theorem, we first build some maps between the spaces of subinvariant
measures. We will make use of Theorem~\ref{thm:sub-inv}, but the measures described there
are not all normalised. To ensure we are dealing with probability measures, we introduce
the numbers
\[
c_m:=\prod_{j=1}^k(\beta r^m_j) = \beta^k\big(\prod_{j=1}^kr^m_j\big)\qquad\text{ and }\qquad d_m:=\det D_m.
\]
Because $D_m$ is diagonal, Equation~\ref{relater} shows that the two sets of numbers are
related by $d_m c_{m+1}=c_m$.

In particular, the functions $f_{\beta,r^m}$ from Remark~\ref{connectAaHR} have
constant value $c_m$, and so $f_{\beta,r^{m+1}} = d_m^{-1}f_{\beta,r^m}$. Thus with
$\Sigma_{\beta,r}$ from Proposition~\ref{prp:KMS char}(\ref{it:isomorphism}), we can
define $\sigma_m:\Sigma_{\beta,r^{m+1}}\to \Sigma_{\beta,r^m}$ by
\[
\sigma_m(\nu) = d_m^{-1} E^T_{m*}(\nu).
\]

\begin{lemma}\label{normalising}
Suppose that $\mu\in P\big(\varprojlim(\ESS^d,E_m^T)\big)$, and define
$\mu_m:=E^T_{m,\infty*}(\mu)$ for $m\geq 1$. Then the measures $\nu_{\mu_m}$ given by
Theorem~\ref{thm:sub-inv} satisfy $\sigma_m(\nu_{\mu_{m+1}})=\nu_{\mu_m}$.
\end{lemma}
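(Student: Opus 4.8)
The plan is to prove the slightly stronger, un\-normalised identity $E^T_{m*}(\nu_{\mu_{m+1}})=d_m\,\nu_{\mu_m}$; since $\sigma_m(\nu)=d_m^{-1}E^T_{m*}(\nu)$ by definition, dividing by $d_m$ then gives $\sigma_m(\nu_{\mu_{m+1}})=\nu_{\mu_m}$. Both sides being finite positive measures on $\ESS^d$, it suffices to check equality after integrating against an arbitrary $f\in C(\ESS^d)$, and for this I would use the integral formula \eqref{measure nu of mu} of Theorem~\ref{thm:sub-inv}(a), applied with the data $(\theta_{m+1},\beta,r^{m+1})$ at level $m+1$ and with $(\theta_m,\beta,r^m)$ at level $m$.

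First I would write
\[
\int_{\ESS^d}f\,dE^T_{m*}(\nu_{\mu_{m+1}})=\int_{\ESS^d}(f\circ E^T_m)\,d\nu_{\mu_{m+1}}=\int_{[0,\infty)^k}e^{-\beta w^Tr^{m+1}}\int_{\ESS^d}f\big(E^T_mx+E^T_m\theta_{m+1}^Tw\big)\,d\mu_{m+1}(x)\,dw.
\]
The crucial step -- this is the one flagged in Remark~\ref{rmk:theta woes} -- is to use the relations \eqref{relatetheta} and \eqref{relater} to rewrite $E^T_m\theta_{m+1}^Tw=\theta_m^TD_m^{-1}w$ (take the transpose of $D_m\theta_{m+1}E_m=\theta_m$ and use that $D_m$ is diagonal, so $D_m^T=D_m$) and $w^Tr^{m+1}=w^TD_m^{-1}r^m$. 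Here it genuinely matters that the matrix identity holds exactly, not merely modulo $M_{k,d}(\ZZ)$: an integer error $Z$ in \eqref{relatetheta} would contribute an extra term $Z^TD_m^{-1}w$ inside the argument of $f$, and for generic $w\in[0,\infty)^k$ the vector $D_m^{-1}w$ has non-integer entries, so this term is not absorbed by the $\ZZ^d$-periodicity of $f$.

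Next I would substitute $w=D_mv$; since $D_m$ is diagonal with positive entries this maps $[0,\infty)^k$ onto itself with $dw=d_m\,dv$, and $(D_mv)^TD_m^{-1}r^m=v^Tr^m$. The integral becomes
\[
d_m\int_{[0,\infty)^k}e^{-\beta v^Tr^m}\int_{\ESS^d}f\big(E^T_mx+\theta_m^Tv\big)\,d\mu_{m+1}(x)\,dv.
\]
Finally I would identify the inner integral: for each fixed $v$ the function $y\mapsto f(y+\theta_m^Tv)$ is in $C(\ESS^d)$, so $\int_{\ESS^d}f(E^T_mx+\theta_m^Tv)\,d\mu_{m+1}(x)=\int_{\ESS^d}f(y+\theta_m^Tv)\,dE^T_{m*}(\mu_{m+1})(y)$, and by functoriality of pushforward together with $E^T_{m,\infty}=E^T_m\circ E^T_{m+1,\infty}$ we get $E^T_{m*}(\mu_{m+1})=E^T_{m*}E^T_{m+1,\infty*}(\mu)=E^T_{m,\infty*}(\mu)=\mu_m$. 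Substituting this back and comparing with \eqref{measure nu of mu} at level $m$ shows the whole expression equals $d_m\int_{\ESS^d}f\,d\nu_{\mu_m}$, which is what was to be proved.

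I do not expect a genuine obstacle: the content is a change of variables plus the functoriality of the maps $E^T_{m*}$ on measures. The only points requiring care are bookkeeping with transposes, the Jacobian $\det D_m=d_m$, and -- as emphasised above -- invoking \eqref{relatetheta} as an exact equality of matrices rather than an equality in $M_{k,d}(\ESS^d)$.
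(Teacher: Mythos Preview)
Your proof is correct and is essentially the same as the paper's: both expand $\int(f\circ E^T_m)\,d\nu_{\mu_{m+1}}$ via \eqref{measure nu of mu}, apply \eqref{relatetheta} and \eqref{relater} to rewrite the integrand, perform the diagonal substitution $v=D_m^{-1}w$ (producing the Jacobian $d_m$), and then use $E^T_{m*}(\mu_{m+1})=\mu_m$ on the inner integral. The only cosmetic difference is that the paper carries the factor $d_m^{-1}$ from $\sigma_m$ throughout, while you prove the un-normalised identity $E^T_{m*}(\nu_{\mu_{m+1}})=d_m\,\nu_{\mu_m}$ and divide at the end.
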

\begin{proof}
We take $f\in C(\ESS^d)$, and compute using~\eqref{measure nu of mu}:
\begin{align*}
\int_{\ESS^d} f&\,d\sigma_m(\nu_{\mu_{m+1}})
=d_m^{-1}\int_{\ESS^d} f\circ E_m^T\, d\nu_{\mu_{m+1}}\\
&=d_m^{-1}\int_{[0,\infty)^k}e^{-\beta w^Tr^{m+1}}\!\!\int_{\ESS^d} (f\circ E_m^T)(x+\theta_{m+1}^Tw)\,d\mu_{m+1}(x)\,dw\\
&=d_m^{-1}\int_{[0,\infty)^k}e^{-\beta w^Tr^{m+1}}\!\!\int_{\ESS^d}f(E_m^Tx+E_m^T\theta_{m+1}^Tw)\,d\mu_{m+1}(x)\,dw\\
&=d_m^{-1}\int_{[0,\infty)^k}e^{-\beta w^TD_m^{-1}r^m}\!\!\int_{\ESS^d} f(E_m^Tx+\theta_{m}^TD_m^{-1}w)\,d\mu_{m+1}(x)\,dw,
\end{align*}
where at the last step we used\footnote{This also uses that $D_m\theta_{m+1}E_m=\theta_m$
on the nail, i.e. as opposed to modulo $\ZZ$. Otherwise the difference would appear in the
last formula multiplied by the real variable $w$.} both~\eqref{relater} and
\eqref{relatetheta}. Now substituting $v=D_m^{-1}w$ in the outside integral gives
\begin{equation}\label{doubleint}
\int_{\ESS^d} f\,d\sigma_m(\nu_{\mu_{m+1}})=\int_{[0,\infty)^k}e^{-\beta v^Tr^{m+1}}\!\!\int_{\ESS^d} f(E_m^Tx+\theta_{m}^Tv)\,d\mu_{m+1}(x)\,dv.
\end{equation}
We write $s:=\theta_{m}^Tv$ and consider the translation automorphism $\tau_s$ of
$C(\ESS^d)$ defined by $\tau_s(f)(x)=f(x+s)$. Then the inside integral on the right of
\eqref{doubleint} is
\begin{align*}
\int_{\ESS^d} f(E_m^Tx+\theta_{m}^Tv)\,d\mu_{m+1}(x)&=\int_{\ESS^d}(\tau_s(f)\circ E_m^T)\,d\mu_{m+1}\\
&=\int_{\ESS^d}\tau_s(f)\,d(E_m^T)_*(\mu_{m+1})\\
&=\int_{\ESS^d}f(x+\theta_{m}^Tv)\,d\mu_m(x).
\end{align*}
Putting this back into the double integral in~\eqref{doubleint} gives the right-hand side
of~\eqref{measure nu of mu} for the measure $\mu_m$, and we deduce from~\eqref{measure nu
of mu} that
\[
\int_{\ESS^d} f\,d\sigma_m(\nu_{\mu_{m+1}})=\int_{\ESS^d}  f\,d\nu_{\mu_m}\quad\text{for all $f\in C(\ESS^d)$,}
\]
as required.
\end{proof}

\begin{proof}[Proof of Theorem~\ref{bigthm}\eqref{constructKMS}]
Since the maps
\[
E^T_{m,\infty}:\varprojlim\ESS^d\to \ESS^d
\]
are surjective, each $\mu_m$ is a probability measure on $\ESS^d$. Thus we deduce from
Theorem~\ref{thm:sub-inv} that $\nu_m:=\big(\prod_{j=1}^k(\beta r^m_j)\big)\nu_{\mu_m}$
is a probability measure satisfying the subinvariance relation~\eqref{eq:ref sub-inv}.
Thus Proposition~\ref{prp:KMS char}(\ref{it:phi-mu}) gives a KMS state $\psi_m$ of $(B_m,
\alpha^{r^m})$ such that $\psi_m(U_{m,n})=\int_{\ESS^d} e^{2\pi i x^T n}\,d\nu_m(x)$. We
now need to check that $\psi_{m+1}\circ\pi_m=\psi_m$ so that we can deduce from
\cite[Proposition~3.1]{BHS} that the $\psi_m$ combine to give a KMS$_\beta$ state of
$(B_\infty,\alpha) $.

Since we are viewing measures as functionals on $C(\ESS^d)$, the map $E_{m*}^T$ on
$M(\ESS^d)$ is induced by the continuous function $E_m^T : x\mapsto E_m^Tx$ on $\ESS^d$.
Then for $f\in C(\ESS^d)$
\begin{equation}\label{connmeasures}
\int_{\ESS^d} f\circ E_m^T(x)\,d\nu_{m+1}(x) = \int_{\ESS^d} f(x)\,dE_{m*}^T(\nu_{m+1})(x).
\end{equation}
For the functions $g_{n} \in C(\ESS^d)$ given by $g_{n}(x) = e^{2\pi in^T x}$ (so
that $\iota_m(g_{n}) = U_{m,n} \in B_{m}$), we have
\[
g_{n}\circ E_m^T(x)
    = e^{2\pi i (E_m^T x)^T n}
    = e^{2\pi i x^T E_m n}
    = g_{E_m n}(x).
\]
Substituting this on the left-hand side of~\eqref{connmeasures} gives
\begin{equation}\label{connUs}
\int_{\ESS^d} g_{E_m n}(x) \,d\nu_{m+1}(x) = \int_{\ESS^d} g_{n}(x) \,dE_{m*}^T(\nu_{m+1})(x).
\end{equation}
Using again $d_m = \det D_m$ and $c_m = \prod_{j=1}^k(\beta r^m_j)$ and
the relation $d_m c_{m+1} = c_m$, Lemma~\ref{normalising} gives
\begin{equation}\label{nus match}
\begin{split}
E^T_{m*}(\nu_{m+1})
     &= \sigma_m(d_m \nu_{m+1})
     = d_m c_{m+1} \sigma_m(\nu_{\mu_{m+1}})\\
     &= d_m c_{m+1} \nu_{\mu_m}
     = c_m \nu_{\mu_m} = \nu_m.
\end{split}
\end{equation}

Using~\eqref{connUs} at the third step, and~\eqref{nus match} at the fourth step, we now
calculate:
\begin{align*}
\psi_{m+1}(\pi_m(U_{n,m}))
    &=\psi_{m+1}(U_{m+1, E_m n})
    = \int_{\ESS^d} g_{E_mn} \,d\nu_{m+1}\\
    &=\int_{\ESS^d} g_{n}\,dE^T_{m*}(\nu_{m+1})
    =\int_{\ESS^d} g_{n}\,d\nu_m
    =\psi_m(U_{m,n}).
\end{align*}
Thus the states $\psi_m$ give an element $(\psi_m)$ of the inverse limit $\varprojlim
\text{KMS}_\beta(B_m,\alpha^{r^m})$, and surjectivity of the isomorphism in
\cite[Proposition~3.1]{BHS} gives a KMS$_\beta$ state $\psi_\mu$ of $(B_\infty,\alpha)$
such that $\psi_m=\psi_\mu\circ \pi_{m,\infty}$ for $m\geq 1$.
\end{proof}

\begin{remark}
We observe that the KMS$_\beta$ state of Theorem~\ref{bigthm}\eqref{constructKMS} is
given on $B_\infty=\clsp\{V_{m,p}U_{m,n}V_{m,q^*}\}$ by
\[
\psi_\mu(V_{m,p}U_{m,n}V_{m,q}^*)=\delta_{p,q}e^{-\beta p^Tr^m}\int_{\ESS^d} g_n\,d(\nu_{E^T_{m,\infty*}(\mu)}).
\]
\end{remark}

\begin{proof}[Proof of Theorem~\ref{bigthm}\eqref{surjectivity}]
We first prove that every KMS$_\beta$ state has the form $\psi_\mu$. So suppose that
$\phi$ is a KMS$_\beta$ state of $(B_\infty,\alpha)$. Then for each $m\geq 1$,
$\phi\circ\pi_{m,\infty}$ is a KMS$_\beta$ state of $(B_m,\alpha^{r^m})$, and hence there
are probability measures $\nu_m$ such that
\[
\phi\circ \pi_{m,\infty}(f)=\int_{\ESS^d} f\,d\nu_m\quad\text{for all $f\in C(\ESS^d)$}
\]
and $E^T_{m*}(\nu_{m+1})=\nu_m$ for all $m\geq 1$. Theorem~\ref{subinv} implies that each
$\nu_m$ satisfies the corresponding subinvariance relation. More specifically, we write
$M^{\sub}_m(\ESS^d)$  and $P^{\sub}_m(\ESS^d)$ for the set of measures and the set of probability measures satisfying~\eqref{eq:ctssubinv}. 
Then we have $\nu_m\in P^{\sub}_m(\ESS^d)$.

Once more using $d_m = \det D_m$ and $c_m = \prod_{j=1}^k(\beta r^m_j)$,
the construction of Theorem~\ref{thm:sub-inv}\eqref{mutonu} gives a function $\mu\mapsto
c_m\nu_{\mu}$ from $M(\ESS^d)$ to the simplex $M^{\sub}_m(\ESS^d)$. 
Lemma~\ref{normalising} gives commutative diagrams
\begin{equation*}\label{eq:CD1}
\begin{tikzpicture}[>=stealth, yscale=0.6]
\node (P2) at (4,3) {$M(\ESS^d)$};
\node (Pm+1) at (4,0) {$M_{m+1}^{\sub}(\ESS^d)$};
\node (P1) at (0,3) {$M(\ESS^d)$};
\node (Pm) at (0,0) {$M_m^{\sub}(\ESS^d)$};
\node (mum) at (-2,3) {$\mu_m$};
\node (num) at (-2,0) {$c_m\nu_{\mu_m}$};
\node (mum1) at (6.5,3) {$\mu_{m+1}$};
\node (num1) at (6.5,0) {$c_{m+1}\nu_{\mu_{m+1}}$,};
\draw[->] (Pm+1)--(Pm) node[above, midway] {$E^T_{m*}$};
\draw[->] (P2)--(P1) node[above, midway] {$E^T_{m*}$};
\draw[|->] (mum)--(num);
\draw[|->] (mum1)--(num1);
\draw[->] (P1)--(Pm);
\draw[->] (P2)--(Pm+1);
\end{tikzpicture}\end{equation*}
and Theorem~\ref{thm:sub-inv} implies that the vertical arrows are bijections. A simple set-theoretic argument then implies that we also
have commutative diagrams
\begin{equation*}\label{eq:CD2}
\begin{tikzpicture}[>=stealth, yscale=0.6]
\node (P2) at (4,3) {$P(\ESS^d)$};
\node (Pm+1) at (4,0) {$P_{m+1}^{\sub}(\ESS^d)$};
\node (P1) at (0,3) {$P(\ESS^d)$};
\node (Pm) at (0,0) {$P_m^{\sub}(\ESS^d)$};
\node (mum) at (-2,3) {$c_m^{-1}\mu_{\nu_m}$};
\node (num) at (-2,0) {$\nu_m$};
\node (mum1) at (6.5,3) {$c_{m+1}^{-1}\mu_{\nu_{m+1}}$};
\node (num1) at (6.5,0) {$\nu_{m+1}$.};
\draw[->] (Pm+1)--(Pm) node[above, midway] {$E^T_{m*}$};
\draw[->] (P2)--(P1) node[above, midway] {$E^T_{m*}$};
\draw[|->] (num)--(mum);
\draw[|->] (num1)--(mum1);
\draw[->] (Pm)--(P1);
\draw[->] (Pm+1)--(P2);
\end{tikzpicture}\end{equation*}

Thus the sequence $(\mu_m):=(c_m^{-1}\mu_{\nu_m})$ belongs to the inverse limit
$\varprojlim\big(P(\ESS^d),E^T_{m*}\big)$, and hence is given by a probability measure
$\mu\in P\big(\varprojlim(\ESS^d,E^T_m)\big)$.  We want to show that $\phi=\psi_\mu$.
Since both are states, it suffices to check that they agree on elements
$V_{m,p}U_{m,n}V_{m,q}^*$. Since $\phi$ is a KMS$_\beta$ state and the measure $\nu_m$
implements $\phi$ on $C(\ESS^d)=\clsp\{U_{m,n}\}$, we have
\[
\phi\big(V_{m,p}U_{m,n}V_{m,q}^*\big)=\delta_{p,q}e^{-\beta p^Tr^m}\int_{\ESS^d} e^{2\pi ix^Tn}\,d\nu_m(x).
\]
Since $\nu=\nu_{\mu_\nu}$ for all $\nu$ and $\mu_{\nu_m}=c_m\mu_m$, we have
$\nu_m=c_m\nu_{\mu_m}$ and
\[
\phi\big(V_{m,p}U_{m,n}V_{m,q}^*\big)=\delta_{p,q}e^{-\beta p^Tr^m}c_m\int_{\ESS^d} e^{2\pi i x^Tn}\,d\nu_{\mu_m}(x).
\]
This is precisely the formula for $\psi_\mu\big(V_{m,p}U_{m,n}V_{m,q}^*\big)$
in~\eqref{defpsimu}. Thus $\phi=\psi_\mu$.

Since each $\psi_\mu$ is a state, it follows from the formula~\eqref{defpsimu} that
$\mu\mapsto\psi_\mu$ is affine and weak* continuous from $M(\ESS^d)=C(\ESS^d)^*$ to the
state space of $B_\infty$. The formula~\eqref{defpsimu} also implies that $\mu\mapsto
\psi_\mu$ is injective, and since we have just shown that it is surjective, we deduce
that it is a homeomorphism of the compact space $P\big(\varprojlim(\ESS^d,E^T_n)\big)$
onto the simplex of KMS$_\beta$ states of $(B_\infty,\alpha)$.
\end{proof}

\begin{proof}[Proof of Theorem~\ref{newbigthm}]
According to~\eqref{defpsimu} in Theorem~\ref{bigthm}, we have to compute
\[
\int_{[0,\infty)^k} e^{2\pi ix^Tn}\,d\nu_{\mu_m}(x),
\]
which by Theorem~\ref{thm:sub-inv} is
\begin{align}\label{Laplace}
\int_{[0,\infty)^k}e^{-\beta w^Tr^m}&\int_{\ESS^d} e^{2\pi i(x+\theta_m^Tw)^Tn}\,d\mu_m(x)dw\\
&=\int_{[0,\infty)^k}e^{-\beta w^Tr^m}e^{2\pi iw^T\theta_m n}\int_{\ESS^d} e^{2\pi ix^Tn}\,d\mu_m(x)dw\notag\\
&=\int_{[0,\infty)^k}e^{-\beta w^Tr^m}e^{2\pi iw^T\theta_m n}M_{m,n}(\mu)\,dw.\notag
\end{align}
We can rewrite the integrand as
\[
e^{-\beta w^Tr^m}e^{2\pi iw^T\theta_m n}
=e^{\sum_{j=1}^kw_j(-\beta r^m_j+2\pi i(\theta_mn)_j)}=\prod_{j=1}^k e^{w_j(-\beta r^m_j+2\pi i(\theta_mn)_j)}.
\]
When we view $\int_{[0,\infty)^k}\,dw$ as an iterated integral, we find that
\[
\eqref{Laplace}=\prod_{j=1}^k\Big(\int_0^\infty e^{w_j(-\beta r^m_j+2\pi i(\theta_mn)_j)}M_{m,n}(\mu)\,dw_j\Big).
\]
Since $\beta>0$ and each $r^m_j>0$, we have
\[
\big|e^{w_j(-\beta r^m_j+2\pi i(\theta_mn)_j)}\big|=e^{w_j(-\beta r^m_j)}\to 0\quad\text{as $w_j\to \infty$}.
\]
Thus
\begin{align*}
\eqref{Laplace}=\prod_{j=1}^k\frac{e^{w_j(-\beta r^m_j+2\pi i(\theta_mn)_j)}}{-\beta r^m_j+2\pi i(\theta_mn)_j}M_{m,n}(\mu)\bigg|^\infty_0
&=\prod_{j=1}^k\frac{1}{\beta r^m_j-2\pi i(\theta_mn)_j}M_{m,n}(\mu),
\end{align*}
and the result follows from~\eqref{defpsimu}.
\end{proof}

\end{document}